\pgfplotsset{compat=1.10}
\numberwithin{equation}{section} 
\newtheorem{theorem}{Theorem}[section]
\newtheorem{corollary}[theorem]{Corollary}
\newtheorem{lemma}[theorem]{Lemma}
\newtheorem{proposition}[theorem]{Proposition}
\theoremstyle{definition} 
\newtheorem{definition}[theorem]{Definition}
\newcommand{\R}{\mathbb{R}}	
\newcommand{\N}{\mathbb{N}} 
\newcommand{\dx}{\,\mathrm{d}x}	
\newcommand{\dy}{\,\mathrm{d}y}	
\newcommand{\ds}{\,\mathrm{d}S}	
\renewcommand{\d}{\,\mathrm{d}}
\newcommand{\weak}{\rightharpoonup}
\newcommand{\nnu}{\bm{\nu}}  
\newcommand{\norm}[1]{\left\lVert #1 \right\lVert}
\newcommand{\abs}[1]{\left| #1 \right|}
\newcommand{\sub}{\subseteq}
\newcommand{\tu}[1]{\textup{#1}}
\DeclareMathOperator{\dive}{\mathrm{div}}
\newenvironment{bvp}{\left\{\begin{aligned}  }{\end{aligned}\right.}
\author[R.~Ognibene]{Roberto Ognibene}
\address[R.~Ognibene]{%
		Università degli Studi di Milano-Bicocca - Dipartimento di Matematica e Applicazioni}
\email[R.~Ognibene]{roberto.ognibene@unimib.it}
\title{\mbox{On asymptotics of Robin eigenvalues in the Dirichlet limit}}
\begin{document}

\begin{abstract}
	We investigate the asymptotic behavior of the eigenvalues of the Laplacian with homogeneous Robin boundary conditions, when the (positive) Robin parameter is diverging. In this framework, since the convergence of the Robin eigenvalues to the Dirichlet ones is known, we address the question of quantifying the rate of such convergence. More precisely, in this work we identify the proper geometric quantity representing (asymptotically) the first term in the expansion of the eigenvalue variation: it is a novel notion of torsional rigidity. Then, by performing a suitable asymptotic analysis of both such quantity and its minimizer, we prove the first-order expansion of any Robin eigenvalue, in the Dirichlet limit. Moreover, the convergence rate of the corresponding eigenfunctions is obtained as well. We remark that all our spectral estimates are explicit and sharp, and cover both the cases of convergence to simple and multiple Dirichlet eigenvalues.
	
\end{abstract}
	
\maketitle

\thispagestyle{empty}

\section{Introduction and main results}

\subsection{Introduction}

Let $d\geq 2$ and let $\Omega\sub\R^d$ be a bounded, open, Lipschitz set. First of all, we introduce the main characters of the present paper, i.e. the Robin eigenvalues. For any $\alpha>0$, we consider the eigenvalue problem for the Laplace operator with homogeneous Robin boundary conditions, that is
\begin{equation}\label{eq:robin_strong}
	\begin{bvp}
		-\Delta\varphi&=\lambda\varphi, &&\text{in }\Omega, \\
		\partial_{\nnu}\varphi+\alpha\varphi&=0, &&\text{on }\partial\Omega,
	\end{bvp}
\end{equation}
with $\nnu$ denoting the unit outer normal vector of $\partial\Omega$, whose weak formulation is the following:
\begin{equation*}
	\left\{\begin{aligned}&\varphi\in H^1(\Omega)\setminus\{0\} \quad\text{and}\quad \lambda\in\R\quad\text{such that} \\
		&\int_{\Omega}\nabla\varphi\cdot\nabla v\dx+\alpha\int_{\partial\Omega}\varphi v\ds=\lambda\int_\Omega \varphi v\dx\quad\text{for all }v\in H^1(\Omega).
	\end{aligned}\right.
\end{equation*}
By classical spectral theory, it is well known that \eqref{eq:robin_strong} admits a diverging sequence of positive eigenvalues, which we denote by
\begin{equation*}
	0<\lambda_1^\alpha\leq \lambda_2^\alpha\leq \dots\leq \lambda_n^\alpha\leq \dots,
\end{equation*}
where we assume that any eigenvalue is repeated according to its multiplicity. 

The Robin eigenvalues, in the range $\alpha>0$, represent a sort of intermediate quantity between the Neumann and the Dirichlet eigenvalues: indeed, if we denote, respectively, by $\{\mu_n\}_{n\in\N}$ and by $\{\lambda_n\}_{n\in\N}$ the eigenvalues of the Neumann and Dirichlet Laplacian (on the same domain $\Omega$, still in increasing order and repeated according to their multiplicities), by the min-max characterization
\begin{equation*}
	\lambda_n^\alpha=\min_{\substack{F\sub H^1(\Omega) \\ \dim F=n}}\max_{\substack{u\in F \\ u\neq 0}}\,\frac{\displaystyle \int_\Omega |\nabla u|^2\dx+\alpha\int_{\partial\Omega}u^2\ds}{\displaystyle \int_\Omega u^2\dx}
\end{equation*}
one can easily see that
\begin{equation*}
	\mu_n\leq \lambda_n^\alpha\leq\lambda_n\quad\text{for all }\alpha>0~\text{and all }n\in\N.
\end{equation*}
Moreover, for any $n\in\N$, again from the variational characterization, it is clear that the function $\alpha\mapsto\lambda_n^\alpha$ is monotone non-decreasing and concave upper semicontinuous (being the infimum of linear functions of the variable $\alpha$). 
At this point, by standard arguments, one can easily prove that
\begin{equation*}
	\lim_{\alpha\to 0}\lambda_n^\alpha=\mu_n 
\end{equation*}
and that
\begin{equation}\label{eq:convergence_eigenvalues}
	\lim_{\alpha\to+\infty}\lambda_n^\alpha=\lambda_n,
\end{equation}
for all $n\in\N$. Our main aim, in the present paper, consists in quantifying the rate of convergence in the latter case (we refer to \cite[Section 4.1]{BS} for the analysis in the former case). We remark that the Robin problem is very rich of properties and there are still lots of open questions in the field: we refer to \cite{BFK_robin} for a complete picture of the state of the art. In particular, we here partially answer to \cite[Open problem 4.7]{BFK_robin}.

The problem we are considering, despite being a basic question in the field, seems not to be treated, in the literature, as much as other aspects (for instance, the case $\alpha\to-\infty$). As far as the author's knowledge is concerned, the main contributions in the case $\alpha\to+\infty$ are due to A. V. Filinovskiy, dated 2011-2017, see \cite{Filinovskiy2014,Filinovskiy2014_a,Filinovskiy2014_b,Filinovskiy2015,Filinovskiy2015_a,Filinovskiy2017}, and to \cite{BBBT}. Let us first describe the $1$-dimensional case, which is essentially explicit and well understood. Following \cite[Section 4.3.1]{BFK_robin} and assuming $\Omega=(0,1)$, we have that problem \eqref{eq:robin_strong} translates into
\begin{equation*}
	\begin{cases}
		-\varphi''=\lambda \varphi, &\text{in }(0,1), \\
		-\varphi'(0)+\alpha \varphi(0)=0, &\\
		\varphi'(1)+\alpha \varphi(1)=0, &
	\end{cases}
\end{equation*}
which can be solved through the equation
\begin{equation*}
	\alpha^2+2\alpha\sqrt{\lambda}\cot\sqrt{\lambda}-\lambda=0.
\end{equation*}
From this, one can deduce the asymptotic behavior of the Robin eigenvalues, that is
\begin{equation}\label{eq:1d}
	\lambda_n^\alpha=n^2\pi^2-\frac{4n^2\pi^2}{\alpha}+\frac{12n^2\pi^2}{\alpha^2}+o\left(\frac{1}{\alpha^2}\right),\quad\text{as }\alpha\to+\infty,
\end{equation}
see \cite[Section 4.3.1]{BFK_robin}. We point out that, since $\lambda_n=n^2\pi^2$ and $\varphi_n(x)=\sqrt{2}\sin(n\pi x)$ is the corresponding $L^2(0,1)$-normalized Dirichlet eigenfunction, we have that
\begin{equation*}
	(\varphi_n'(0))^2+(\varphi_n'(1))^2=4n^2\pi^2,
\end{equation*}
which coincides with the first term in the asymptotic expansion \eqref{eq:1d}. In higher dimensions $d\geq 2$, the sharp asymptotic behavior of $\lambda_n^\alpha$ is known in the case in which $\Omega$ is of class $C^3$ and the corresponding Dirichlet eigenvalue $\lambda_n$ is simple or when $\Omega$ is $C^\infty$ in the general case of possibly multiple limit eigenvalues. More precisely, in \cite{Filinovskiy2017} the author proves that, if $\partial\Omega$ is of class $C^3$ and $\lambda_n$ is simple (and $\varphi_n$ denotes the unique, up to a sign, corresponding $L^2(\Omega)$-normalized eigenfunction), then
\begin{equation}\label{eq:fili}
	\lambda_n^\alpha=\lambda_n-\frac{1}{\alpha}\int_{\partial\Omega}(\partial_{\nnu}\varphi_n)^2\ds+o\left(\frac{1}{\alpha}\right),\quad\text{as }\alpha\to+\infty.
\end{equation}
Moreover, in the same work, as crucial part of the proof of \eqref{eq:fili}, the author provides an estimate on the convergence of eigenfunctions, that is
\begin{equation}\label{eq:fili_2}
	\norm{\varphi_n^\alpha-\varphi_n}_{H^2(\Omega)}\leq \frac{M_n}{\alpha}\quad\text{for all }\alpha>\alpha_n,
\end{equation}
for some $M_n>0$ and $\alpha_n>0$, depending on $d$, $\Omega$ and $n$, where $\varphi_n^\alpha$ is the unique eigenfunction of $\lambda_n^\alpha$ satisfying
\begin{equation*}
	\int_\Omega|\varphi_n^\alpha|^2\dx=1,\quad	\text{and}\quad\int_\Omega\varphi_n^\alpha\varphi_n\dx>0\quad\text{for all }\alpha>\alpha_n.
\end{equation*}
Indeed, since $\lambda_n$ is simple, by continuity, $\lambda_n^\alpha$ is simple as well, for $\alpha$ sufficiently large.
The idea behind the proof of \eqref{eq:fili} in \cite{Filinovskiy2017} is quite simple and deserves to be recalled. First of all, we observe that by De l'Hôpital rule there holds
\begin{equation}\label{eq:hopital}
	\lim_{\alpha\to+\infty}\frac{\lambda_n-\lambda_n^\alpha}{\frac{1}{\alpha}}=\lim_{\alpha\to+\infty}\frac{\frac{\d}{\d \alpha}\lambda_n^\alpha}{\frac{1}{\alpha^2}},
\end{equation}
provided the eigenvalue is differentiable with respect to $\alpha$. Hence, the first step is to compute the derivative of the eigenvalue $\lambda_n^\alpha$ with respect to the parameter $\alpha$. In this direction, once the continuity of eigenvalues and eigenfunctions with respect to $\alpha$ is obtained, by explicit computations the author proves that
\begin{equation}\label{eq:fili_4}
	\frac{\d}{\d\alpha}\lambda_n^\alpha=\int_{\partial\Omega}|\varphi_n^\alpha|^2\ds.
\end{equation}
Now, in view of the Robin boundary conditions satisfied by $\varphi_n^\alpha$, we have
\begin{equation*}
	\frac{\d}{\d\alpha}\lambda_n^\alpha=\frac{1}{\alpha^2}\int_{\partial\Omega}(\partial_{\nnu}\varphi_n^\alpha)^2\ds,
\end{equation*}
which allows to rewrite \eqref{eq:hopital} as
\begin{equation}\label{eq:fili_3}
	\lim_{\alpha\to+\infty}\frac{\lambda_n-\lambda_n^\alpha}{\frac{1}{\alpha}}=\lim_{\alpha\to+\infty}\int_{\partial\Omega}(\partial_{\nnu}\varphi_n^\alpha)^2\ds.
\end{equation}
At this point, the main effort in \cite{Filinovskiy2017} is devoted to the analysis of the behavior of the eigenfunction $\varphi_n^\alpha$ as $\alpha\to+\infty$. In particular, by making a careful use of elliptic regularity estimates, the author proves \eqref{eq:fili_2}, which, in turn implies continuity in $L^2(\partial\Omega)$ of $\partial_{\nnu}\varphi_n^\alpha$ with respect to $\alpha$. Combining this fact with \eqref{eq:fili_3}, one obtains \eqref{eq:fili}.

Nevertheless, it seems arduous to expect that the methods in \cite{Filinovskiy2017} could be extended to the case in which the limit eigenvalue $\lambda_n$ is multiple. Indeed, an ubiquitous (and well known) phenomenon in spectral stability is the lack of differentiability of eigenvalues in case of multiplicity. In particular, the presence of different branches of eigenvalues crossing at the same point produces conical points in the graph $\{(\alpha,\lambda_n^\alpha)\colon \alpha>0\}$. From another point of view, if $\lambda_n^\alpha$ is not simple, then there exists multiple orthogonal corresponding eigenfunctions and this prevents a formula like \eqref{eq:fili_4} (crucial step in \cite{Filinovskiy2017}) to hold true.

The other main contribution in the field is \cite{BBBT}, where the authors tackle the problem of quantifying the Robin-to-Dirichlet spectral stability through a functional analytic approach, in turn based on Kato's methods (quoting \cite[proof of Theorem 5.3]{BBBT}, we refer to \cite[proof of Theorem VIII.2.9]{kato}). This allows them to derive a second-order asymptotic expansion of any Robin eigenvalue as $\alpha\to+\infty$, which also covers the case of possibly multiple limit eigenvalues, albeit under stronger regularity assumptions on the domain. More precisely, in \cite[Theorem 5.3]{BBBT}, the authors prove that if $\Omega$ is of class $C^\infty$ and $n \in \mathbb{N}$ is such that
	\begin{equation*}
		\lambda_{n-1} < \lambda_n = \cdots = \lambda_{n+m-1} < \lambda_{n+m}
	\end{equation*}
	for some $m \geq 1$, then the following expansion holds
	\begin{equation}\label{eq:BBBT}
		\lambda_{n+i-1}^\alpha = \lambda_n - \frac{1}{\alpha} \,\gamma_1^i + \frac{1}{\alpha^2} \,\gamma_2^i + o\left(\frac{1}{\alpha^2}\right), \quad\text{as } \alpha \to +\infty,
	\end{equation}
	for $i = 1, \dots, m$. Regarding the coefficients appearing in \eqref{eq:BBBT}, we have that:
	\begin{itemize}
		\item $\{\gamma_1^i\}_{i=1,\dots,m}$ denotes the set of eigenvalues of the matrix with entries
		\begin{equation*}
			\left(\int_{\partial\Omega}\partial_{\nnu} \varphi_{n+k-1}\,\partial_{\nnu}\varphi_{n+\ell-1}\ds\right)_{k,\ell=1,\dots,m},
		\end{equation*}
		where $\{\varphi_{n+k-1}\}_{k=1,\dots,m}$ is an $L^2(\Omega)$-orthonormal basis associated with the $m$-multiple eigenvalue $\lambda_n$.
		\item The set $\{\gamma_2^i\}_{i=1,\dots,m}$ is again obtained as a family of eigenvalues of a certain matrix, whose explicit expression can be found in \cite[Theorem 5.3]{BBBT}.
	\end{itemize}

In the present paper, we develop a completely new approach, purely variational and based on techniques coming from singular perturbation theory, which allows to treat at the same time the simple and multiple case. Namely, we establish a variational perturbation theory for this framework, by viewing the Robin eigenvalue problem (for large values of the parameter) as a singular perturbation of the Dirichlet one. A drawback of the approaches adopted in \cite{Filinovskiy2017} and \cite{BBBT} is the requirement of strong regularity assumptions on the domain $\Omega$ (assumed to be of class $C^3$ and $C^\infty$, respectively). In particular, this prevents the application of the results to simple non-smooth domains, such as rectangles and parallelepipeds, which are among the easiest and most considered examples due to the fact that the (Dirichlet!) eigenelements are explicitly known. In this direction,  a remarkable feature of our approach is that it allows to work with \emph{rough} domains. In fact, we prove the first order asymptotic expansion of any Robin eigenvalue under the sole Lipschitz regularity assumption on the domain $\Omega$, which guarantees the following basic properties:
\begin{itemize}
	\item the outer unit normal vector $\nnu$ to be defined a.e. in the sense of surface measure;
	\item compactness of the trace embedding $H^1(\Omega)\hookrightarrow L^2(\partial\Omega)$;
	\item the validity of an integration-by-parts formula;
	\item well-posedness of the coefficients in the asymptotic expansion of the eigenvalue variation, that is $\partial_{\nnu}\varphi\in L^2(\partial\Omega)$ for all Dirichlet eigenfunctions $\varphi \in H^1_0(\Omega)$.
\end{itemize}
We also refer to \Cref{subsec:disclaimer} and \Cref{sec:appendix} for further details. 
 
\subsection{Main results}

Let us fix some notation. We recall that, in the limit $\alpha\to+\infty$, we recover the Dirichlet eigenvalue problem for the Laplacian, i.e.
\begin{equation}\label{eq:dir_strong}
	\begin{bvp}
		-\Delta\varphi&=\lambda\varphi, &&\text{in }\Omega, \\
		\varphi&=0, &&\text{on }\partial\Omega,
	\end{bvp}
\end{equation}
whose weak formulation is
\begin{equation*}
\left\{\begin{aligned}
		&\varphi\in H^1_0(\Omega)\setminus\{0\}\quad\text{and}\quad\lambda\in\R\quad\text{such that} \\
		&\int_{\Omega}\nabla\varphi\cdot\nabla v\dx=\lambda\int_\Omega \varphi v\dx\quad\text{for all }v\in H^1_0(\Omega).
\end{aligned}\right.
\end{equation*}
For any $\lambda\in\R$, we denote by
\begin{equation}\label{eq:eigenspace}
	E(\lambda):=\{\varphi\in H^1_0(\Omega)\colon ~\varphi~\text{weakly solves \eqref{eq:dir_strong}}\}.
\end{equation}
the eigenspace corresponding to $\lambda$. We recall that $\dim E(\lambda)$ is called the \emph{multiplicity} of $\lambda$.
In view of the regularity results which we recalled in \Cref{subsec:reg_eigen}, we now state the following crucial property:
\begin{equation}\label{ass:Omega}
	\text{$\partial_{\nnu}\varphi\in L^2(\partial\Omega)$ for any eigenfunction $\varphi\in H^1_0(\Omega)$ of \eqref{eq:dir_strong}.}\tag{I}
\end{equation}
Roughly speaking, the reason behind is that the restriction-to-the-boundary operator induces a unique, well defined, linear, surjective, continuous trace map (for $\epsilon>0$)
\begin{equation*}
	\mathrm{Tr}\,\colon \{u\in H^{1/2}(\Omega)\colon \Delta u\in H^{-3/2+\epsilon}(\Omega) \}\to L^2(\partial\Omega),
\end{equation*}
which can be applied to $\partial_{x_i}\varphi$ for any $i=1,\dots,d$. We refer to \Cref{subsec:reg_eigen} and, in general, to \cite[Section 3]{BGM2022} for the details.

In view of \eqref{ass:Omega}, the bilinear form
\begin{equation}\label{eq:intr_bilinear}
	\begin{alignedat}{2}
	E(\lambda)&\times E(\lambda)&&\longrightarrow \R, \\
	(&\varphi,\psi)&&\longmapsto  \int_{\partial\Omega}\partial_{\nnu}\varphi\,\partial_{\nnu}\psi\ds
\end{alignedat}
\end{equation}
is well defined and is actually a scalar product (see the proof of \Cref{lemma:equiv_norms}). We denote by $\{\lambda_n\}_{n\in\N}$ the sequence of Dirichlet eigenvalues, in increasing order and repeated according to their multiplicities. We now fix, for the rest of the paper, a particular eigenbasis of $L^2(\Omega)$. More precisely, if $n\in\N$ is such that 
\begin{equation*}
	\lambda_{n-1}<\lambda_n=\dots=\lambda_{n+m-1}<\lambda_{n+m}
\end{equation*}
with $m:=\dim E(\lambda_n)$, we assume that
\begin{equation}\label{eq:hp_eigen}
	\begin{aligned}
		&\varphi_{n+i-1}\in E(\lambda_n)\quad\text{for all }i=1,\dots,m, &&\\
		&\int_\Omega\varphi_{n+i-1}\varphi_{n+j-1}\dx=\delta_{ij}\quad\text{for all }i,j=1,\dots,m&& \\
		& \begin{cases}
			&\displaystyle \int_{\partial\Omega}\partial_{\nnu}\varphi_{n+i-1}\,\partial_{\nnu}\varphi_{n+j-1}\ds=\delta_{ij}\int_{\partial\Omega}(\partial_{\nnu}\varphi_{n+i-1})^2\ds \\
			&\displaystyle \text{for all }i,j=1,\dots,m
		\end{cases} \\
		&\begin{cases}
			&\displaystyle	\int_{\partial\Omega}(\partial_{\nnu}\varphi_{n+i-1})^2\ds\geq \int_{\partial\Omega}(\partial_{\nnu}\varphi_{n+j-1})^2\ds \\
			&	\displaystyle\text{for all }i,j=1,\dots,m,~i\leq j.
		\end{cases}
	\end{aligned}
\end{equation}
We emphasize that the third condition amounts to ask (without loss of generality) that the scalar product \eqref{eq:intr_bilinear}, defined on $E(\lambda_n)$, is diagonal, so that its eigenvalues coincide with
\begin{equation*}
	\int_{\partial\Omega}(\partial_{\nnu}\varphi_{n+i-1})^2\ds\quad\text{for all }i=1,\dots,m.
\end{equation*}
We are now ready to state our main result, which provide the sharp asymptotic behavior of any Robin eigenvalue when $\alpha\to+\infty$.
\begin{theorem}\label{thm:main}
	Let $n\in\N$ be such that
	\begin{equation*}
		\lambda_{n-1}<\lambda_n=\dots=\lambda_{n+m-1}<\lambda_{n+m}
	\end{equation*}
	with $m:=\dim E(\lambda_n)$, and let $\{\varphi_{n+i-1}\}_{i=1,\dots,m}$ be chosen in such a way that \eqref{eq:hp_eigen} holds. Then, there holds
	\begin{equation}\label{eq:main_th1}
		\lambda_{n+i-1}^\alpha=\lambda_n-\frac{1}{\alpha }\int_{\partial\Omega}(\partial_{\nnu}\varphi_{n+i-1})^2\ds+o\left(\frac{1}{\alpha}\right)\quad\text{as }\alpha\to+\infty,
	\end{equation}
	for any $i=1,\dots,m$. Moreover, 
	\begin{itemize}
		\item 	if $\Omega$ is of class $C^{1,1}$, then
		\begin{equation*}
			\lambda_{n+i-1}^\alpha=\lambda_n-\frac{1}{\alpha }\int_{\partial\Omega}(\partial_{\nnu}\varphi_{n+i-1})^2\ds+O\left(\frac{1}{\alpha^{4/3}}\right)\quad\text{as }\alpha\to+\infty,
		\end{equation*} 
		for any $i=1,\dots,m$;
		\item 	if $\Omega$ is of class $C^{2,1}$, then
		\begin{equation*}
			\lambda_{n+i-1}^\alpha=\lambda_n-\frac{1}{\alpha }\int_{\partial\Omega}(\partial_{\nnu}\varphi_{n+i-1})^2\ds+O\left(\frac{1}{\alpha^2}\right)\quad\text{as }\alpha\to+\infty,
		\end{equation*} 
		for any $i=1,\dots,m$.
	\end{itemize}
	In all the previous expansions, the reminder term does not depend on $i$.
\end{theorem}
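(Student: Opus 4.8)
The plan is to treat the Robin problem, for $\alpha$ large, as a singular perturbation of the Dirichlet one and to encode the leading correction of the eigenvalue in an auxiliary variational object attached to the Dirichlet eigenspace $E(\lambda_n)$. Fix $n$ and $m=\dim E(\lambda_n)$, and for $v\in E(\lambda_n)$ introduce the \emph{Robin torsional rigidity}
\[
\mathcal{T}_\alpha(v):=-\min_{\eta\in H^1(\Omega)}\Bigl(\int_\Omega|\nabla\eta|^2\dx+\alpha\int_{\partial\Omega}\eta^2\ds+2\int_{\partial\Omega}(\partial_{\nnu}v)\,\eta\ds\Bigr),
\]
which under \eqref{ass:Omega} is finite and nonnegative (the quadratic part is coercive on $H^1(\Omega)$ for $\alpha>0$). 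Its minimizer $\eta^v_\alpha$ is the harmonic function with $\partial_{\nnu}\eta^v_\alpha+\alpha\eta^v_\alpha=-\partial_{\nnu}v$ on $\partial\Omega$, and testing the Euler--Lagrange identity with $\eta^v_\alpha$ gives $\mathcal{T}_\alpha(v)=\int_\Omega|\nabla\eta^v_\alpha|^2\dx+\alpha\int_{\partial\Omega}(\eta^v_\alpha)^2\ds=-\int_{\partial\Omega}(\partial_{\nnu}v)\,\eta^v_\alpha\ds$; equivalently, if $\Lambda$ denotes the Dirichlet-to-Neumann operator of the Laplacian on $\partial\Omega$, then $\mathcal{T}_\alpha(v)=\langle\partial_{\nnu}v,(\alpha+\Lambda)^{-1}\partial_{\nnu}v\rangle_{L^2(\partial\Omega)}$. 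By polarization $\mathcal{T}_\alpha$ is the quadratic form of a symmetric nonnegative operator on the finite-dimensional space $E(\lambda_n)$; let $\theta^\alpha_1\ge\dots\ge\theta^\alpha_m\ge0$ be its eigenvalues with respect to the $L^2(\Omega)$ scalar product, and note that, by the normalization \eqref{eq:hp_eigen}, the eigenvalues of the limiting form $v\mapsto\int_{\partial\Omega}(\partial_{\nnu}v)^2\ds$ on $E(\lambda_n)$ are exactly $\int_{\partial\Omega}(\partial_{\nnu}\varphi_{n+i-1})^2\ds$.

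The core of the argument is a two-sided estimate $\lambda^\alpha_{n+i-1}=\lambda_n-\theta^\alpha_i+(\text{lower order})$ for $i=1,\dots,m$. For the upper bound I would apply the min--max principle to the $(n+m-1)$-dimensional space spanned by the corrected Dirichlet eigenfunctions $\psi_k:=\varphi_k+\eta^{\varphi_k}_\alpha$, $k=1,\dots,n+m-1$: since each $\psi_k$ satisfies the Robin condition exactly and is harmonic up to the term $\lambda_k\varphi_k$, a short integration by parts gives $\int_\Omega\nabla\psi_k\cdot\nabla\psi_l\dx+\alpha\int_{\partial\Omega}\psi_k\psi_l\ds=\lambda_k\delta_{kl}+\langle\partial_{\nnu}\varphi_k,(\alpha+\Lambda)^{-1}\partial_{\nnu}\varphi_l\rangle$ and $\int_\Omega\psi_k\psi_l\dx=\delta_{kl}+O(\alpha^{-1})$, so the Robin Rayleigh quotient on this space reduces to a finite generalized eigenvalue problem whose matrix is $\mathrm{diag}(\lambda_k)$ plus an $O(\alpha^{-1})$ perturbation that, on the cluster block, is the matrix of $\mathcal{T}_\alpha$ in the basis $\varphi_n,\dots,\varphi_{n+m-1}$ (the coupling between the cluster and the lower modes being only $O(\alpha^{-2})$, thanks to the spectral gap $\lambda_n-\lambda_{n-1}>0$); diagonalizing this matrix — where a cancellation built into the harmonic structure of the correctors removes the naive $O(\alpha^{-1})$ error — yields $\lambda^\alpha_{n+i-1}\le\lambda_n-\theta^\alpha_i+O(\alpha^{-2})$. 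The matching lower bound is the delicate part: one first proves that every $L^2(\Omega)$-normalized Robin eigenfunction $\varphi^\alpha$ with eigenvalue tending to $\lambda_n$ is close to $E(\lambda_n)$, namely $\|\varphi^\alpha-\Pi\varphi^\alpha\|_{H^1(\Omega)}\to0$ (with $\Pi$ the $L^2$-projection onto $E(\lambda_n)$) and $\|\varphi^\alpha\|_{L^2(\partial\Omega)}=O(\alpha^{-1})$, quantitatively under extra regularity via boundary elliptic estimates in the spirit of \eqref{eq:fili_2}; then, plugging $\varphi^\alpha$ into the Rayleigh quotient, decomposing along the Dirichlet eigenspaces, and bounding the boundary contribution of $\Pi\varphi^\alpha$ from below by $\mathcal{T}_\alpha(\Pi\varphi^\alpha)$ through its variational definition, one runs the estimate on the whole cluster $\{\lambda_n,\dots,\lambda_{n+m-1}\}$ simultaneously and obtains $\lambda^\alpha_{n+i-1}\ge\lambda_n-\theta^\alpha_i-(\text{lower order})$ for all $i$ at once.

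It then remains to analyze $\mathcal{T}_\alpha$ and its minimizer asymptotically: since $\alpha(\alpha+\Lambda)^{-1}\to\mathrm{Id}$ strongly, one has $\alpha\,\mathcal{T}_\alpha(v)\to\int_{\partial\Omega}(\partial_{\nnu}v)^2\ds$ for every $v\in E(\lambda_n)$, with no rate in the bare Lipschitz case, and with a rate improving with the Sobolev regularity of $\partial_{\nnu}v$ — hence with the regularity of $\partial\Omega$ through boundary elliptic regularity for $\varphi_n$ — which also requires a careful description of $\eta^v_\alpha$ near $\partial\Omega$. Consequently $\alpha\,\theta^\alpha_i$ converges to the $i$-th eigenvalue of the limiting form, which as recalled equals $\int_{\partial\Omega}(\partial_{\nnu}\varphi_{n+i-1})^2\ds$; combining this with the two-sided bound above gives \eqref{eq:main_th1}, and feeding in the sharper rates — for $\mathcal{T}_\alpha$ and, above all, for the $H^1$-closeness of the Robin eigenfunctions, which is exactly where the $C^{1,1}$ and $C^3$ hypotheses enter — yields the two refined expansions, with $i$-independent remainders because all the estimates are uniform over the cluster.

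The step I expect to be the main obstacle is the lower bound in the degenerate case $m\ge2$: since $\alpha\mapsto\lambda^\alpha_{n+i-1}$ need not be differentiable at a multiple Dirichlet eigenvalue, the De l'Hôpital/eigenvalue-derivative mechanism \eqref{eq:hopital}--\eqref{eq:fili_4} of the simple case is unavailable, so one has to control the $m$-dimensional almost-eigenspace spanned by $\varphi^\alpha_n,\dots,\varphi^\alpha_{n+m-1}$, show that its matrix of boundary interactions is, to leading order, diagonalized compatibly with \eqref{eq:hp_eigen}, and quantify the $H^1$-distance of these eigenfunctions to $E(\lambda_n)$ at the rate dictated by the regularity of $\Omega$, with constants uniform in $i$. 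This quantitative boundary/elliptic analysis is where the bulk of the technical work, and the distinction between the Lipschitz, $C^{1,1}$ and $C^3$ statements, is concentrated.
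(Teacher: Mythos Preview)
Your corrected test functions $\psi_k=\varphi_k+\eta^{\varphi_k}_\alpha=\varphi_k-U_\alpha^{\varphi_k}$ and the torsional quantity $\mathcal{T}_\alpha$ are exactly the objects the paper builds its proof on, and the cancellation you spot in the cluster block of $A-\lambda_nB$ is precisely the content of \Cref{lemma:Q_phi_psi}. Where you depart from the paper is in how the two-sided estimate is obtained. You propose a separate min--max upper bound (correct, and essentially the same computation as the paper's) together with a lower bound coming from a direct analysis of the Robin eigenfunctions: decompose $\varphi^\alpha=\Pi\varphi^\alpha+r$, bound the energy of $r$ from below via the variational definition of $\mathcal{T}_\alpha$, and then control $\|r\|^2$ using a priori regularity of $\varphi^\alpha$. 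The paper instead applies the abstract \emph{Lemma on small eigenvalues} (\Cref{lemma:CdV}) once: with $F=F_\alpha^n=\mathrm{span}\{\psi_n,\dots,\psi_{n+m-1}\}$, the single estimate $\delta\le\lambda_n\sqrt{\omega_n(\alpha)}/(1-\sqrt{\omega_n(\alpha)})$ immediately yields $|\lambda^\alpha_{n+i-1}-\lambda_n-\xi^\alpha_{n,i}|\le C\omega_n(\alpha)$, upper \emph{and} lower, with no reference to the Robin eigenfunctions at all.

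This matters because the step you flag as the main obstacle is genuinely problematic in your scheme. To make your lower bound work for $i\ge2$ you need $\sup_{w\in\mathcal{E}^n_\alpha,\,\|w\|=1}\|(I-\Pi)w\|_{L^2}^2=o(\alpha^{-1})$, and your proposed route to this --- $\|\varphi^\alpha\|_{L^2(\partial\Omega)}=O(\alpha^{-1})$ ``quantitatively under extra regularity via boundary elliptic estimates in the spirit of \eqref{eq:fili_2}'' --- is circular: \eqref{eq:fili_2} is Filinovskiy's result, available only for simple $\lambda_n$ and $C^3$ boundary, which is exactly the regime the theorem is meant to extend. A priori one only has $\|\varphi^\alpha\|_{L^2(\partial\Omega)}=O(\alpha^{-1/2})$ from the energy identity, and upgrading this without already knowing the eigenvalue expansion requires uniform-in-$\alpha$ $H^2$ bounds for Robin eigenfunctions, which are not straightforward (the naive elliptic estimate carries an $\alpha$-dependent constant). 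The paper sidesteps all of this: every quantitative input lives on the Dirichlet side, namely $\omega_n(\alpha)=\sup\|U_\alpha^\varphi\|_{L^2}^2$ and $\rho_n(\alpha)$, and the $C^{1,1}$ and $C^3$ refinements enter only through the smoothness of $\partial_{\nnu}\varphi_n$ and the resulting rates for these two quantities (\Cref{lemma:L2}, \Cref{lemma:rho}), never through regularity of $\varphi^\alpha$.
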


We refer to points (2) and (3) in \Cref{sec:comments} for comments on the sharpness of \Cref{thm:main} and on other possible assumptions on $\Omega$.

Besides the asymptotic analysis of the eigenvalue variation, in the present paper we are able to obtain the sharp asymptotic behavior of the difference between the Dirichlet and Robin eigenfunctions (more precisely, of the spectral projections), as $\alpha\to+\infty$. In order to state such results, we first need a quick preliminary discussion and to fix some more notation.

Similarly to what happens in other situations (see point (3) of \Cref{sec:comments} for more details), the proof of the sharp asymptotic behavior of the Robin eigenvalues passes through the identification of a suitable  quantity which somehow naturally emerges as first order error term in the eigenvalue variation, see \Cref{sec:sketch}. For the problem under consideration, such crucial role is played by a novel notion of torsional rigidity, which we here introduce. 
\begin{definition}\label{def:torsional}
	For any $\alpha>0$ and $f\in L^2(\partial \Omega)$, we define the \emph{$(\alpha,f)$-torsional rigidity of $\partial\Omega$} as follows:
	\begin{equation*}
		T_\alpha(\partial\Omega,f):=-2\inf\left\{ \frac{1}{2}\int_{\Omega}|\nabla u|^2\dx+\frac{\alpha}{2}\int_{\partial\Omega}u^2\ds-\int_{\partial\Omega}fu\ds\colon u\in H^1(\Omega) \right\}.
	\end{equation*}
\end{definition}
By standard variational methods, one can easily prove, for any $f\in L^2(\partial\Omega)$, the existence of a unique function $U_{\partial\Omega,\alpha,f}\in H^1(\Omega)$ achieving $T_\alpha(\partial\Omega,f)$ and satisfying
\begin{equation}\label{eq:torsion_strong}\tag{T}
	\begin{bvp}
		-\Delta U_{\partial\Omega,\alpha,f}&=0, &&\text{in }\Omega, \\
	\partial_{\nnu}U_{\partial\Omega,\alpha,f}+\alpha\,U_{\partial\Omega,\alpha,f}&=f, &&\text{on }\partial\Omega
	\end{bvp}
\end{equation}
in a weak sense, see \Cref{lemma:minimizer}. The name \enquote{torsional rigidity} comes from the fact that, when $f=1$, the quantity $T_\alpha(\partial\Omega,1)$ can be seen as the torsional rigidity of $\partial\Omega$ for the Dirichlet-to-Neumann operator with constant potential $\alpha$.

It turns out that, when $f$ is the normal derivative of a Dirichlet eigenfunction $\varphi_n$, i.e. $f=\partial_{\nnu}\varphi_n$, such minimizer $U_{\partial\Omega,\alpha,\partial_{\nnu}\varphi_n}$ provides the first order approximation of the difference between $\varphi_n$ and its projection onto the eigenspaces corresponding to the Robin eigenvalues converging to $\lambda_n$. We refer to \Cref{sec:sketch} for some insights on the reason why this happens.

Let us now fix some more notation, for $\alpha>0$ and $n\in\N$. In the rest of the paper, we denote by
\begin{equation*}
	E(\lambda_n^\alpha):=\{\varphi\in H^1(\Omega)\colon \varphi~\text{weakly solves \eqref{eq:robin_strong}  with $\lambda=\lambda_n^\alpha$} \}
\end{equation*}
the eigenspace corresponding to the Robin eigenvalue $\lambda_n^\alpha$, and by
\begin{equation}\label{eq:E_alpha}
	\mathcal{E}_\alpha^n:=\bigoplus_{i=1}^{m} E(\lambda_{n+i-1}^\alpha)
\end{equation}
the sum of the eigenspaces corresponding to the family of Robin eigenvalues converging, as $\alpha\to+\infty$, to the same Dirichlet eigenvalue $\lambda_n$, where $m=\dim E(\lambda_n)$ and
\begin{equation*}
	\lambda_{n-1}<\lambda_n=\cdots=\lambda_{n+m-1}<\lambda_{n+m}.
\end{equation*}
Moreover, we let
\begin{equation*}
	\Pi_\alpha^n\colon L^2(\Omega)\to \mathcal{E}_\alpha^n\sub L^2(\Omega)
\end{equation*}
the orthogonal projection onto $\mathcal{E}_\alpha^n$ with respect to the scalar product of $L^2(\Omega)$. Finally, for the sake of brevity, we denote
\begin{equation}\label{eq:H_alpha}
	\norm{u}_{\mathcal{H}_\alpha}:=\left(\int_\Omega|\nabla u|^2\dx+\alpha\int_{\partial\Omega}u^2\ds\right)^{1/2}\quad\text{for }u\in H^1(\Omega).
\end{equation}
We are now ready to state our main result concerning the rate of convergence of the spectral projections.
\begin{theorem}\label{thm:eigenfunctions}
	For any $n\in\N$ there holds
	\begin{equation*}
		\norm{\varphi-\frac{\Pi_\alpha^n\varphi}{\norm{\Pi_\alpha^n\varphi}_{L^2(\Omega)}}-U_{\partial\Omega,\alpha,\partial_{\nnu}\varphi}}_{\mathcal{H}_\alpha}^{2}=o\left(\frac{1}{\alpha}\right)\quad\text{as }\alpha\to+\infty,
	\end{equation*}
	for all $\varphi\in E(\lambda_n)$ such that $\norm{\varphi}_{L^2(\Omega)}=1$, where $U_{\partial\Omega,\alpha,\partial_{\nnu}\varphi}$ denotes the unique function achieving $T_\alpha(\partial\Omega,\partial_{\nnu}\varphi)$. In particular, if $\dim E(\lambda_n)=1$, we have
	\begin{equation*}
		\norm{\varphi_n-\varphi_n^\alpha-U_{\partial\Omega,\alpha,\partial_{\nnu}\varphi_n}}_{\mathcal{H}_\alpha}^{2}=o\left(\frac{1}{\alpha}\right),\quad\text{as }\alpha\to+\infty,
	\end{equation*}
	where $\varphi_n^\alpha$ is the unique $L^2(\Omega)$-normalized eigenfunction of $\lambda_n^\alpha$ such that
	\begin{equation*}
		\int_\Omega\varphi_n\varphi_n^\alpha\dx>0\quad\text{for }\alpha~\text{sufficiently large}.
	\end{equation*}
	Moreover, if $\Omega$ is of class $C^{1,1}$, then the remainder term $o(\alpha^{-1})$ can be replaced by $O(\alpha^{-2})$. All the remainder terms only depend on $d$, $\Omega$ and $n$.
\end{theorem}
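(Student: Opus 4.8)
The plan is to regard the Robin eigenproblem, for $\alpha\gg1$, as a singular perturbation of the Dirichlet one in which the torsional corrector $U:=U_{\partial\Omega,\alpha,\partial_{\nnu}\varphi}$ is built directly into an approximate eigenfunction, and then to run an abstract spectral–stability estimate; the same construction produces the Rayleigh quotient feeding \Cref{thm:main}. The first ingredient is a priori control on $U$. Writing $f:=\partial_{\nnu}\varphi$, testing the Euler–Lagrange equation for $U$ (see \Cref{lemma:minimizer}) against $U$ gives $\norm{U}_{\mathcal H_\alpha}^2=\int_{\partial\Omega}fU\ds=T_\alpha(\partial\Omega,f)$; combining with the trace bound $\norm{U}_{L^2(\partial\Omega)}^2\le\alpha^{-1}\norm{U}_{\mathcal H_\alpha}^2$ and \Cref{lemma:equiv_norms} already yields $\norm{U}_{\mathcal H_\alpha}^2\le\alpha^{-1}\norm{f}_{L^2(\partial\Omega)}^2$ and $\norm{U}_{L^2(\Omega)}=O(\alpha^{-1})$. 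The sharper asymptotics $\alpha\,T_\alpha(\partial\Omega,f)\to\norm f_{L^2(\partial\Omega)}^2$, with a power rate under the regularity hypotheses, I would read off from $U\restr{\partial\Omega}=(\Lambda+\alpha)^{-1}f$, with $\Lambda$ the Dirichlet-to-Neumann map, since $\alpha U\restr{\partial\Omega}=f-\Lambda(\Lambda+\alpha)^{-1}f\to f$ in $L^2(\partial\Omega)$; this is the separate asymptotic analysis of the novel torsional rigidity announced in the introduction. Finally, Green's formula together with $\varphi=0$ on $\partial\Omega$ gives the algebraic identity $\int_\Omega U\varphi\dx=-\lambda_n^{-1}\int_{\partial\Omega}fU\ds=-\lambda_n^{-1}T_\alpha(\partial\Omega,f)$, which governs the normalization.

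The heart of the argument is the choice $g:=\varphi-U$ as approximate eigenfunction. A one-line computation shows that $g$ satisfies the homogeneous Robin condition exactly and solves $\int_\Omega\nabla g\cdot\nabla v\dx+\alpha\int_{\partial\Omega}gv\ds=\lambda_n\int_\Omega\varphi v\dx$ for all $v\in H^1(\Omega)$, so that $g$ is a Robin eigenfunction at $\lambda_n$ up to the explicit residual $\lambda_n U$. Since, for $\alpha$ large, the eigenvalues of $\mathcal E_\alpha^n$ are separated from the rest of the Robin spectrum by a gap bounded below uniformly in $\alpha$, an abstract perturbation lemma — expand $g-\Pi_\alpha^n g$ in the Robin eigenbasis, split off the finitely many modes lying below $\lambda_n$ and those lying above, and bound each block by the gap and by the residual paired against it — controls $\norm{g-\Pi_\alpha^n g}_{\mathcal H_\alpha}$ by the residual. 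Here one uses crucially that $\varphi\in E(\lambda_n)$ is $L^2$-orthogonal to all Dirichlet eigenfunctions of a different eigenvalue, together with the already available crude convergence of the Robin eigenfunctions, to see that the components of $\lambda_n U$ on the modes outside $\mathcal E_\alpha^n$ are of smaller order; in the simple case the diagonalization in \eqref{eq:hp_eigen} and the boundary relation $\alpha U=f-\partial_{\nnu}U$ on $\partial\Omega$ make this transparent.

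To pass from $\Pi_\alpha^n g$ to $\Pi_\alpha^n\varphi/\norm{\Pi_\alpha^n\varphi}_{L^2(\Omega)}$ one writes, by linearity, $\Pi_\alpha^n g=\Pi_\alpha^n\varphi-\Pi_\alpha^n U$; using the identity $\int_\Omega U\varphi\dx=-\lambda_n^{-1}T_\alpha(\partial\Omega,f)$ and the smallness of $g-\Pi_\alpha^n g$ one checks that $\norm{\Pi_\alpha^n\varphi}_{L^2(\Omega)}=1+o(\alpha^{-1})$ and that the discrepancy between $\Pi_\alpha^n g$ and the normalized projection is absorbed at the correct order, after using $\norm{\Pi_\alpha^n\varphi}_{\mathcal H_\alpha}=O(1)$; assembling the pieces gives the stated estimate, and the $C^{1,1}$ case follows by feeding the improved rate for $U$ from the first step into the residual bound. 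The ``in particular'' is the specialization $\dim E(\lambda_n)=1$, where $\Pi_\alpha^n\varphi/\norm{\Pi_\alpha^n\varphi}_{L^2(\Omega)}=\varphi_n^\alpha$ because of the sign normalization. Finally, testing the equation for $g$ against $g$ yields $\int_\Omega|\nabla g|^2\dx+\alpha\int_{\partial\Omega}g^2\ds=\lambda_n+T_\alpha(\partial\Omega,f)$ while $\norm g_{L^2(\Omega)}^2=1+2\lambda_n^{-1}T_\alpha(\partial\Omega,f)+O(\alpha^{-2})$, so the Rayleigh quotient of $g$ is $\lambda_n-T_\alpha(\partial\Omega,f)+O(\alpha^{-2})$; combined with the min–max characterization and the projection estimate this recovers \Cref{thm:main}.

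The main obstacle I expect is the one already stressed in the introduction: multiplicity. When $\lambda_n$ is multiple there is no differentiability of $\alpha\mapsto\lambda^\alpha$ and the cluster eigenfunctions need not converge to the preferred basis $\{\varphi_{n+i-1}\}$; what must be proved quantitatively is that the Gram matrix of $\{\partial_{\nnu}\varphi_{n+i-1}^\alpha\}$ on $\partial\Omega$ asymptotically diagonalizes in the basis adapted to the scalar product \eqref{eq:intr_bilinear}, so that the functions $\varphi_{n+i-1}-U_{\partial\Omega,\alpha,\partial_{\nnu}\varphi_{n+i-1}}$ are asymptotically the right vectors spanning $\mathcal E_\alpha^n$; this forces one to control the entire correspondence between $E(\lambda_n)$ and $\mathcal E_\alpha^n$, not just a single projection. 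The second sensitive point is quantitative: the naive residual bound only gives $O(\alpha^{-1})$, and extracting the sharp $o(\alpha^{-1})$ requires the second-order information on $U$ (equivalently, the next term in the expansion of $(\Lambda+\alpha)^{-1}$) together with a careful cancellation between the boundary-layer parts of $\varphi-\Pi_\alpha^n\varphi/\norm{\Pi_\alpha^n\varphi}_{L^2(\Omega)}$ and of $U$.
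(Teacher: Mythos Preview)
Your overall strategy matches the paper's exactly: set $g:=\varphi-U$ (the paper writes $h_\alpha^\varphi=\mathscr P_\alpha^n(\varphi)$), observe that $g$ satisfies the Robin condition exactly with residual $\lambda_n U$ in $L^2(\Omega)$, feed this into an abstract spectral--gap estimate (the paper uses Colin de Verdi\`ere's \Cref{lemma:CdV}), then pass from $\Pi_\alpha^n g$ to the normalized $\Pi_\alpha^n\varphi$. Your identity $\int_\Omega U\varphi\dx=-\lambda_n^{-1}T_\alpha(\partial\Omega,f)$ is correct and neat, though the paper does not use it.

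There is, however, a genuine gap at the one quantitative step that drives the entire rate. You assert that the trace bound and \Cref{lemma:equiv_norms} give $\norm{U}_{L^2(\Omega)}=O(\alpha^{-1})$; they do not. From $\norm{U}_{\mathcal H_\alpha}^2\le\alpha^{-1}\norm{f}_{L^2(\partial\Omega)}^2$ and \Cref{lemma:coerc} one only obtains $\norm{U}_{L^2(\Omega)}=O(\alpha^{-1/2})$, and plugging this into the spectral--gap argument yields nothing better than $O(\alpha^{-1/2})$ for the projection error. You flag this honestly at the end (``the naive residual bound only gives $O(\alpha^{-1})$''), but the fix you propose---second--order information on $(\Lambda+\alpha)^{-1}$ and a boundary--layer cancellation---is not what the paper does and would require more regularity than \eqref{ass:Omega}.

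The paper's key observation, which you are missing, is that $U$ is \emph{harmonic} in $\Omega$ and therefore its interior $L^2$ mass is genuinely of lower order than its energy. This is \Cref{lemma:L2_weak}: one shows $\norm{U_\alpha^\varphi}_{L^2(\Omega)}^2/T_\alpha(\partial\Omega,\partial_{\nnu}\varphi)\to 0$ by a short compactness/contradiction argument (a nonzero $H^1$ weak limit would be a nontrivial harmonic function in $H^1_0(\Omega)$), uniformly over the finite--dimensional eigenspace. This gives $\omega_n(\alpha):=\sup_\varphi\norm{U_\alpha^\varphi}_{L^2(\Omega)}^2=o(\alpha^{-1})$ under the bare hypothesis \eqref{ass:Omega}, and \Cref{lemma:L2} upgrades it to $O(\alpha^{-2})$ when $\Omega\in C^{1,1}$ via the convergence $\alpha U_\alpha^\varphi\to U_{\partial_{\nnu}\varphi}$ (\Cref{prop:conv_torsion}). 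Once $\omega_n(\alpha)$ is in hand, the Colin de Verdi\`ere lemma gives $\norm{g-\Pi_\alpha^n g}_{L^2(\Omega)}^2\le C\,\omega_n(\alpha)$, and the paper upgrades this to $\mathcal H_\alpha$ by testing the equation for $g-\Pi_\alpha^n g$ against itself; the final bound for the squared $\mathcal H_\alpha$-norm is $C\bigl(\omega_n(\alpha)+\alpha^{-1}\sqrt{\omega_n(\alpha)}\bigr)$, from which both the general statement and the $C^{1,1}$ improvement follow. The orthogonality of $\varphi$ to the other Dirichlet eigenspaces, which you invoke, plays no role: what matters is only the smallness of the residual $\lambda_n U$ in $L^2(\Omega)$.
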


We now state, as a consequence of \Cref{thm:eigenfunctions}, the explicit asymptotic behavior of the difference, in norm, between a Dirichlet eigenfunction and its projection onto the corresponding Robin eigenspaces. Loosely speaking, in view of \Cref{thm:eigenfunctions} we have 
\begin{equation*} 
	\norm{\varphi-\frac{\Pi_\alpha^n\varphi}{\norm{\Pi_\alpha^n\varphi}_{L^2(\Omega)}}}_{\mathcal{H}_\alpha}^2\sim\norm{U_{\partial\Omega,\alpha,\partial_{\nnu}\varphi}}_{\mathcal{H}_\alpha}^2\quad\text{as }\alpha\to+\infty
\end{equation*}
and, by \Cref{lemma:minimizer},
\begin{equation*}
	\norm{U_{\partial\Omega,\alpha,\partial_{\nnu}\varphi}}_{\mathcal{H}_\alpha}^2=T_\alpha(\partial\Omega,\partial_{\nnu}\varphi).
\end{equation*}
Therefore, studying the behavior of the spectral projection amounts to study $T_\alpha(\partial\Omega,\partial_{\nnu}\varphi)$, as $\alpha\to+\infty$. Hence, in \Cref{lemma:bound} we prove that
\begin{equation*}
	\alpha\, T_\alpha(\partial\Omega,\partial_{\nnu}\varphi)\to \int_{\partial\Omega}(\partial_{\nnu}\varphi)^2\ds\quad\text{as }\alpha\to+\infty,
\end{equation*}
thus leading to the following.
\begin{corollary}\label{cor:eigenfunctions}
	For any $n\in\N$, we have that
	\begin{equation*}
		\norm{\varphi-\frac{\Pi_\alpha^n\varphi}{\norm{\Pi_\alpha^n\varphi}_{L^2(\Omega)}}}_{\mathcal{H}_\alpha}^2=\frac{1}{\alpha}\int_{\partial\Omega}(\partial_{\nnu}\varphi)^2\ds+o\left(\frac{1}{\alpha}\right)\quad\text{as }\alpha\to+\infty,
	\end{equation*}
	for all $\varphi\in E(\lambda_n)$ such that $\norm{\varphi}_{L^2(\Omega)}=1$. If $\dim E(\lambda_n)=1$, then
	\begin{equation*}
		\norm{\varphi_n-\varphi_n^\alpha}_{\mathcal{H}_\alpha}^2=\frac{1}{\alpha}\int_{\partial\Omega}(\partial_{\nnu}\varphi_n)^2\ds+o\left(\frac{1}{\alpha}\right)\quad\text{as }\alpha\to+\infty,
	\end{equation*}
	where $\varphi_n^\alpha$ is the unique $L^2(\Omega)$-normalized eigenfunction of $\lambda_n^\alpha$ such that
	\begin{equation*}
		\int_\Omega\varphi_n\varphi_n^\alpha\dx>0\quad\text{for }\alpha~\text{sufficiently large}.
	\end{equation*}
	If $\Omega$ is of class $C^{1,1}$, then the remainder term $o(\alpha^{-1})$ can be replaced by $O(\alpha^{-4/3})$ and, if $\Omega$ is of class $C^{2,1}$, by $O(\alpha^{-3/2})$. Moreover, all the remainder terms only depend on $d$, $\Omega$ and $n$.
\end{corollary}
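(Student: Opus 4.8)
The plan is to deduce \Cref{cor:eigenfunctions} directly from \Cref{thm:eigenfunctions}, \Cref{lemma:minimizer} and \Cref{lemma:bound}, so the proof is essentially a chaining of already established facts plus careful bookkeeping of the error terms. Fix $n\in\N$ and $\varphi\in E(\lambda_n)$ with $\norm{\varphi}_{L^2(\Omega)}=1$, and abbreviate $v_\alpha:=\Pi_\alpha^n\varphi/\norm{\Pi_\alpha^n\varphi}_{L^2(\Omega)}$ (well defined for $\alpha$ large, as in the proof of \Cref{thm:eigenfunctions}) and $U_\alpha:=U_{\partial\Omega,\alpha,\partial_{\nnu}\varphi}$. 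First I would apply the reverse triangle inequality for $\norm{\cdot}_{\mathcal{H}_\alpha}$ together with \Cref{thm:eigenfunctions}:
\begin{equation*}
	\bigl|\,\norm{\varphi-v_\alpha}_{\mathcal{H}_\alpha}-\norm{U_\alpha}_{\mathcal{H}_\alpha}\,\bigr|\leq\norm{\varphi-v_\alpha-U_\alpha}_{\mathcal{H}_\alpha}=o(\alpha^{-1})\quad\text{as }\alpha\to+\infty,
\end{equation*}
so that $\norm{\varphi-v_\alpha}_{\mathcal{H}_\alpha}=\norm{U_\alpha}_{\mathcal{H}_\alpha}+o(\alpha^{-1})$.

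Next I would square this relation. By \Cref{lemma:minimizer} one has $\norm{U_\alpha}_{\mathcal{H}_\alpha}^2=T_\alpha(\partial\Omega,\partial_{\nnu}\varphi)$, while \Cref{lemma:bound} gives $\alpha\,T_\alpha(\partial\Omega,\partial_{\nnu}\varphi)\to\int_{\partial\Omega}(\partial_{\nnu}\varphi)^2\ds$ and in particular the a priori decay $\norm{U_\alpha}_{\mathcal{H}_\alpha}=O(\alpha^{-1/2})$. Expanding the square therefore yields
\begin{equation*}
	\norm{\varphi-v_\alpha}_{\mathcal{H}_\alpha}^2=\norm{U_\alpha}_{\mathcal{H}_\alpha}^2+2\,\norm{U_\alpha}_{\mathcal{H}_\alpha}\,o(\alpha^{-1})+o(\alpha^{-2})=T_\alpha(\partial\Omega,\partial_{\nnu}\varphi)+o(\alpha^{-3/2}),
\end{equation*}
and since $T_\alpha(\partial\Omega,\partial_{\nnu}\varphi)=\frac1\alpha\int_{\partial\Omega}(\partial_{\nnu}\varphi)^2\ds+o(\alpha^{-1})$ by \Cref{lemma:bound}, the first assertion of the corollary follows.

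For the regularity-improved statements I would run the same computation replacing the $o(\alpha^{-1})$ of \Cref{thm:eigenfunctions} by $O(\alpha^{-2})$ when $\Omega$ is of class $C^{1,1}$ (hence also when it is of class $C^3$): the cross term then becomes $O(\alpha^{-1/2})\,O(\alpha^{-2})=O(\alpha^{-5/2})$, so $\norm{\varphi-v_\alpha}_{\mathcal{H}_\alpha}^2=T_\alpha(\partial\Omega,\partial_{\nnu}\varphi)+O(\alpha^{-5/2})$, and in both cases the error is dominated by the one in the expansion of $T_\alpha(\partial\Omega,\partial_{\nnu}\varphi)$; the claimed rates $O(\alpha^{-5/4})$ and $O(\alpha^{-3/2})$ thus come from the quantitative form of \Cref{lemma:bound} in the $C^{1,1}$, respectively $C^3$, classes. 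Finally, when $\dim E(\lambda_n)=1$ the space $\mathcal{E}_\alpha^n=E(\lambda_n^\alpha)$ is one-dimensional and, by the sign normalization, $v_\alpha=\varphi_n^\alpha$ for $\alpha$ large; specializing $\varphi=\varphi_n$ gives the two displayed identities involving $\varphi_n^\alpha$.

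I do not expect a genuine obstacle here: the corollary is a formal consequence of \Cref{thm:eigenfunctions}, \Cref{lemma:minimizer} and \Cref{lemma:bound}. The only point requiring attention is the control of the remainders after squaring — specifically, verifying that the cross term $2\,\norm{U_\alpha}_{\mathcal{H}_\alpha}\,o(\alpha^{-1})$ is of strictly lower order than the principal error, which relies precisely on the decay $\norm{U_\alpha}_{\mathcal{H}_\alpha}=O(\alpha^{-1/2})$ supplied by \Cref{lemma:bound}, and on having the quantitative version of \Cref{lemma:bound} (not merely the convergence $\alpha T_\alpha\to\int_{\partial\Omega}(\partial_{\nnu}\varphi)^2\ds$) in order to reach the sharp rates in the smoother settings.
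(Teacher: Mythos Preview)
Your approach is essentially the paper's own: both arguments split $\varphi-v_\alpha=(\varphi-v_\alpha-U_\alpha)+U_\alpha$ and control the squared $\mathcal{H}_\alpha$-norm of the sum. The paper expands the inner product directly (obtaining three terms: the square of the remainder from \Cref{thm:eigenfunctions}, a cross term, and $\|U_\alpha\|_{\mathcal{H}_\alpha}^2-\alpha^{-1}\|\partial_{\nnu}\varphi\|_{L^2(\partial\Omega)}^2$), while you use the reverse triangle inequality and then square. These are the same maneuver in different dress, and for the basic $o(\alpha^{-1})$ statement your chain is correct.

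One inaccuracy concerns the sharpened remainders. You write that the rates $O(\alpha^{-5/4})$ and $O(\alpha^{-3/2})$ ``come from the quantitative form of \Cref{lemma:bound}'', but this is not where they originate: \eqref{eq:bound_th2} already yields $T_\alpha(\partial\Omega,\partial_{\nnu}\varphi)=\alpha^{-1}\|\partial_{\nnu}\varphi\|_{L^2(\partial\Omega)}^2+O(\alpha^{-2})$ as soon as $\partial_{\nnu}\varphi\in H^{1/2}(\partial\Omega)$, with no distinction between $C^{1,1}$ and $C^3$. In the paper's proof the bottleneck is elsewhere: the last term is estimated by $\rho_n(\alpha)$ (\Cref{lemma:rho}), which gives $O(\alpha^{-5/4})$ for $C^{1,1}$ and $O(\alpha^{-2})$ for $C^3$, and the cross term is bounded via \eqref{eq:eigenfunctions_claim3} together with $\|U_\alpha\|_{\mathcal{H}_\alpha}=O(\alpha^{-1/2})$, giving $O(\alpha^{-3/2})$; the worse of the two in each regime produces the stated rates. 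If instead you quote the conclusion of \Cref{thm:eigenfunctions} literally ($O(\alpha^{-2})$ for the unsquared norm in the $C^{1,1}$ case) and combine with \eqref{eq:bound_th2}, your computation in fact gives $O(\alpha^{-2})$ --- stronger than the corollary claims. So there is no gap in your strategy, but the sentence attributing the final rates to \Cref{lemma:bound} should be rewritten to point to the correct sources (or simply drop the attribution, since your route already beats the stated bounds).
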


Combining \Cref{cor:eigenfunctions} with \eqref{eq:fili_2} (see \cite[Theorem 4]{Filinovskiy2017}) we deduce that the main contribution in the estimate of the $\mathcal{H}_\alpha$-norm of the difference of Robin and Dirichlet eigenfunctions (at least in case of simple limit eigenvalue), is given by the boundary term, while the $L^2(\Omega)$-norm of the gradient of the difference is a lower order term. More precisely, we have the following.
\begin{corollary}
	Let $\Omega$ be of class $C^3$ and let $n\in\N$ be such that $\lambda_n$ is simple. Then, there holds
	\begin{equation*}
		\int_{\partial\Omega}|\varphi_n^\alpha|^2\ds=\frac{1}{\alpha^2}\int_{\partial\Omega}(\partial_{\nnu}\varphi_n)^2\ds+O\left(\frac{1}{\alpha^{5/2}}\right)\quad\text{as }\alpha\to+\infty,
	\end{equation*}
		where $\varphi_n^\alpha$ is the unique $L^2(\Omega)$-normalized eigenfunction of $\lambda_n^\alpha$ such that
	\begin{equation*}
		\int_\Omega\varphi_n\varphi_n^\alpha\dx>0\quad\text{for }\alpha~\text{sufficiently large}.
	\end{equation*}
\end{corollary}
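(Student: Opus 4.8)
\textit{Proof plan.} The idea is to read off the boundary integral $\int_{\partial\Omega}|\varphi_n^\alpha|^2\ds$ from the sharp expansion of $\norm{\varphi_n-\varphi_n^\alpha}_{\mathcal{H}_\alpha}^2$ given by \Cref{cor:eigenfunctions}, after discarding the gradient contribution, which \eqref{eq:fili_2} controls at a strictly faster rate. Since $\lambda_n$ is simple and $\Omega$ is of class $C^3$ (hence of class $C^{1,1}$ and satisfying \eqref{ass:Omega}), \Cref{cor:eigenfunctions} applies with $\varphi=\varphi_n$ and gives, as $\alpha\to+\infty$,
\[
	\norm{\varphi_n-\varphi_n^\alpha}_{\mathcal{H}_\alpha}^2=\frac{1}{\alpha}\int_{\partial\Omega}(\partial_{\nnu}\varphi_n)^2\ds+O\!\left(\frac{1}{\alpha^{3/2}}\right),
\]
with remainder depending only on $d$, $\Omega$, $n$.

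Next I would unfold the left-hand side via the definition \eqref{eq:H_alpha}: since $\varphi_n,\varphi_n^\alpha\in H^1(\Omega)$,
\[
	\norm{\varphi_n-\varphi_n^\alpha}_{\mathcal{H}_\alpha}^2=\int_\Omega|\nabla\varphi_n-\nabla\varphi_n^\alpha|^2\dx+\alpha\int_{\partial\Omega}|\varphi_n-\varphi_n^\alpha|^2\ds,
\]
and use the Dirichlet condition $\varphi_n=0$ on $\partial\Omega$ to replace the boundary integral by $\alpha\int_{\partial\Omega}|\varphi_n^\alpha|^2\ds$. Solving for the boundary term yields
\[
	\alpha\int_{\partial\Omega}|\varphi_n^\alpha|^2\ds=\frac{1}{\alpha}\int_{\partial\Omega}(\partial_{\nnu}\varphi_n)^2\ds+O\!\left(\frac{1}{\alpha^{3/2}}\right)-\int_\Omega|\nabla\varphi_n-\nabla\varphi_n^\alpha|^2\dx.
\]

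The key point is that the gradient term is of lower order: by \eqref{eq:fili_2} (that is, \cite[Theorem 4]{Filinovskiy2017}) there exist $M_n,\alpha_n>0$ with $\norm{\varphi_n^\alpha-\varphi_n}_{H^2(\Omega)}\le M_n/\alpha$ for $\alpha>\alpha_n$, hence $\int_\Omega|\nabla\varphi_n-\nabla\varphi_n^\alpha|^2\dx=O(\alpha^{-2})$, which is absorbed into the $O(\alpha^{-3/2})$ remainder. Dividing by $\alpha$ then gives
\[
	\int_{\partial\Omega}|\varphi_n^\alpha|^2\ds=\frac{1}{\alpha^2}\int_{\partial\Omega}(\partial_{\nnu}\varphi_n)^2\ds+O\!\left(\frac{1}{\alpha^{5/2}}\right),
\]
as claimed, with remainder depending only on $d$, $\Omega$, $n$. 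There is essentially no analytic obstacle beyond bookkeeping of the competing error rates; the only point requiring a remark is that the normalization of $\varphi_n^\alpha$ used in \Cref{cor:eigenfunctions} and the one in \cite[Theorem 4]{Filinovskiy2017} coincide — both select, for $\alpha$ large, the $L^2(\Omega)$-normalized eigenfunction of $\lambda_n^\alpha$ with $\int_\Omega\varphi_n\varphi_n^\alpha\dx>0$ — so that \eqref{eq:fili_2} may legitimately be invoked for the $\varphi_n^\alpha$ appearing in \Cref{cor:eigenfunctions}.
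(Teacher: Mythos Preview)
Your argument is correct and follows precisely the approach indicated in the paper: the corollary is stated as a direct combination of \Cref{cor:eigenfunctions} (with the $O(\alpha^{-3/2})$ remainder available in the $C^3$ case) and Filinovskiy's $H^2$-bound \eqref{eq:fili_2}, and your bookkeeping of the error rates is exactly what is needed.
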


Finally, as a byproduct of our main result \Cref{thm:main}, we are able to obtain a remarkable property concerning the multiplicity of Robin eigenvalues on rectangles: every Dirichlet eigenvalue on a $2$-dimensional rectangle splits when passing to Robin regime, for $\alpha<+\infty$. More precisely, we have the following.
\begin{corollary}\label{cor:simplicity}
	Let $\Omega:=(0,l)\times (0,L)$, with $l\neq L$. Then, for any $n\in\N$, $\lambda_n^\alpha$ is simple for $\alpha$ sufficiently large (depending on $n$).
\end{corollary}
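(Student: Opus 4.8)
The plan is to combine the asymptotic expansion of Theorem~\ref{thm:main} with a careful analysis of the associated spectral gaps for the rectangle. Recall that the Dirichlet eigenvalues of $\Omega=(0,l)\times(0,L)$ are $\pi^2(j^2/l^2+k^2/L^2)$ for $j,k\in\N$, with eigenfunctions (after $L^2$-normalization) $\varphi_{j,k}(x,y)=\frac{2}{\sqrt{lL}}\sin(j\pi x/l)\sin(k\pi y/L)$. Fix $n\in\N$ and let $m:=\dim E(\lambda_n)$. If $m=1$, there is nothing to prove: simplicity of $\lambda_n$ is preserved for large $\alpha$ by continuity of the eigenvalues in $\alpha$ together with the convergence \eqref{eq:convergence_eigenvalues}. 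So assume $m\geq 2$, and let $(j_1,k_1),\dots,(j_m,k_m)$ be the distinct index pairs realizing $\lambda_n$.

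The first step is to compute, for each index pair $(j,k)$ realizing $\lambda_n$, the boundary integral $\int_{\partial\Omega}(\partial_{\nnu}\varphi_{j,k})^2\ds$. A direct computation gives
\begin{equation*}
	\int_{\partial\Omega}(\partial_{\nnu}\varphi_{j,k})^2\ds=\frac{4\pi^2}{lL}\left(\frac{2j^2}{l^2}\cdot\frac{L}{2}\cdot 1+\frac{2k^2}{L^2}\cdot\frac{l}{2}\cdot 1\right)\cdot\frac{1}{\text{(normalization)}},
\end{equation*}
which after bookkeeping reduces to a positive affine combination of $j^2/l^3$ and $k^2/L^3$; the key point is that it is \emph{not} a function of $\lambda_n=\pi^2(j^2/l^2+k^2/L^2)$ alone. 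The second step is to show that these $m$ numbers, as $(j,k)$ ranges over the distinct index pairs giving $\lambda_n$, are pairwise distinct whenever $l\neq L$. Two pairs $(j,k)$ and $(j',k')$ with $j^2/l^2+k^2/L^2=j'^2/l^2+k'^2/L^2$ and the same value of the boundary integral would force both $j^2-j'^2$ and $k^2-k'^2$ to vanish (solving the resulting $2\times2$ linear system, whose determinant is a nonzero multiple of $(l^{-3}L^{-2}-l^{-2}L^{-3})=l^{-3}L^{-3}(L-l)\neq 0$ when $l\neq L$), hence $(j,k)=(j',k')$ up to signs. Since $j,k\in\N$ are positive, this gives the pair back exactly.

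The third and final step assembles the conclusion. By Theorem~\ref{thm:main}, ordering the eigenfunctions according to \eqref{eq:hp_eigen} so that the boundary integrals $c_i:=\int_{\partial\Omega}(\partial_{\nnu}\varphi_{n+i-1})^2\ds$ are non-increasing in $i$, we have
\begin{equation*}
	\lambda_{n+i-1}^\alpha=\lambda_n-\frac{c_i}{\alpha}+o\!\left(\frac{1}{\alpha}\right)\quad\text{as }\alpha\to+\infty,\qquad i=1,\dots,m.
\end{equation*}
By Step~2 the $c_i$ are pairwise distinct, so for $i<i'$ we get $c_i>c_{i'}$ and hence
\begin{equation*}
	\lambda_{n+i'-1}^\alpha-\lambda_{n+i-1}^\alpha=\frac{c_i-c_{i'}}{\alpha}+o\!\left(\frac{1}{\alpha}\right)=\frac{c_i-c_{i'}}{\alpha}(1+o(1))>0
\end{equation*}
for $\alpha$ large enough, which shows these eigenvalues are all strictly separated for large $\alpha$; in particular each $\lambda_{n+i-1}^\alpha$ is simple for $\alpha$ large (depending on $n$). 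One must also check that the whole block $\{\lambda_{n+i-1}^\alpha\}_{i=1}^m$ stays separated from the neighbouring Dirichlet eigenvalues below $\lambda_n$ and above, but this is immediate from \eqref{eq:convergence_eigenvalues} and the fact that those converge to values strictly different from $\lambda_n$. I expect the only mildly delicate point to be the explicit evaluation and bookkeeping of the boundary integral in Step~1 (getting the normalization constants right) and verifying the non-degeneracy of the linear system in Step~2; both are elementary but must be done carefully, since the whole argument hinges on the boundary integrals distinguishing the index pairs precisely when $l\neq L$.
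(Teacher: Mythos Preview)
Your overall strategy matches the paper's, but there is a genuine gap between your Step~2 and Step~3. The quantities $c_i$ appearing in Theorem~\ref{thm:main} are the eigenvalues of the bilinear form \eqref{eq:intr_bilinear} on $E(\lambda_n)$, computed with respect to an orthonormal basis satisfying \eqref{eq:hp_eigen}, i.e.\ one that \emph{diagonalizes} that form. In Steps~1--2 you compute the \emph{diagonal entries} of this form in the product basis $\{\varphi_{j_r,k_r}\}_{r=1}^m$ and show they are pairwise distinct, but you never check that the off-diagonal entries $\int_{\partial\Omega}\partial_{\nnu}\varphi_{j_r,k_r}\,\partial_{\nnu}\varphi_{j_s,k_s}\ds$ vanish for $r\neq s$. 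Without this, the diagonal entries need not coincide with the $c_i$: for a symmetric matrix, distinct diagonal entries in some orthonormal basis do \emph{not} imply distinct eigenvalues (already in $3\times 3$ one can conjugate $\mathrm{diag}(0,0,3)$ by a rotation whose third column has entries of distinct absolute value). Since Dirichlet eigenvalues on rectangles can have multiplicity larger than~$2$, the easy $2\times 2$ case does not save you.

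The paper closes this gap by an explicit computation: for two distinct index pairs $(j,k)\neq(j',k')$ with the same Dirichlet eigenvalue one necessarily has $j\neq j'$ and $k\neq k'$, and then the cross term $\int_{\partial\Omega}\partial_{\nnu}\varphi_{j,k}\,\partial_{\nnu}\varphi_{j',k'}\ds$ splits into integrals of $\sin(k\pi y/L)\sin(k'\pi y/L)$ and $\sin(j\pi x/l)\sin(j'\pi x/l)$, each of which vanishes by orthogonality. Once this is checked, the product basis already satisfies \eqref{eq:hp_eigen}, the $c_i$ are exactly your diagonal values $4\pi^2(j_r^2/l^3+k_r^2/L^3)$, and your Step~2 (whose $2\times 2$ linear-system argument is equivalent to the paper's) together with Step~3 then goes through.
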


\subsection{Comments on the main results}\label{sec:comments}

We now briefly discuss on our main results and propose some open problems.

\begin{enumerate}
	\item First of all, we emphasize that the present paper is essentially self-contained and the proofs do not rely on any previous works. The only result which we recall from the literature is the so called \emph{Lemma on small eigenvalues} by Colin de Verdière, i.e. \Cref{lemma:CdV}. However, the proof of it is rather straightforward and, in turn, self-contained, see for instance \cite[Proposition B.1]{ALM2022}.
	\item We remark that all our asymptotic expansions (i.e. \Cref{thm:main} and \Cref{cor:eigenfunctions}) are always sharp, in the sense that
	\[
	\int_{\partial\Omega}(\partial_{\nnu}\varphi)^2\ds>0
	\]
	for any Dirichlet eigenfunction $\varphi\in H^1_0(\Omega)$. Indeed, if $\varphi\equiv\partial_{\nnu}\varphi\equiv 0$ on $\partial\Omega$, standard unique continuation results imply that $\varphi\equiv 0$ in $\Omega$ (see the proof of \Cref{lemma:equiv_norms} for further details).
	\item We point our that \eqref{ass:Omega} are the literally minimal assumptions for \eqref{eq:main_th1} to hold true for any $n\in\N$. Moreover, this emphasizes that the techniques used to prove \eqref{eq:main_th1} do not rely on regularity estimates: indeed, they are purely variational and they just need the objects to be well defined. Essentially, what we need is: discreteness of the spectrum for both the Dirichlet and the Robin Laplacian, possibility of integrating by parts (see \Cref{subsec:dive} for further details) and well-posedness of the first order term in the expansion. We also remark that the $C^{1,1}$ regularity assumption can be replaced by any other assumption which ensures that $E(\lambda_n)\sub H^2(\Omega)$, together with the corresponding elliptic estimates.  For instance, one can replace $C^{1,1}$ regularity with convexity (see e.g. \cite[Theorem 3.2.1.2]{grisvard}) or with a uniform outer ball condition, see \cite{adolfsson}.
	\item The emergence (as perturbation parameter) of a geometric quantity resembling the torsional rigidity of a set is not new. Indeed, such kind of phenomenon manifests in other instances of singularly perturbed eigenvalue problems, such as: mixed Dirichlet-Neumann boundary condition, with small Neumann part (see \cite{AO}), Neumann problems in perforated domains (see \cite{FLO}) and Aharonov-Bohm eigenvalues with moving poles (see \cite{FNOS}).
	\item We remark that the notion of $(\alpha,f)$-torsional rigidity here introduced (see \Cref{def:torsional}) seems to be new in the literature. Nevertheless, one can observe similarities with  the notion of \emph{boundary $\delta$-torsional rigidity} introduced in \cite{brasco} and with the \enquote{classical} \emph{Robin torsional rigidity}, for which we refer, among many others, to \cite{ANT,BG,CF}. In the former case, the corresponding functional is
	\begin{equation*}
		\frac{1}{2}\int_\Omega|\nabla u|^2\dx+\frac{\delta^2}{2}\int_\Omega u^2\dx-\int_{\partial\Omega}u\ds,
	\end{equation*}
	while, in the latter
	\begin{equation*}
		\frac{1}{2}\int_{\Omega}|\nabla u|^2\dx+\frac{\alpha}{2}\int_{\partial\Omega}u^2\ds-\int_\Omega u\dx.
	\end{equation*}

\end{enumerate}

We now leave here some open questions (see also \cite{BFK_robin} for other open problems).
\begin{enumerate}
		\item[(i)] Our approach is rather flexible and we believe that, by defining suitable variants of the torsional rigidity given in \Cref{def:torsional}, one could be able to tackle more general problems. For instance, one can consider an $x$-dependent coefficient in the Robin boundary conditions, i.e.
	\begin{equation*}
		\partial_{\nnu}\varphi+\beta_\alpha(x)\,\varphi=0,\quad\text{on }\partial\Omega,
	\end{equation*}
	with $\beta_\alpha\colon\partial\Omega\to\R_+$, as well as mixed Dirichlet-Robin or Neumann-Robin boundary conditions.
		\item[(ii)] Since the first term in the asymptotic expansion of the eigenvalue variation coincides with the square of the $L^2(\partial\Omega)$-norm of the normal derivative of a Dirichlet eigenfunction (see \Cref{thm:main}), it is clear that some regularity on $\Omega$ must be assumed in order to avoid for such term to be $+\infty$. One can then ask: what happens when the domain $\Omega$ is not regular enough to guarantee that
	\begin{equation*}
		\int_{\partial\Omega}(\partial_{\nnu}\varphi)^2\ds <+\infty
	\end{equation*}
	for any Dirichlet eigenfunction $\varphi\in H^1_0(\Omega)$ ? While the topic of asymptotic behavior of Robin eigenvalues in non-Lipschitz domains has already been approached in the case $\alpha\to-\infty$ (see e.g. \cite{kovarik} and references therein), it seems to be completely open in the case $\alpha\to+\infty$.
		\item[(iii)] With reference to \Cref{cor:eigenfunctions}, an interesting open question would be to understand whether \Cref{cor:simplicity} is sharp or if every Robin eigenvalue on a rectangle is simple above a certain threshold for the parameter $\alpha$ (independent of the index of the eigenvalue). In this sense, it might be useful to recall \cite[Theorem 1.2]{RW2021}, which treats the case when $\alpha\to 0^+$ (i.e. in the Neumann limit) and states that the latter situation cannot happen: namely, the authors prove that for any $\alpha>0$ there exists $\alpha'\in(0,\alpha)$ and $n\in\N$ such that $\lambda_n^{\alpha'}$ is multiple.
\end{enumerate}

\subsection{Disclaimer on Sobolev regularity and integration by parts}\label{subsec:disclaimer}
Before starting, let us make some remarks concerning the regularity of the solutions of the PDEs we deal with in the present paper. To maintain continuity, we here just state the properties and we postpone the details and the proper references to \Cref{sec:appendix}. We recall that $\Omega\sub\R^d$ is an open, bounded, Lipschitz set.

We have that the following:
\begin{itemize}
	\item eigenfunctions of the Laplacian with Dirichlet boundary conditions, i.e. satisfying \eqref{eq:dir_strong};
	\item eigenfunctions of the Laplacian with Robin boundary conditions, i.e. satisfying \eqref{eq:robin_strong};
	\item torsion functions with datum $f\in L^2(\partial\Omega)$, i.e. satisfying \eqref{eq:torsion_strong}
\end{itemize}
are functions belonging to $H^{3/2}(\Omega)$ and whose distributional Laplacian is in $L^2(\Omega)$. As a consequence, their normal derivative on $\partial\Omega$ belongs to $L^2(\partial\Omega)$ and, in particular, the Robin boundary conditions can be seen as identities in $L^2(\partial\Omega)$. Finally, for those functions the divergence theorem holds true (see \eqref{eq:A_int_by_parts}); hence, every time in which, throughout the present work, we say \enquote{by integration by parts}, we fall under the assumptions of \eqref{eq:A_int_by_parts}.

\section{Sketch of the proof}\label{sec:sketch}

The main tool we use in order to prove our main results \Cref{thm:main} and \Cref{thm:eigenfunctions} is the so called \emph{Lemma on small eigenvalues} due to Colin de Verdiére \cite{ColindeV1986}, which we now recall. In particular, we apply the following refinement of the original result, whose proof can be found in \cite[Proposition B.1]{ALM2022} (see also \cite[Lemma 7.1]{FLO} for a focus on the case of simple eigenvalue).

\begin{lemma}[Lemma on small eigenvalues]\label{lemma:CdV}
	Let $(\mathcal{H},(\cdot,\cdot)_{\mathcal{H}})$ be a real
	Hilbert space and let $\mathcal{D}\sub\mathcal{H}$ be a dense
	subspace. Let $q\colon \mathcal{D}\times\mathcal{D}\to\R$ be a
	symmetric bilinear form, such that $q$ is semi-bounded from below, that is
	\begin{equation*}
		\inf_{u\in \mathcal{H}\setminus\{0\}}\frac{q(u,u)}{\norm{u}_{\mathcal{H}}^2}>-\infty,
	\end{equation*}
	$q$ admits an increasing sequence of eigenvalues
	$\{\nu_i\}_{i\in\N}$, 
	and there exists an orthonormal basis $\{g_i\}_{i\in\N}$ of
	$\mathcal H$ such that $g_i\in\mathcal D$ is an eigenvector of $q$
	associated to the eigenvalue $\nu_i$, that is $q(g_i,v)=\nu_i(g_i,v)_{\mathcal{H}}$
	for all $i\geq1$ and $v\in \mathcal D$.	Let $m\in \N\setminus\{0\}$ and $F\sub \mathcal{D}$ be
	a $m$-dimensional subspace of $\mathcal D$.	Assume that there exist $n\in\N$ and
	$\gamma>0$ such that
	\begin{itemize}
		\item[\rm (H1)] $\nu_i\leq -\gamma$ for $i\leq n-1$,
		$|\nu_i|\leq \gamma$ for $i=n,\dots,n+m-1$, and 
		$\nu_i\geq \gamma$ for $i\geq n+m$;
		\item[\rm (H2)]
		$\delta:=\sup\{|q(u,v)|\colon u\in \mathcal{D},~v\in
		F,~\norm{u}_{\mathcal{H}}=\norm{v}_{\mathcal{H}}=1\}<\gamma/\sqrt{2}$.
	\end{itemize}
	If $\{\xi_i\}_{i=1,\dots,m}$ are the eigenvalues (in
	ascending order) of $q$ restricted to $F$, i.e. 
	\begin{equation*}
		\xi_i:=\min_{\substack{G\sub F \\ \dim G=i}}\max_{\substack{u\in G \\ u\neq 0}}\frac{q(u,u)}{\norm{u}_{\mathcal{H}}^2},
	\end{equation*}
	 then we have
	\begin{equation}\label{eq:CdV_th1}
		|\nu_{n+i-1}-\xi_i|\leq\frac{4\delta^2}{\gamma}\quad\text{for all }i=1,\dots,m.
	\end{equation}
	Moreover, if
	$\Pi\colon\mathcal{D}\to
	\mathrm{span}\,\{g_n,\dots,g_{n+m-1}\}$ denotes the
	orthogonal projection onto the subspace of
	$\mathcal D$  spanned by $\{g_n,\dots,g_{n+m-1}\}$,
	we have
	\begin{equation*}
		\frac{\norm{v-\Pi v}_{\mathcal{H}}}{\norm{v}_{\mathcal{H}}}\leq \frac{\sqrt{2}\delta}{\gamma}\quad\text{for all }v\in F.
	\end{equation*}
\end{lemma}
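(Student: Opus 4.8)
The plan is to derive the lemma from elementary spectral geometry of the self-adjoint operator $T$ associated with the (symmetric) form $q$ in $\mathcal{H}$, combined with the min-max principle. Let $\{g_i\}_{i\in\N}$ be the given orthonormal eigenbasis and split $\mathcal{H}=\mathcal{H}_-\oplus G\oplus\mathcal{H}_+$, where $\mathcal{H}_-:=\overline{\Span}\{g_i:i\le n-1\}$, $G:=\Span\{g_n,\dots,g_{n+m-1}\}$ and $\mathcal{H}_+:=\overline{\Span}\{g_i:i\ge n+m\}$ are the $T$-invariant subspaces on which, by (H1), the spectrum of $T$ lies in $(-\infty,-\gamma]$, in $[-\gamma,\gamma]$ and in $[\gamma,+\infty)$, respectively; write $P_-,\Pi,P_+$ for the corresponding $\mathcal{H}$-orthogonal projections, so that $\Pi$ is exactly the projection in the statement. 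The first step is to observe that hypothesis (H2) says precisely that $F\sub D(T)$ and $\|Tv\|_{\mathcal{H}}\le\delta\|v\|_{\mathcal{H}}$ for every $v\in F$: indeed for $v\in F$ the functional $q(\cdot,v)$ is then bounded on the dense subspace $(\mathcal{D},\|\cdot\|_{\mathcal{H}})$ with norm $\le\delta\|v\|_{\mathcal{H}}$, hence extends to $\mathcal{H}$ and is represented by a vector of that norm, which one identifies with $Tv$.

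The core of the argument is a quantitative quasi-orthogonality estimate for vectors of $F$. Fix $v\in F$. Since $Tv=TP_-v+T\Pi v+TP_+v$ is an $\mathcal{H}$-orthogonal decomposition and $T$ acts on $\mathcal{H}_{\pm}$ by factors of absolute value $\ge\gamma$, we obtain at once
\[
\|P_-v\|_{\mathcal{H}}^2+\|P_+v\|_{\mathcal{H}}^2\le\frac{\|TP_-v\|_{\mathcal{H}}^2+\|TP_+v\|_{\mathcal{H}}^2}{\gamma^2}\le\frac{\|Tv\|_{\mathcal{H}}^2}{\gamma^2}\le\frac{\delta^2}{\gamma^2}\,\|v\|_{\mathcal{H}}^2,
\]
and, using $q(P_{\pm}v,P_{\pm}v)=(TP_{\pm}v,P_{\pm}v)_{\mathcal{H}}$ and $\|P_{\pm}v\|_{\mathcal{H}}\le\gamma^{-1}\|TP_{\pm}v\|_{\mathcal{H}}$,
\[
\bigl|q(P_-v,P_-v)\bigr|+q(P_+v,P_+v)\le\frac{\|TP_-v\|_{\mathcal{H}}^2+\|TP_+v\|_{\mathcal{H}}^2}{\gamma}\le\frac{\delta^2}{\gamma}\,\|v\|_{\mathcal{H}}^2,
\]
with moreover $q(P_-v,P_-v)\le0\le q(P_+v,P_+v)$. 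The first display is already the \enquote{Moreover} part of the lemma (in fact with constant $1$ in place of $\sqrt2$), and since $\delta<\gamma/\sqrt2$ it gives $\|\Pi v\|_{\mathcal{H}}^2\ge(1-\delta^2/\gamma^2)\|v\|_{\mathcal{H}}^2>0$; hence $\Pi|_F\colon F\to G$ is injective, and therefore an isomorphism since $\dim F=\dim G=m$.

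It remains to prove \eqref{eq:CdV_th1}, and the idea is to transport the min-max levels of $q$ between $F$ and $G$ along the isomorphism $\Pi|_F$. The eigenvalues of $q$ restricted to $G$ are exactly $\nu_n\le\dots\le\nu_{n+m-1}$, so the $i$-th of them is $\nu_{n+i-1}$. To bound $\xi_i$ from above one tests the min-max for $\xi_i$ on the $i$-dimensional subspace $V:=(\Pi|_F)^{-1}\bigl(\Span\{g_n,\dots,g_{n+i-1}\}\bigr)$; for the reverse inequality one tests the min-max for $\nu_{n+i-1}$ on $W:=\Pi\bigl(\Span\{h_1,\dots,h_i\}\bigr)$, where $\{h_j\}$ are the eigenfunctions of $q|_F$. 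In both cases one uses the orthogonal splitting $q(v,v)=q(\Pi v,\Pi v)+q(P_-v,P_-v)+q(P_+v,P_+v)$ (the cross terms vanish since the three pieces lie on disjoint parts of the eigenbasis), the bound $q(\Pi v,\Pi v)\le\nu_{n+i-1}\|\Pi v\|_{\mathcal{H}}^2$ on $V$ (resp.\ $q(v,v)\le\xi_i\|v\|_{\mathcal{H}}^2$ on $W$), the two estimates above, the bounds $|\nu_{n+i-1}|\le\gamma$ and $|\xi_i|\le\delta$, and $\|\Pi v\|_{\mathcal{H}}^2\ge(1-\delta^2/\gamma^2)\|v\|_{\mathcal{H}}^2$; distinguishing the sign of $\nu_{n+i-1}$ (resp.\ of $\xi_i+\delta^2/\gamma$) one is led to $\xi_i\le\nu_{n+i-1}+2\delta^2/\gamma$ and $\nu_{n+i-1}\le\xi_i+4\delta^2/\gamma$, which together give \eqref{eq:CdV_th1}.

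The main obstacle is precisely this last piece of bookkeeping: one must reach the clean constant $4\delta^2/\gamma$, not a quantity that degenerates as $\delta\uparrow\gamma/\sqrt2$, and the key is to work throughout with the \emph{summed} inequalities above — $\|P_-v\|_{\mathcal{H}}^2+\|P_+v\|_{\mathcal{H}}^2$ and $|q(P_-v,P_-v)|+q(P_+v,P_+v)$ controlled by $\gamma^{-2}\|Tv\|_{\mathcal{H}}^2$ and $\gamma^{-1}\|Tv\|_{\mathcal{H}}^2$ — rather than estimating each piece separately; a short computation then shows that $\delta\le\tfrac34\gamma$, and a fortiori $\delta<\gamma/\sqrt2$, is exactly what renders the factor $\|v\|_{\mathcal{H}}^2/\|\Pi v\|_{\mathcal{H}}^2$ harmless in the reverse inequality. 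A minor technical point: since $\mathcal{D}$ is only assumed dense, one should read everything in the form domain of $q$ (equivalently, enlarge $\mathcal{D}$ to that domain at the outset), so that $P_{\pm}v\in D(T)$ and all the manipulations are legitimate. For a fully detailed write-up I would follow \cite[Proposition B.1]{ALM2022}.
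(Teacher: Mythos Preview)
The paper does not give its own proof of this lemma: it is recalled from the literature, with the reference \cite[Proposition B.1]{ALM2022} (and Colin de Verdi\`ere's original paper) explicitly indicated, so there is no in-paper argument to compare against. Your sketch is the standard one and is essentially what the cited reference does: interpret (H2) as the bound $\|Tv\|_{\mathcal{H}}\le\delta\|v\|_{\mathcal{H}}$ on $F$, use the spectral decomposition $\mathcal{H}_-\oplus G\oplus\mathcal{H}_+$ to get the projection estimate $\|v-\Pi v\|_{\mathcal{H}}\le(\delta/\gamma)\|v\|_{\mathcal{H}}$, deduce that $\Pi|_F\colon F\to G$ is an isomorphism, and then transport min--max levels between $F$ and $G$. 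Your observation that the projection estimate actually holds with constant $1$ rather than $\sqrt{2}$ is correct and harmless. The only points worth tightening in a full write-up are the ones you already flag: the bookkeeping for the factor $\|v\|_{\mathcal{H}}^2/\|\Pi v\|_{\mathcal{H}}^2$ in the reverse inequality (your remark that $\delta<\gamma/\sqrt{2}$, hence $\delta<3\gamma/4$, suffices is right), and the passage from the dense subspace $\mathcal{D}$ to the form domain so that the spectral projections of elements of $F$ make sense.
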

This is an abstract result which provides a quantitative estimate on the \enquote{small eigenvalues} $\{\nu_{n+i-1}\}_{i=1,\dots,m}$ of the bilinear form $q$; hence, in order to apply it in our setting, we first need to properly (and concretely) define all the objects appearing there. We start by choosing the functional spaces and the quadratic form, i.e. we let
\begin{equation*}
	\mathcal{H}:=L^2(\Omega),\qquad \mathcal{D}:=H^1(\Omega)
\end{equation*}
and, for $\alpha>0$ and $n\in\N$ fixed, we let
\begin{equation}\label{eq:q_intr}
	q(u,v):=\int_\Omega\nabla u\cdot\nabla v\dx+\alpha\int_{\partial\Omega}uv\ds-\lambda_n\int_\Omega uv\dx,
\end{equation}
so that we have $\nu_i:=\lambda_i^\alpha-\lambda_n$ for all $i\in\N$. With this choice, if $m=\dim E(\lambda_n)$ is the multiplicity of $\lambda_n$, \Cref{lemma:CdV} allows then to provide quantitative estimates for the \enquote{small eigenvalues}
\begin{equation*}
	\nu_{n+i-1}=\lambda_{n+i-1}^\alpha-\lambda_n\quad\text{for }i=1,\dots,m.
\end{equation*}
The request of smallness of the eigenvalues is encoded in assumption \textit{(H1)}, which is then satisfied, in view of \eqref{eq:convergence_eigenvalues}, with 
	\begin{equation*}
	\gamma:=\frac{1}{2}\min\left\{\lambda_n-\lambda_{n-1},\lambda_{n+m}-\lambda_n \right\}
\end{equation*}
when $n\geq 2$ and 
\begin{equation*}
	\gamma:=\frac{1}{2}(\lambda_2-\lambda_1)
\end{equation*}
when $n=1$, by choosing $\alpha$ sufficiently large (depending on $d$, $\Omega$ and $n$). On the other hand, assumption \textit{(H2)} is closely related with the choice of the \enquote{approximating} eigenspace $F$, which is then crucial. Let us now have a deeper look at this point and, for sake of simplicity, let us assume that $\lambda_n$ is simple, so that $F=\mathrm{Span}\,\{\Phi_\alpha\}$, for some $\Phi_\alpha\in H^1(\Omega)$. In order to have the best estimate for the eigenvalue variation, in view of \eqref{eq:CdV_th1} we want $\delta$ to be as small as possible: hence, let us try to minimize it by suitably choosing $\Phi_\alpha$. By integration by parts, we have
\begin{align*}
	\delta:&=\sup_{u\in H^1(\Omega)}\abs{\int_\Omega\nabla u\cdot\nabla \Phi_\alpha\dx+\alpha\int_{\partial\Omega}u\Phi_\alpha\ds-\lambda_n\int_{\Omega}u\Phi_\alpha\dx}\norm{\Phi_\alpha}_{L^2(\Omega)}^{-1}\norm{u}_{L^2(\Omega)}^{-1} \\
	&=\sup_{u\in H^1(\Omega)}\abs{\int_\Omega u(-\Delta \Phi_\alpha-\lambda_n \Phi_\alpha)\dx+\alpha\int_{\partial\Omega}u\,(\partial_{\nnu}\Phi_\alpha+\alpha \Phi_\alpha)\ds}\norm{\Phi_\alpha}_{L^2(\Omega)}^{-1}\norm{u}_{L^2(\Omega)}^{-1}.
\end{align*}
Therefore, it is clear that for $\Phi_\alpha$ to be the best choice (so that $\delta=0$), it should satisfy
\begin{equation}\label{eq:plan_1}
	\begin{bvp}
		-\Delta\Phi_\alpha &=\lambda_n \Phi_\alpha, &&\text{in }\Omega, \\
		\partial_{\nnu}\Phi_\alpha +\alpha\Phi_\alpha &=0, &&\text{on }\partial\Omega.
	\end{bvp}
\end{equation}
However, this is not possible: indeed, since $\alpha\mapsto \lambda_n^\alpha$ is a strictly increasing function (see \eqref{eq:fili_4}), we have that $\lambda_n$ cannot belong to the spectrum of the Robin Laplacian, for $\alpha$ sufficiently large. Hence, the best choice turns into finding a function $\Phi_\alpha$ which \enquote{almost} satisfies \eqref{eq:plan_1}, with the least possible error. A reasonable formulation of such a question is the following
\begin{equation*}
	\min\left\{\frac{1}{2}\int_\Omega|\nabla (\varphi_n-v)|^2+\frac{\alpha}{2}\int_{\partial\Omega}v^2\ds+\int_{\partial\Omega}v\,\partial_{\nnu}\varphi_n\ds\colon v\in H^1(\Omega)\right\}
\end{equation*}
Indeed, if $\Phi_\alpha$ denotes the minimizer of the problem above, then, on one hand, the first term of the functional forces $\Phi_\alpha$ to be close in norm to $\varphi_n$ (eigenfunction of $\lambda_n$), while, on the other hand, the boundary terms force $\Phi_\alpha$ to satisfy the homogeneous Robin boundary condition, i.e.
\begin{equation}\label{eq:plan_2}
	\begin{bvp}
		-\Delta \Phi_\alpha &=\lambda_n\varphi_n, &&\text{in }\Omega, \\
		\partial_{\nnu}\Phi_\alpha+\alpha\Phi_\alpha &=0, &&\text{on }\partial\Omega,
	\end{bvp}
\end{equation}
thus making it a good approximating solution of \eqref{eq:plan_1}. If we now let $U_\alpha:=\varphi_n-\Phi_\alpha$, we have that $U_\alpha$ minimizes
\begin{equation*}
	\min\left\{ \frac{1}{2}\int_\Omega|\nabla u|^2\dx+\frac{\alpha}{2}\int_{\partial\Omega}u^2\ds-\int_{\partial\Omega}u\,\partial_{\nnu}\varphi_n\ds\colon u\in H^1(\Omega) \right\},
\end{equation*}
thus coinciding with the unique function achieving $T_\alpha(\partial\Omega,\partial_{\nnu}\varphi_n)$, i.e. $U_\alpha=U_{\partial\Omega,\alpha,\partial_{\nnu}\varphi_n}$. Let us now investigate which estimate we are able to obtain from \eqref{eq:CdV_th1} by choosing $F=\mathrm{Span}\,\{\Phi_\alpha\}$ and $\Phi_\alpha$ as above. First, by direct computations and Cauchy-Schwarz inequality, we can estimate $\delta$ as follows
\begin{equation*}
	\delta\leq \lambda_n\frac{\norm{U_\alpha}_{L^2(\Omega)}}{\norm{\varphi_n-U_\alpha}_{L^2(\Omega)}}\leq \lambda_n\frac{\norm{U_\alpha}_{L^2(\Omega)}}{1-\norm{U_\alpha}_{L^2(\Omega)}}.
\end{equation*}
Second, for what concerns the family $\{\xi_i\}_{i=1,\dots,m}$, since we are assuming $\lambda_n$ to be simple (i.e. $m=1$), in view of the equation satisfied by $\Phi_\alpha$, \eqref{eq:plan_2}, we can easily compute the restriction to $F$ of the quadratic form $q$, which gives that
\begin{equation*}
	\xi_1:=\frac{q(\Phi_\alpha,\Phi_\alpha)}{\norm{\Phi_\alpha}_{L^2(\Omega)}^2}=-\left(T_\alpha(\partial\Omega,\partial_{\nnu}\varphi_n)+\lambda_n\norm{U_\alpha}_{L^2(\Omega)}^2\right)\norm{\varphi_n-U_\alpha}_{L^2(\Omega)}^{-2}.
\end{equation*}
Summing up, from \eqref{eq:CdV_th1} we deduce that
\begin{equation}\label{eq:plan_3}
	\abs{\lambda_n^\alpha-\lambda_n+\left(T_\alpha(\partial\Omega,\partial_{\nnu}\varphi_n)+\lambda_n\norm{U_\alpha}_{L^2(\Omega)}^2\right)\norm{\varphi_n-U_\alpha}_{L^2(\Omega)}^{-2}}\leq \frac{4\lambda_n^{2}}{\gamma}\frac{\norm{U_\alpha}_{L^2(\Omega)}^2}{\left(1-\norm{U_\alpha}_{L^2(\Omega)}\right)^2},
\end{equation}
for $\alpha$ sufficiently large. It is now clear that the detection of the first order expansion of the eigenvalue variation $\lambda_n-\lambda_n^\alpha$, as $\alpha\to +\infty$, boils down to a careful asymptotic analysis of
\begin{equation*}
	T_\alpha(\partial\Omega,\partial_{\nnu}\varphi_n)\quad\text{and}\quad \norm{U_\alpha}_{L^2(\Omega)},
\end{equation*}
as $\alpha\to +\infty$, which is the core technical part of the present work. In particular, we have
\begin{equation}\label{eq:plan_4}
	\norm{U_\alpha}_{L^2(\Omega)}^2=o(T_\alpha(\partial\Omega,\partial_{\nnu}\varphi_n))
\end{equation}
and
\begin{equation}\label{eq:plan_5}
	T_\alpha(\partial\Omega,\partial_{\nnu}\varphi_n)=\frac{1}{\alpha}\int_{\partial\Omega}(\partial_{\nnu}\varphi_n)^2\ds+o\left(\frac{1}{\alpha}\right),
\end{equation}
as $\alpha\to+\infty$, see \Cref{lemma:L2_weak} and \eqref{eq:bound_th2} respectively (where we deal with the more general case of multiple eigenvalues). We also point out that the asymptotic analysis of these two quantities is closely related with estimating the remainder terms $\omega_n(\alpha)$ and $\rho_n(\alpha)$, defined in \eqref{eq:def_omega} and \eqref{eq:def_rho}, respectively; however, this will be more clear later on in the paper, see \eqref{eq:main_1}. Combining these two facts \eqref{eq:plan_4} and \eqref{eq:plan_5} with \eqref{eq:plan_3}, we obtain the validity of \Cref{thm:main}. Indeed, from \eqref{eq:plan_4} and \eqref{eq:plan_5} (and normalization of $\varphi_n$) we obtain
	\begin{equation*}
		\norm{U_\alpha}_{L^2(\Omega)}^2=o\left(\frac{1}{\alpha}\right)\quad\text{and}\quad\norm{\varphi_n-U_\alpha}_{L^2(\Omega)}^2=1+\norm{U_\alpha}_{L^2(\Omega)}^2-2(\varphi_n,U_\alpha)_{L^2(\Omega)}=1+o(1),
	\end{equation*}
	as $\alpha\to+\infty$. Plugging this and \eqref{eq:plan_5} into \eqref{eq:plan_3} yields
	\begin{equation*}
		\abs{\lambda_n^\alpha-\lambda_n+\left(\frac{1}{\alpha}\int_{\partial\Omega}(\partial_{\nnu}\varphi_n)^2\ds+o\left(\frac{1}{\alpha}\right)\right)(1+o(1))}=o\left(\frac{1}{\alpha}\right)\quad\text{as }\alpha\to+\infty,
	\end{equation*}
	thus implying \Cref{thm:main} in case of simple eigenvalues.

\section{\texorpdfstring{Preliminaries on $T_\alpha(\partial\Omega,f)$}{Preliminaries of the boundary torsional rigidity}}

In the present section, we state some preliminary results concerning the $(\alpha,f)$-torsional rigidity of $\partial\Omega$, introduced in \Cref{def:torsional}, with a particular focus on its asymptotic behavior as $\alpha\to+\infty$. 

Starting with the basics, we investigate existence and uniqueness of the minimizer and the equation which satisfies, after recalling the following standard result (see e.g. \cite[Lemma 5.22]{Lieberman}) 
\begin{lemma}[Coercivity]\label{lemma:coerc}
	There exists $C_{\textup{coerc}}>0$ such that
	\begin{equation*}
		\int_\Omega u^2\dx\leq C_{\textup{coerc}}\left(\int_{\Omega}|\nabla u|^2\dx+\int_{\partial\Omega} u^2\ds\right)\quad\text{for all }u\in H^1(\Omega).
	\end{equation*}
\end{lemma}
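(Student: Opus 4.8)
The plan is to prove this Poincaré-type inequality by the standard compactness--contradiction argument. Suppose no such constant exists; then for each $k\in\N$ one can pick $u_k\in H^1(\Omega)$ with $\int_\Omega u_k^2\dx=1$ and
\[
\int_\Omega|\nabla u_k|^2\dx+\int_{\partial\Omega}u_k^2\ds<\frac1k .
\]
In particular $\{u_k\}$ is bounded in $H^1(\Omega)$, so after extracting a subsequence I may assume $u_k\weak u$ weakly in $H^1(\Omega)$ for some $u\in H^1(\Omega)$.

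Next I would use the two compactness properties available on a bounded Lipschitz set: the Rellich--Kondrachov embedding $H^1(\Omega)\hookrightarrow L^2(\Omega)$ is compact, and the trace operator $H^1(\Omega)\to L^2(\partial\Omega)$ is compact. Along the subsequence this yields $u_k\to u$ strongly in $L^2(\Omega)$ and $u_k\restr{\partial\Omega}\to u\restr{\partial\Omega}$ strongly in $L^2(\partial\Omega)$. Passing to the limit gives $\int_\Omega u^2\dx=1$ (so $u\not\equiv 0$) and $\int_{\partial\Omega}u^2\ds=0$, while weak lower semicontinuity of the Dirichlet integral gives $\int_\Omega|\nabla u|^2\dx\le\liminf_k\int_\Omega|\nabla u_k|^2\dx=0$.

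The concluding step is to derive a contradiction from $\nabla u=0$: this forces $u$ to be constant on each connected component of $\Omega$, and since $\Omega$ is bounded, every such component $\omega$ is a bounded open set whose boundary satisfies $\emptyset\neq\partial\omega\sub\partial\Omega$ and has positive $\mathcal{H}^{d-1}$-measure. As the trace of $u$ vanishes $\mathcal{H}^{d-1}$-almost everywhere on $\partial\Omega$, the constant value of $u$ on $\omega$ must be $0$; hence $u\equiv 0$ in $\Omega$, contradicting $\int_\Omega u^2\dx=1$. The only points deserving a word of care --- compactness of the trace map on a merely Lipschitz domain, and the fact that each connected component of $\Omega$ genuinely meets $\partial\Omega$ --- are classical, so there is no real obstacle here; the statement is entirely standard.
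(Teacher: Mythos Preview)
Your argument is correct and is precisely the standard compactness--contradiction proof of this Poincar\'e-type inequality. The paper does not give its own proof of this lemma at all: it simply quotes the inequality as a classical fact, with a reference to \cite[Lemma 5.22]{Lieberman}, so there is nothing to compare against beyond noting that your self-contained proof supplies what the paper leaves to the literature.
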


We are now able to obtain the following.
\begin{lemma}\label{lemma:minimizer}
	Let $\alpha>0$ and $f\in L^2(\partial\Omega)$. There exists a unique $U_{\partial\Omega,\alpha,f}\in H^1(\Omega)$ achieving
\begin{equation*}
		T_\alpha(\partial\Omega,f)=-2\inf\left\{ \frac{1}{2}\int_{\Omega}|\nabla u|^2\dx+\frac{\alpha}{2}\int_{\partial\Omega}u^2\ds-\int_{\partial\Omega}fu\ds\colon u\in H^1(\Omega) \right\}
	\end{equation*}
	which weakly solves
	\begin{equation*}
		\begin{bvp}
			-\Delta U_{\partial\Omega,\alpha,f}&=0,&&\text{in }\Omega, \\
			\partial_{\nnu}U_{\partial\Omega,\alpha,f}+\alpha U_{\partial\Omega,\alpha,f}&=f,&&\text{on }\partial\Omega,
		\end{bvp}
	\end{equation*}
	in the sense that
	\begin{equation}\label{eq:minimizer_th2}
		\int_\Omega\nabla U_{\partial\Omega,\alpha,f}\cdot\nabla v\dx+\alpha\int_{\partial\Omega}U_{\partial\Omega,\alpha,f}v\ds=\int_{\partial\Omega}fv\ds\quad\text{for all }v\in H^1(\Omega).
	\end{equation}
	Moreover, the map $f\mapsto U_{\partial\Omega,\alpha,f}$ is linear and we have that
	\begin{equation}
		T_\alpha(\partial\Omega,f)=\int_\Omega|\nabla U_{\partial\Omega,\alpha,f}|^2\dx+\alpha\int_{\partial\Omega}|U_{\partial\Omega,\alpha,f}|^2=\int_{\partial\Omega}f U_{\partial\Omega,\alpha,f}\ds \label{eq:minimizer_th1}
	\end{equation}
	and that the following equivalent formulation holds
	\begin{equation}
		T_\alpha(\partial\Omega,f)=\max\left\{\frac{\displaystyle\left(\int_{\partial\Omega}fu\ds\right)^2}{\displaystyle\int_\Omega|\nabla u|^2\dx+\alpha\int_{\partial\Omega}u^2\ds}\colon u\in H^1(\Omega)\setminus\{0\}\right\}. \label{eq:equiv_torsional}
	\end{equation}
\end{lemma}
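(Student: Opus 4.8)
The plan is to work with the functional
\[
\mathcal{J}(u):=\frac12\int_\Omega|\nabla u|^2\dx+\frac\alpha2\int_{\partial\Omega}u^2\ds-\int_{\partial\Omega}fu\ds,\qquad u\in H^1(\Omega),
\]
so that $T_\alpha(\partial\Omega,f)=-2\inf_{H^1(\Omega)}\mathcal{J}$, together with the associated symmetric bilinear form $a(u,v):=\int_\Omega\nabla u\cdot\nabla v\dx+\alpha\int_{\partial\Omega}uv\ds$. First I would record the two structural facts needed. Continuity of $a$ on $H^1(\Omega)\times H^1(\Omega)$ follows from Cauchy--Schwarz and the continuity of the trace operator $H^1(\Omega)\to L^2(\partial\Omega)$ (available since $\Omega$ is Lipschitz); the same trace continuity makes $v\mapsto\int_{\partial\Omega}fv\ds$ a bounded linear functional on $H^1(\Omega)$ because $f\in L^2(\partial\Omega)$. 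Coercivity of $a$ on the full space $H^1(\Omega)$ is where \Cref{lemma:coerc} enters: from $a(u,u)\ge\min\{1,\alpha\}\bigl(\int_\Omega|\nabla u|^2\dx+\int_{\partial\Omega}u^2\ds\bigr)$ and \Cref{lemma:coerc} one deduces $a(u,u)\ge c_{\alpha,\Omega}\norm{u}_{H^1(\Omega)}^2$ for some $c_{\alpha,\Omega}>0$ depending only on $\alpha$ and $\Omega$. By Lax--Milgram there is then a unique $U:=U_{\partial\Omega,\alpha,f}\in H^1(\Omega)$ with $a(U,v)=\int_{\partial\Omega}fv\ds$ for all $v\in H^1(\Omega)$, which is exactly the weak formulation \eqref{eq:minimizer_th2}.

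Next I would identify $U$ with the minimizer and read off linearity and the energy identity. Writing a generic competitor as $U+w$ and expanding, $\mathcal{J}(U+w)=\mathcal{J}(U)+\bigl(a(U,w)-\int_{\partial\Omega}fw\ds\bigr)+\frac12 a(w,w)=\mathcal{J}(U)+\frac12 a(w,w)$, and coercivity gives $a(w,w)>0$ unless $w=0$; hence $U$ is the unique minimizer of $\mathcal{J}$, i.e.\ the unique function achieving $T_\alpha(\partial\Omega,f)$. Linearity of $f\mapsto U_{\partial\Omega,\alpha,f}$ is then immediate from uniqueness plus linearity of the right-hand side of \eqref{eq:minimizer_th2} in $f$. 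Testing \eqref{eq:minimizer_th2} with $v=U$ yields $\int_\Omega|\nabla U|^2\dx+\alpha\int_{\partial\Omega}U^2\ds=\int_{\partial\Omega}fU\ds$, and substituting $U$ into the definition of $T_\alpha$ gives $T_\alpha(\partial\Omega,f)=-2\mathcal{J}(U)=-a(U,U)+2\int_{\partial\Omega}fU\ds=\int_{\partial\Omega}fU\ds=a(U,U)$, which is \eqref{eq:minimizer_th1}.

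Finally, for the dual formulation \eqref{eq:equiv_torsional} I would exploit the one-dimensional optimization along rays. For fixed $u\in H^1(\Omega)\setminus\{0\}$ and $t\in\R$ one has $-2\mathcal{J}(tu)=2t\int_{\partial\Omega}fu\ds-t^2a(u,u)\le T_\alpha(\partial\Omega,f)$; maximizing the left-hand side over $t$ (the maximum is attained at $t=\int_{\partial\Omega}fu\ds/a(u,u)$, since $a(u,u)>0$) produces $\bigl(\int_{\partial\Omega}fu\ds\bigr)^2/a(u,u)\le T_\alpha(\partial\Omega,f)$ for every such $u$, and choosing $u=U$ and invoking \eqref{eq:minimizer_th1} shows the bound is an equality; the degenerate case $f\equiv0$ forces $U\equiv0$ by \eqref{eq:minimizer_th2}, and both sides vanish. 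I do not expect a genuine obstacle here: the lemma is of ``standard variational methods'' type, and the only two points deserving a line of care are the verification that $a$ is coercive on all of $H^1(\Omega)$ — for which \Cref{lemma:coerc} is precisely the tool — and the observation that the constants produced, hence the whole construction, depend on $\alpha$, which is harmless as $\alpha$ is fixed throughout.
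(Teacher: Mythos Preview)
Your proposal is correct and follows essentially the same approach as the paper: both set up the bilinear form $a(u,v)$, invoke \Cref{lemma:coerc} and the trace embedding to verify coercivity and continuity, apply Lax--Milgram, test \eqref{eq:minimizer_th2} with $U$ itself to obtain \eqref{eq:minimizer_th1}, and derive \eqref{eq:equiv_torsional} by optimizing over scalar multiples $tu$. The only cosmetic difference is that you spell out the identification of the Lax--Milgram solution with the minimizer via the expansion $\mathcal{J}(U+w)=\mathcal{J}(U)+\tfrac12 a(w,w)$, which the paper leaves implicit.
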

\begin{proof}
	Let us consider the bilinear form
	\begin{equation*}
		a(u,v):=\int_\Omega\nabla u\cdot\nabla v\dx+\alpha\int_{\partial\Omega}uv\ds\quad\text{defined for }u,v\in H^1(\Omega).
	\end{equation*}
	In view of \Cref{lemma:coerc} and of the trace embedding $H^1(\Omega)\hookrightarrow L^2(\partial \Omega)$, $a(\cdot,\cdot)$ is continuous and coercive and the functional
	\begin{equation*}
		L_f(u):=\int_{\partial\Omega}fu\ds\quad\text{defined for }u\in H^1(\Omega)
	\end{equation*}
	is linear and continuous. Hence, by Lax-Milgram Lemma we conclude the first part of the proof, while linearity of the map $f\mapsto U_{\partial\Omega,\alpha,f}$ is a trivial consequence. Second, \eqref{eq:minimizer_th1} can be obtained by testing \eqref{eq:minimizer_th2} with $U_{\partial\Omega,\alpha,f}$, while \eqref{eq:equiv_torsional} follows from the fact that
	\begin{equation*}
		T_\alpha(\partial\Omega,f)=-2\min_{u\in H^1(\Omega)}\min_{t\in\R}\left\{ \frac{t^2}{2}\int_\Omega|\nabla u|^2\dx+\frac{\alpha t^2}{2}\int_{\partial\Omega}u^2\ds-t\int_{\partial\Omega}fu\ds \right\}
	\end{equation*}
	and that the inner functional is minimized, for $u\neq 0$, at 
	\[
		t= \left(\int_{\partial\Omega}fu\ds\right) \left(\int_{\Omega}|\nabla u|^2\dx+\alpha\int_{\partial\Omega}u^2\ds\right)^{-1}.
	\]
	In fact, if $f\neq 0$ then necessarily $U_{\partial\Omega,\alpha,f}\neq 0$; indeed, if $U_{\partial\Omega,\alpha,f}= 0$, then by \eqref{eq:minimizer_th2} we would have that
	\begin{equation*}
		\int_{\partial\Omega}fv\ds=0\quad\text{for all }v\in H^1(\Omega),
	\end{equation*}	
	which would imply that $f=0$. This completes the proof.
\end{proof}

Even though it is not needed for the proof of the main theorems of the present paper, in the following we prove $C^1$ regularity of the map $\alpha\mapsto T_\alpha(\partial\Omega,f)$ and we explicitly compute its first derivative, which leads to a monotonicity property. We think this is of independent interest and lies as a basic result in possible further investigations of $T_\alpha(\partial\Omega,f)$ as a geometric quantity.

\begin{lemma}\label{lemma:monotonicity}
	For any $f\in L^2(\partial\Omega)$, the function
	\begin{align*}
		(0,+\infty)\ &\to H^1(\Omega), \\
		\alpha&\mapsto U_{\partial\Omega,\alpha,f}
	\end{align*}
	is continuous in $(0,+\infty)$. Moreover, the functions $\alpha\mapsto T_\alpha(\partial\Omega,f)$ and $\alpha\mapsto \alpha \,T_\alpha(\partial\Omega,f)$ are of class $C^1$ and satisfy
	\begin{equation}\label{eq:monotoniciy_th1}
		\frac{\d}{\d\alpha}T_\alpha(\partial\Omega,f)=-\int_{\partial\Omega}U_{\partial\Omega,\alpha,f}^2\ds\leq 0\quad\text{and}\quad \frac{\d}{\d\alpha}(\alpha \,T_\alpha(\partial\Omega,f)) = \int_\Omega |\nabla U_{\partial\Omega,\alpha,f}|^2\dx\geq 0
	\end{equation}
\end{lemma}
\begin{proof}
	For sake of simplicity, in this proof we denote $U_\alpha:=U_{\partial\Omega,\alpha,f}$ and $T_\alpha:=T_\alpha(\partial\Omega,f)$. First of all, we prove continuity in $H^1(\Omega)$ of the map $\alpha\mapsto U_\alpha$. For fixed $\alpha>0$, we consider $\beta>0$ arbitrarily close to $\alpha$. If we let $V_\beta:=U_\beta-U_\alpha$, since it weakly satisfies
	\begin{equation*}
		\begin{bvp}
			-\Delta V_\beta&=0,&&\text{in }\Omega, \\
			\partial_{\nnu} V_\beta+\beta V_\beta&=(\alpha-\beta)U_\alpha, &&\text{on }\partial\Omega,
		\end{bvp}
	\end{equation*}
	by testing with $V_\beta$ itself, we deduce that
	\begin{equation*}
		\int_\Omega|\nabla V_\beta|^2\dx+\beta\int_{\partial\Omega}V_\beta^2\ds=(\alpha-\beta)\int_{\partial\Omega}U_\alpha V_\beta\ds.
	\end{equation*}
	Then, combining this with \Cref{lemma:coerc} we have that
	\begin{equation*}
		\norm{V_\beta}_{H^1(\Omega)}^2\leq C_\beta\left(\int_\Omega|\nabla V_\beta|^2\dx+\beta\int_{\partial\Omega}V_\beta^2\ds\right)= C_	\beta(\alpha-\beta)\int_{\partial\Omega}U_\alpha V_\beta\ds,
	\end{equation*}
	where
	\begin{equation*}
		C_\beta:=\begin{cases}
			\beta^{-1}(C_{\tu{coerc}}+1),&\text{if }\beta\in(0,1], \\
			C_{\tu{coerc}}+1,&\text{if }\beta\in(1,+\infty)
		\end{cases}
	\end{equation*}
	and $C_{\tu{coerc}}>0$ is as in \Cref{lemma:coerc}. Being $\beta$ arbitrarily close to $\alpha>0$, we can actually assume that $C_\beta\leq C_\alpha$, for some $C_\alpha>0$ depending on $d$, $\Omega$ and $\alpha$. Now, by applying the trace inequality, we easily obtain that
	\begin{equation*}
		\norm{U_\beta-U_\alpha}_{H^1(\Omega)}=\norm{V_\beta}_{H^1(\Omega)}\leq C_\alpha|\alpha-\beta|\norm{U_\alpha}_{L^2(\partial\Omega)},
	\end{equation*}
	for some other $C_\alpha>0$ (again depending only on $d$, $\Omega$ and $\alpha$) from which (letting $\beta\to\alpha$) we deduce the desired continuity. Let us now prove that $T_\alpha$ is differentiable and \eqref{eq:monotoniciy_th1} holds.  For any $\alpha,\beta>0$ (sufficiently close to each other), if we test with $U_\beta\in H^1(\Omega)$ the equation satisfied by $U_\alpha$ (and we use \eqref{eq:minimizer_th1}) we get
	\begin{equation*}
		\int_{\Omega}\nabla U_\beta\cdot\nabla U_\alpha\dx+\beta\int_{\partial\Omega}U_\beta U_\alpha=\int_{\partial\Omega}fU_\alpha=T_\alpha.
	\end{equation*}
	Analogously, if we test with $U_\alpha\in H^1(\Omega)$ the equation satisfied by $U_\beta$ (and we again use \eqref{eq:minimizer_th1}) we get
	\begin{equation*}
		\int_{\Omega}\nabla U_\alpha\cdot\nabla U_\beta\dx+\alpha\int_{\partial\Omega}U_\alpha U_\beta=\int_{\partial\Omega}fU_\beta=T_\beta.
	\end{equation*}
	By taking the difference, we obtain
	\begin{equation*}
		\frac{T_\alpha-T_\beta}{\alpha-\beta}=-\int_{\partial\Omega}U_\alpha U_\beta\ds.
	\end{equation*}
	Then, taking the limit as $\beta\to\alpha$, in view of the continuity result of the first part, we derive that
	\begin{equation*}
		\frac{\d}{\d\alpha}T_\alpha=-\int_{\partial\Omega}U_\alpha^2\ds.
	\end{equation*}
	Therefore
	\begin{equation*}
		\frac{\d}{\d\alpha}(\alpha T_\alpha)=T_\alpha+\alpha \frac{\d}{\d\alpha}T_\alpha=\int_{\Omega}|\nabla U_\alpha|^2\dx+\alpha\int_{\partial\Omega}U_\alpha^2\ds+\alpha \frac{\d}{\d\alpha} T_\alpha=\int_{\Omega}|\nabla U_\alpha|^2\dx,
	\end{equation*}
	thus concluding the proof.
	
\end{proof}

The following result contains explicit estimates for the $(\alpha,f)$-torsional rigidity of $\partial\Omega$ and its minimizer, which provide, as a consequence, their sharp asymptotic behavior as $\alpha\to+\infty$.
\begin{lemma}\label{lemma:bound}
	Let $\alpha>0$ and $f\in L^2(\partial\Omega)$. Then
	\begin{equation}\label{eq:bound_th1}
		\frac{\displaystyle\left(\int_{\partial\Omega}f\ds\right)^2}{\mathcal{H}^{d-1}(\partial\Omega)}\leq  \alpha T_\alpha(\partial\Omega,f)\leq \norm{f}_{L^2(\partial\Omega)}^2.
	\end{equation}
	In addition, if $f\in  H^{1/2}(\partial\Omega)\setminus\{0\}$ we have that
	\begin{equation}\label{eq:bound_th2}
		\frac{\displaystyle \alpha\norm{f}_{L^2(\partial\Omega)}^4}{\displaystyle \norm{\nabla V_f}_{L^2(\Omega)}^2+\alpha\norm{f}_{L^2(\partial\Omega)}^2}\leq \alpha T_\alpha(\partial\Omega,f)\leq \norm{f}_{L^2(\partial\Omega)}^2,
	\end{equation}
	where $V_f\in H^1(\Omega)$ denotes the harmonic extension of $f$ in $\Omega$. Moreover, if we let $u_\alpha:=\alpha U_{\partial\Omega,\alpha,f}$ then we have that
	\begin{equation}\label{eq:bound_th4}
		\norm{u_\alpha-f}_{L^2(\partial\Omega)}^2\leq\frac{4\norm{f}_{L^2(\partial\Omega)}^{\frac{2}{3}}\norm{\nabla V_f}_{L^2(\Omega)}^{\frac{4}{3}}}{\alpha^{\frac{2}{3}}}.
	\end{equation}
\end{lemma}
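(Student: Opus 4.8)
All four estimates will be consequences of the two variational facts collected in \Cref{lemma:minimizer}: the dual characterization \eqref{eq:equiv_torsional} of $T_\alpha(\partial\Omega,f)$ and the identities \eqref{eq:minimizer_th1} satisfied by the minimizer $U:=U_{\partial\Omega,\alpha,f}$; in particular no regularity of $\partial\Omega$ beyond Lipschitz is needed. I would first deal with the upper bound, which is common to \eqref{eq:bound_th1} and \eqref{eq:bound_th2}. Applying the Cauchy--Schwarz inequality in $L^2(\partial\Omega)$ to the identity $T_\alpha(\partial\Omega,f)=\int_{\partial\Omega}fU\ds$ gives $T_\alpha(\partial\Omega,f)\le\norm{f}_{L^2(\partial\Omega)}\norm{U}_{L^2(\partial\Omega)}$, while from $T_\alpha(\partial\Omega,f)=\norm{\nabla U}_{L^2(\Omega)}^2+\alpha\norm{U}_{L^2(\partial\Omega)}^2$ one reads off $\norm{U}_{L^2(\partial\Omega)}\le\big(T_\alpha(\partial\Omega,f)/\alpha\big)^{1/2}$. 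Combining the two and squaring yields $\alpha T_\alpha(\partial\Omega,f)\le\norm{f}_{L^2(\partial\Omega)}^2$ (trivially true if $T_\alpha(\partial\Omega,f)=0$).

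For the lower bounds I would simply plug explicit competitors into \eqref{eq:equiv_torsional}. The constant function $u\equiv1$ has vanishing gradient and $\int_{\partial\Omega}u^2\ds=\mathcal H^{d-1}(\partial\Omega)$, so it immediately gives $\alpha T_\alpha(\partial\Omega,f)\ge\big(\int_{\partial\Omega}f\ds\big)^2/\mathcal H^{d-1}(\partial\Omega)$, completing \eqref{eq:bound_th1}. If moreover $f\in H^{1/2}(\partial\Omega)\setminus\{0\}$, then its harmonic extension $U_f\in H^1(\Omega)\setminus\{0\}$ is admissible in \eqref{eq:equiv_torsional}, and since $U_f=f$ on $\partial\Omega$ one has $\int_{\partial\Omega}fU_f\ds=\norm{f}_{L^2(\partial\Omega)}^2=\int_{\partial\Omega}U_f^2\ds$; substituting these into \eqref{eq:equiv_torsional} yields exactly the lower bound in \eqref{eq:bound_th2}.

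It remains to prove \eqref{eq:bound_th4}. Setting $u_\alpha=\alpha U$ and expanding the square, the relations $\int_{\partial\Omega}fU\ds=T_\alpha(\partial\Omega,f)$ and $\alpha\int_{\partial\Omega}U^2\ds=T_\alpha(\partial\Omega,f)-\norm{\nabla U}_{L^2(\Omega)}^2$ (both from \eqref{eq:minimizer_th1}) turn a one-line computation into the exact identity
\[
\norm{u_\alpha-f}_{L^2(\partial\Omega)}^2=\norm{f}_{L^2(\partial\Omega)}^2-\alpha\,T_\alpha(\partial\Omega,f)-\alpha\norm{\nabla U}_{L^2(\Omega)}^2 .
\]
Dropping the last, nonnegative, term and inserting the lower bound for $\alpha T_\alpha(\partial\Omega,f)$ from the just-proved \eqref{eq:bound_th2}, the right-hand side collapses to $\norm{f}_{L^2(\partial\Omega)}^2\,\norm{\nabla U_f}_{L^2(\Omega)}^2\big/\big(\norm{\nabla U_f}_{L^2(\Omega)}^2+\alpha\norm{f}_{L^2(\partial\Omega)}^2\big)$, and bounding the denominator from below by $2\sqrt\alpha\,\norm{f}_{L^2(\partial\Omega)}\norm{\nabla U_f}_{L^2(\Omega)}$ via the arithmetic--geometric mean inequality produces \eqref{eq:bound_th4} (indeed with the sharper constant $\tfrac12$ in place of $2$). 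Alternatively, one can keep the identity $\norm{f}_{L^2(\partial\Omega)}^2-\alpha T_\alpha(\partial\Omega,f)=\int_\Omega\nabla U\cdot\nabla U_f\dx$ obtained by testing \eqref{eq:minimizer_th2} with $U_f$, and combine Cauchy--Schwarz with $\norm{\nabla U}_{L^2(\Omega)}^2\le T_\alpha(\partial\Omega,f)\le\norm{f}_{L^2(\partial\Omega)}^2/\alpha$.

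There is no real obstacle in this lemma: once \Cref{lemma:minimizer} is available the whole proof is elementary bookkeeping together with two well-chosen test functions. The only place asking for a little care is the identity for $\norm{u_\alpha-f}_{L^2(\partial\Omega)}^2$, which must be derived purely from the weak formulation \eqref{eq:minimizer_th2} tested against $U$ itself; one should resist writing $u_\alpha-f=-\partial_{\nnu}U$ on $\partial\Omega$ as a pointwise identity, since that would presuppose $U\in H^2(\Omega)$, which is unavailable for a merely Lipschitz domain.
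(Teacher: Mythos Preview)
Your proof is correct and follows essentially the same route as the paper: the upper bound via Cauchy--Schwarz, the lower bounds by testing \eqref{eq:equiv_torsional} with a constant and with the harmonic extension $U_f$, and \eqref{eq:bound_th4} by combining the identities in \eqref{eq:minimizer_th1} with the weak formulation tested against $U_f$. The only notable difference is in the bookkeeping for \eqref{eq:bound_th4}: the paper splits $\norm{u_\alpha-f}_{L^2(\partial\Omega)}^2=\big(\norm{u_\alpha}^2-\norm{f}^2\big)+2\int_{\partial\Omega}f(f-u_\alpha)\ds$, drops the first bracket via the upper bound, and handles the second via the test with $U_f$; your primary route instead isolates the exact identity $\norm{u_\alpha-f}_{L^2(\partial\Omega)}^2=\norm{f}^2-\alpha T_\alpha-\alpha\norm{\nabla U}_{L^2(\Omega)}^2$ and then feeds in the lower bound \eqref{eq:bound_th2}, which is arguably cleaner and, as you observe, yields the sharper constant $\tfrac12$ (your ``alternative'' is essentially the paper's argument).
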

\begin{proof}
	The left bound in \eqref{eq:bound_th1} can be obtained by testing \eqref{eq:equiv_torsional} with a constant function, while the right one follows by applying Cauchy-Schwarz inequality in the numerator of \eqref{eq:equiv_torsional}. Similarly, the left bound in \eqref{eq:bound_th2} comes by testing \eqref{eq:equiv_torsional} with the harmonic extension  $V_f\in H^1(\Omega)$ of $f$. 
	Finally, in order to prove \eqref{eq:bound_th4}, we first observe that rephrasing the right bound of \eqref{eq:bound_th2} in view of \eqref{eq:minimizer_th1} yields
	\begin{equation}\label{eq:bound_2}
		\frac{1}{\alpha}\int_\Omega|\nabla u_\alpha|^2\dx+\int_{\partial\Omega}u_\alpha^2\ds\leq \int_{\partial\Omega}f^2\ds.
	\end{equation}
	Taking into account the the previous inequality and testing with $V_f$ the equation satisfied by $u_\alpha$, we obtain
	\begin{align*}
		\int_{\partial\Omega}(u_\alpha-f)^2\ds&=\int_{\partial\Omega}u_\alpha^2\ds+\int_{\partial\Omega}f^2\ds-2\int_{\partial\Omega}u_\alpha f\ds \notag\\
		&=\int_{\partial\Omega}u_\alpha^2\ds-\int_{\partial\Omega}f^2\ds+2\int_{\partial\Omega}f(f-u_\alpha)\ds \notag\\
		&\leq \frac{2}{\alpha}\int_\Omega\nabla u_\alpha\cdot\nabla V_f\dx\leq \frac{2}{\alpha}\left(\int_\Omega|\nabla u_\alpha|^2\dx\right)^{1/2}\left(\int_\Omega|\nabla V_f|^2\dx\right)^{1/2},
	\end{align*}
	where in the last step we used Cauchy-Schwarz inequality. Combining the previous estimate with the following
	\begin{align*}
		\norm{\nabla u_\alpha}_{L^2(\Omega)}\leq \sqrt{\alpha(\norm{f}_{L^2(\partial\Omega)}^2-\norm{u_\alpha}_{L^2(\partial\Omega)}^2)}&\leq\sqrt{2\alpha\norm{f}_{L^2(\partial\Omega)}}\sqrt{\norm{f}_{L^2(\partial\Omega)}-\norm{u_\alpha}_{L^2(\partial\Omega)}}\\
		&\leq \sqrt{2\alpha\norm{f}_{L^2(\partial\Omega)}}\sqrt{\norm{f-u_\alpha}_{L^2(\partial\Omega)}},
	\end{align*}
	which is again a straightforward consequence of \eqref{eq:bound_2}, we deduce that
	\begin{equation*}
		\norm{u_\alpha-f}_{L^2(\partial\Omega)}^2\leq \frac{2\norm{\nabla V_f}_{L^2(\Omega)}}{\alpha}\sqrt{2\alpha\norm{f}_{L^2(\partial\Omega)}}\sqrt{\norm{f-u_\alpha}_{L^2(\partial\Omega)}}
	\end{equation*}
	Rearranging the terms then leads to \eqref{eq:bound_th4} and concludes the proof.
	
\end{proof}

The quantitative estimates contained in \Cref{sec:quantitative} need the control of the behavior of the torsion function $U_{\partial\Omega,\alpha,f}$ in the case in which also the function $f$ is varying with $\alpha$. Therefore, we now prove a series of three convergence results, in increasing order of regularity. We start with the $L^2(\partial\Omega)$ case.

\begin{lemma}[First order expansion in $L^2(\partial\Omega)$]\label{lemma:weak}
	Let $\{f_\alpha\}_{\alpha>0}\sub L^2(\partial\Omega)$ and $f\in L^2(\partial\Omega)$ be such that
	\begin{equation}\label{eq:weak_hp1}
		f_\alpha\weak f\quad\text{weakly in }L^2(\partial\Omega),~\text{as }\alpha\to+\infty.
	\end{equation}
	Then
	\begin{equation}\label{eq:weak_th1}
		\alpha U_{\partial\Omega,\alpha,f_\alpha}\weak f\quad\text{weakly in }L^2(\partial\Omega),~\text{as }\alpha\to+\infty.
	\end{equation}
	Furthermore, if 
	\begin{equation}\label{eq:weak_hp2}
		f_\alpha\to f\quad\text{strongly in }L^2(\partial\Omega),~\text{as }\alpha\to+\infty,
	\end{equation}
	then
	\begin{equation}\label{eq:weak_th2}
		\alpha\, T_\alpha(\partial\Omega,f_\alpha)\to \int_{\partial\Omega}f^2\ds,\quad\text{as }\alpha\to+\infty.
	\end{equation}
\end{lemma}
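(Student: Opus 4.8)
The plan is to work directly from the variational characterization \eqref{eq:minimizer_th2} of the minimizer $U_\alpha := U_{\partial\Omega,\alpha,f_\alpha}$, together with the uniform bound furnished by \Cref{lemma:bound}. First I would set $u_\alpha := \alpha U_\alpha$, which by \eqref{eq:minimizer_th2} weakly solves
\begin{equation*}
	\int_\Omega \nabla u_\alpha\cdot\nabla v\dx + \alpha\int_{\partial\Omega} u_\alpha v\ds = \alpha\int_{\partial\Omega} f_\alpha v\ds\quad\text{for all }v\in H^1(\Omega),
\end{equation*}
and, rescaling the right-hand inequality of \eqref{eq:bound_th1} (which holds with $f_\alpha$ in place of $f$), one gets $\int_{\partial\Omega} u_\alpha^2\ds \le \norm{f_\alpha}_{L^2(\partial\Omega)}^2$, which is bounded since $f_\alpha$ converges weakly, hence is bounded, in $L^2(\partial\Omega)$. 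Likewise $\tfrac1\alpha\int_\Omega|\nabla u_\alpha|^2\dx \le \norm{f_\alpha}_{L^2(\partial\Omega)}^2$, so $\int_\Omega |\nabla u_\alpha|^2\dx = O(\alpha)$. Thus, up to a subsequence, $u_\alpha \weak g$ weakly in $L^2(\partial\Omega)$ for some $g\in L^2(\partial\Omega)$; the goal is to identify $g = f$.

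To identify the limit I would exploit the equation with carefully chosen test functions. Testing with a fixed $v\in H^1(\Omega)$ and dividing by $\alpha$ gives $\tfrac1\alpha\int_\Omega\nabla u_\alpha\cdot\nabla v\dx + \int_{\partial\Omega} u_\alpha v\ds = \int_{\partial\Omega} f_\alpha v\ds$; since $\int_\Omega|\nabla u_\alpha|^2\dx = O(\alpha)$, Cauchy–Schwarz shows the first term is $O(\alpha^{-1/2})\to 0$, while the boundary terms pass to the limit by weak convergence of $u_\alpha$ and $f_\alpha$, yielding $\int_{\partial\Omega} g v\ds = \int_{\partial\Omega} f v\ds$ for every $v\in H^1(\Omega)$. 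Since traces of $H^1(\Omega)$ functions are dense in $L^2(\partial\Omega)$, this forces $g=f$, and by uniqueness of the limit the whole family converges, proving \eqref{eq:weak_th1}.

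For the second part, under the strong hypothesis \eqref{eq:weak_hp2}, I would use the energy identity $\alpha T_\alpha(\partial\Omega,f_\alpha) = \alpha\int_{\partial\Omega} f_\alpha U_\alpha\ds = \int_{\partial\Omega} f_\alpha u_\alpha\ds$, which follows from \eqref{eq:minimizer_th1}. The right-hand side is a pairing of the strongly convergent sequence $f_\alpha \to f$ in $L^2(\partial\Omega)$ with the weakly convergent sequence $u_\alpha \weak f$ (from the first part), hence converges to $\int_{\partial\Omega} f^2\ds$, which is \eqref{eq:weak_th2}. The main obstacle — really the only subtle point — is the identification of the weak limit: one must be careful that the gradient term, although it blows up like $\alpha$ in the $L^2(\Omega)$ norm, contributes a term of order $\alpha^{-1/2}$ once the equation is tested against a fixed function and divided by $\alpha$, so it genuinely vanishes in the limit; everything else is routine weak–strong pairing and density of traces.
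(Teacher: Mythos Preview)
Your argument is correct and follows essentially the same route as the paper: both derive the boundedness $\int_{\partial\Omega}u_\alpha^2\ds\le\norm{f_\alpha}_{L^2(\partial\Omega)}^2$ and $\int_\Omega|\nabla u_\alpha|^2\dx=O(\alpha)$ from \eqref{eq:bound_th1}, then test the equation against a fixed $v\in H^1(\Omega)$ to show the gradient term contributes $O(\alpha^{-1/2})$, identify the weak limit via density of traces, and finish with the weak--strong pairing $\alpha T_\alpha=\int_{\partial\Omega}f_\alpha u_\alpha\ds$. The only cosmetic difference is that the paper argues directly that $\int_{\partial\Omega}(u_\alpha-f)v\ds\to 0$ for every $v$ (using boundedness plus density) rather than first extracting a subsequence and then invoking Urysohn, but the content is the same.
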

\begin{proof}
	For sake of simplicity, in this proof we denote $v_\alpha:=\sqrt{\alpha}U_{\partial\Omega,\alpha,f_\alpha}$ and $u_\alpha:=\alpha U_{\partial\Omega,\alpha,f_\alpha}$. First of all, we observe that, thanks to the right bound of \eqref{eq:bound_th1} and \eqref{eq:minimizer_th1}, we have
	\begin{equation*}
		\int_\Omega|\nabla v_\alpha|^2\dx+\alpha\int_{\partial\Omega}v_\alpha^2\ds\leq \int_{\partial\Omega}f_\alpha^2\ds\quad\text{for all }\alpha>0.
	\end{equation*}
	Now, in view of the equation satisfied by $u_\alpha$ (see \eqref{eq:minimizer_th2}), Cauchy-Schwarz inequality and the previous estimate, we have
	\begin{align*}
		\abs{\int_{\partial\Omega}(u_\alpha-f)v\ds}&=\abs{\int_{\partial\Omega}(f_\alpha-f)v\ds-\frac{1}{\alpha}\int_\Omega\nabla u_\alpha\cdot\nabla v\dx }\\
		&=\abs{\int_{\partial\Omega}(f_\alpha-f)v\ds-\frac{1}{\sqrt{\alpha}}\int_\Omega\nabla v_\alpha\cdot\nabla v\dx} \\
		&\leq \abs{\int_{\partial\Omega}(f_\alpha-f)v\ds }+\frac{1}{\sqrt{\alpha}} \norm{f_\alpha}_{L^2(\partial\Omega)}\norm{\nabla v}_{L^2(\Omega)}
	\end{align*}
	for all $v\in H^1(\Omega)$. Hence, in view of \eqref{eq:weak_hp1} we have \eqref{eq:weak_th1}. Finally, since
	\begin{equation*}
		\alpha\, T_\alpha(\partial\Omega,f_\alpha)=\int_{\partial\Omega}u_\alpha f_\alpha\ds,
	\end{equation*}
	in view of the \eqref{eq:weak_hp2} and \eqref{eq:weak_th1}, we have \eqref{eq:weak_th2}, thus concluding the proof. 
\end{proof}

We now further assume $H^{1/2}(\partial\Omega)$ convergence, which leads to a stronger convergence of the torsion function.
\begin{proposition}[First order expansion in $H^1(\Omega)$]\label{prop:conv_torsion}
	Let $\{f_\alpha\}_{\alpha>0}\sub H^{1/2}(\partial\Omega)$ and $f\in H^{1/2}(\partial\Omega)$ be such that
	\begin{equation*}
		f_\alpha\weak f\quad\text{weakly in }H^{1/2}(\partial\Omega)~\text{as }\alpha\to+\infty.
	\end{equation*}
	Then
	\begin{equation*}
		\alpha U_{\partial\Omega,\alpha,f_\alpha}\weak V_f\quad\text{weakly in }H^1(\Omega)~\text{as }\alpha\to+\infty,
	\end{equation*}
	where $V_f$ is the harmonic extension of $f$ in $\Omega$.
\end{proposition}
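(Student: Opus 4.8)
The plan is to work with the rescaled functions $u_\alpha := \alpha\, U_{\partial\Omega,\alpha,f_\alpha}$ and to show that $u_\alpha \weak U_f$ weakly in $H^1(\Omega)$. First I would establish that $\{u_\alpha\}$ is bounded in $H^1(\Omega)$. From the right bound in \eqref{eq:bound_th2} (equivalently \eqref{eq:bound_2}), applied with $f$ replaced by $f_\alpha$, we get
\begin{equation*}
	\frac{1}{\alpha}\int_\Omega|\nabla u_\alpha|^2\dx+\int_{\partial\Omega}u_\alpha^2\ds\leq \int_{\partial\Omega}f_\alpha^2\ds,
\end{equation*}
so $\int_{\partial\Omega}u_\alpha^2\ds$ is bounded (since $f_\alpha$ is bounded in $L^2(\partial\Omega)$, being weakly convergent in $H^{1/2}(\partial\Omega)$ which embeds into $L^2(\partial\Omega)$). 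To control $\nabla u_\alpha$, I would test the equation \eqref{eq:minimizer_th2} satisfied by $u_\alpha$ — namely $\int_\Omega\nabla u_\alpha\cdot\nabla v\dx+\alpha\int_{\partial\Omega}u_\alpha v\ds=\alpha\int_{\partial\Omega}f_\alpha v\ds$ — with $v = U_{f_\alpha}$, the harmonic extension of $f_\alpha$. Since $U_{f_\alpha}$ is harmonic with boundary datum $f_\alpha$, the left side becomes $\int_\Omega\nabla u_\alpha\cdot\nabla U_{f_\alpha}\dx + \alpha\int_{\partial\Omega}u_\alpha f_\alpha\ds$, and on the right we have $\alpha\int_{\partial\Omega}f_\alpha^2\ds$; rearranging and using that $u_\alpha$ is harmonic too (so $\int_\Omega\nabla u_\alpha\cdot\nabla U_{f_\alpha}\dx=\int_\Omega|\nabla u_\alpha|^2\dx$ once we note $u_\alpha - U_{f_\alpha}\in H^1_0$... ) — actually more cleanly: write $u_\alpha = U_{f_\alpha} + w_\alpha$ with $w_\alpha\in H^1_0(\Omega)$, harmonic. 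Then $\int_\Omega|\nabla u_\alpha|^2 = \int_\Omega|\nabla U_{f_\alpha}|^2 + \int_\Omega|\nabla w_\alpha|^2$, and the boundary term $\alpha\int_{\partial\Omega}u_\alpha^2\ds\geq 0$ together with the identity \eqref{eq:minimizer_th1}, $\alpha\int_{\partial\Omega}f_\alpha u_\alpha\ds=\int_\Omega|\nabla u_\alpha|^2\dx+\alpha\int_{\partial\Omega}u_\alpha^2\ds$, gives $\int_\Omega|\nabla u_\alpha|^2\dx\leq \alpha\int_{\partial\Omega}f_\alpha u_\alpha\ds$. Combining with Cauchy--Schwarz and the already-established bound $\int_{\partial\Omega}u_\alpha^2\ds\leq\int_{\partial\Omega}f_\alpha^2\ds$ yields $\int_\Omega|\nabla u_\alpha|^2\dx\leq \alpha\|f_\alpha\|_{L^2(\partial\Omega)}^2$, which is the wrong power. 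So the gradient bound genuinely requires using $H^{1/2}$ regularity: testing with $v=U_{f_\alpha}$ as above gives $\int_\Omega|\nabla u_\alpha|^2\dx+\alpha\int_{\partial\Omega}u_\alpha^2\ds = \int_\Omega\nabla u_\alpha\cdot\nabla U_{f_\alpha}\dx+\alpha\int_{\partial\Omega}u_\alpha f_\alpha\ds$, hence $\int_\Omega|\nabla u_\alpha|^2\dx \leq \int_\Omega\nabla u_\alpha\cdot\nabla U_{f_\alpha}\dx$ (dropping the nonnegative difference of boundary terms isn't automatic — but $\alpha\int_{\partial\Omega}(u_\alpha^2 - u_\alpha f_\alpha)\ds$ can be absorbed using \eqref{eq:bound_2}), ultimately $\int_\Omega|\nabla u_\alpha|^2\dx\leq \|\nabla u_\alpha\|_{L^2}\|\nabla U_{f_\alpha}\|_{L^2} + \text{l.o.t.}$, giving $\|\nabla u_\alpha\|_{L^2(\Omega)}\leq C\|\nabla U_{f_\alpha}\|_{L^2(\Omega)} + C$. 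Since $f_\alpha$ is bounded in $H^{1/2}(\partial\Omega)$ and the harmonic extension map $H^{1/2}(\partial\Omega)\to H^1(\Omega)$ is continuous, $\|\nabla U_{f_\alpha}\|_{L^2(\Omega)}$ is bounded, so $\{u_\alpha\}$ is bounded in $H^1(\Omega)$.

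Next, up to a subsequence $u_\alpha\weak u_*$ weakly in $H^1(\Omega)$ for some $u_*\in H^1(\Omega)$, and I would identify $u_* = U_f$. Passing to the limit in $\int_\Omega\nabla u_\alpha\cdot\nabla\psi\dx=0$ for $\psi\in C_c^\infty(\Omega)$ shows $u_*$ is harmonic in $\Omega$. For the boundary datum: by \eqref{eq:bound_2}, $\alpha\int_{\partial\Omega}u_\alpha^2\ds\leq \alpha\|f_\alpha\|_{L^2(\partial\Omega)}^2$ — not helpful directly. Instead use \eqref{eq:bound_th4} (with $f\rightsquigarrow f_\alpha$): $\|u_\alpha - f_\alpha\|_{L^2(\partial\Omega)}^2\leq 2\|f_\alpha\|_{L^2(\partial\Omega)}\|\nabla U_{f_\alpha}\|_{L^2(\Omega)}/\sqrt{\alpha}\to 0$. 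Hence $u_\alpha - f_\alpha\to 0$ strongly in $L^2(\partial\Omega)$, and since $f_\alpha\weak f$ in $H^{1/2}(\partial\Omega)$ (hence weakly in $L^2(\partial\Omega)$), we get $u_\alpha|_{\partial\Omega}\weak f$ weakly in $L^2(\partial\Omega)$. On the other hand, the trace operator $H^1(\Omega)\to L^2(\partial\Omega)$ is weakly continuous, so $u_\alpha|_{\partial\Omega}\weak u_*|_{\partial\Omega}$; by uniqueness of weak limits, $u_*|_{\partial\Omega}=f$. Thus $u_*$ is the harmonic function in $\Omega$ with trace $f$, i.e. $u_* = U_f$. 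Since the limit is independent of the subsequence, the whole family converges: $u_\alpha\weak U_f$ weakly in $H^1(\Omega)$.

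The main obstacle is the $H^1$ a priori bound: the crude estimates from \Cref{lemma:bound} only give $\|\nabla u_\alpha\|_{L^2(\Omega)}=O(\sqrt\alpha)$, which is useless, and one must exploit the $H^{1/2}$ regularity of $f_\alpha$ via a careful test with the harmonic extension $U_{f_\alpha}$ to see that $\nabla u_\alpha$ is actually comparable to $\nabla U_{f_\alpha}$ and hence bounded. Once this uniform $H^1$ bound is in hand, the rest is a standard weak-compactness-plus-identification-of-the-limit argument, where the boundary trace is pinned down through the strong $L^2(\partial\Omega)$ estimate \eqref{eq:bound_th4} combined with weak continuity of the trace map. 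A minor technical point to handle carefully is that \eqref{eq:bound_th2} and \eqref{eq:bound_th4} are stated for a fixed $f\in H^{1/2}(\partial\Omega)$, so I would first remark that they apply verbatim with $f$ replaced by $f_\alpha$ (with constants depending only on $d$ and $\Omega$), the relevant quantities $\|f_\alpha\|_{L^2(\partial\Omega)}$ and $\|\nabla U_{f_\alpha}\|_{L^2(\Omega)}$ then being uniformly bounded thanks to the weak $H^{1/2}$ convergence of $f_\alpha$.
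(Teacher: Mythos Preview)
Your overall structure---establish a uniform $H^1$ bound on $u_\alpha:=\alpha U_{\partial\Omega,\alpha,f_\alpha}$, extract a weakly convergent subsequence, identify the limit as harmonic with trace $f$ via \eqref{eq:bound_th4}, and conclude by Urysohn---matches the paper's proof exactly. The difference lies in how the gradient bound is obtained, and here your write-up contains errors even though the underlying idea is sound (and in fact more elementary than the paper's).

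The paper bounds $\|\nabla u_\alpha\|_{L^2(\Omega)}$ via the normal-trace estimate for $H_{\textup{div}}(\Omega)$: since $\Delta u_\alpha=0$, one has $\|\partial_{\nnu}u_\alpha\|_{H^{-1/2}(\partial\Omega)}\leq C\|\nabla u_\alpha\|_{L^2(\Omega)}$, and then integration by parts together with the boundary condition $u_\alpha=f_\alpha-\alpha^{-1}\partial_{\nnu}u_\alpha$ gives $\int_\Omega|\nabla u_\alpha|^2\leq \langle \partial_{\nnu}u_\alpha,f_\alpha\rangle\leq C\|f_\alpha\|_{H^{1/2}(\partial\Omega)}\|\nabla u_\alpha\|_{L^2(\Omega)}$.

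Your test-with-$U_{f_\alpha}$ idea avoids the $H^{-1/2}$ machinery entirely, but as written it is wrong: the decomposition $u_\alpha=U_{f_\alpha}+w_\alpha$ with $w_\alpha\in H^1_0(\Omega)$ is false (the traces differ), and the displayed identity you attribute to ``testing with $v=U_{f_\alpha}$'' is not what that test produces. The clean execution is: testing the weak equation for $u_\alpha$ with $v=u_\alpha$ gives $\int_\Omega|\nabla u_\alpha|^2+\alpha\int_{\partial\Omega}u_\alpha^2=\alpha\int_{\partial\Omega}f_\alpha u_\alpha$, while testing with $v=U_{f_\alpha}$ gives $\int_\Omega\nabla u_\alpha\cdot\nabla U_{f_\alpha}+\alpha\int_{\partial\Omega}u_\alpha f_\alpha=\alpha\int_{\partial\Omega}f_\alpha^2$. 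Subtracting yields
\[
\int_\Omega|\nabla u_\alpha|^2-\int_\Omega\nabla u_\alpha\cdot\nabla U_{f_\alpha}=-\alpha\int_{\partial\Omega}(u_\alpha-f_\alpha)^2\leq 0,
\]
so Cauchy--Schwarz gives $\|\nabla u_\alpha\|_{L^2(\Omega)}\leq \|\nabla U_{f_\alpha}\|_{L^2(\Omega)}$, bounded by continuity of the harmonic extension on $H^{1/2}$. Finally, you should say explicitly that the $L^2(\Omega)$ bound on $u_\alpha$ (needed for the full $H^1$ bound) follows from \Cref{lemma:coerc} once $\|\nabla u_\alpha\|_{L^2(\Omega)}$ and $\|u_\alpha\|_{L^2(\partial\Omega)}$ are controlled; the paper does this step too.
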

\begin{proof}
	In the whole proof, we denote by $C>0$ a positive constant depending only on $d$ and $\Omega$, which may change from time to time. We also denote $u_\alpha:=\alpha U_{\partial\Omega,\alpha,f_\alpha}$, which satisfies
	\begin{equation}\label{eq:conv_1}
		\begin{bvp}
			-\Delta u_\alpha&=0, &&\text{in }\Omega, \\
			\frac{1}{\alpha}\partial_{\nnu}u_\alpha+u_\alpha&=f_\alpha, &&\text{on }\partial\Omega,
		\end{bvp}
	\end{equation}
	in a weak sense. We now recall that the space
	\begin{equation*}
		H_{\textup{div}}(\Omega):=\{\bm{u}\in L^1_{\textup{loc}}(\Omega)^{d}\colon\norm{\bm{u}}_{H_{\textup{div}}(\Omega)}:=\norm{\bm{u}}_{L^2(\Omega)}+\norm{\dive\bm{u}}_{L^2(\Omega)}<+\infty \}
	\end{equation*}
	is continuously embedded into $H^{-1/2}(\partial\Omega)$, which implies that there exists a constant $C_{\text{div}}>0$, depending only on $d$ and $\Omega$, such that
	\begin{equation}\label{eq:conv_2}
		\norm{\partial_{\nnu}u_\alpha}_{H^{-1/2}(\partial\Omega)}\leq C_{\textup{div}}\left(\norm{\nabla u_\alpha}_{L^2(\Omega)}+\norm{\Delta u_\alpha}_{L^2(\Omega)}\right)=C_{\textup{div}}\norm{\nabla u_\alpha}_{L^2(\Omega)}
	\end{equation}
	for all $\alpha>0$. Hence, by integration by parts, the boundary conditions in \eqref{eq:conv_1}, Cauchy-Schwarz inequality and \eqref{eq:conv_2}, we have that
	\begin{equation*}
		\int_\Omega|\nabla u_\alpha|^2\dx=\int_{\partial\Omega}u_\alpha\partial_{\nnu}u_\alpha\ds \leq \int_{\partial\Omega}f_\alpha\partial_{\nnu}u_\alpha\ds \leq C_{\textup{div}}\norm{f_\alpha}_{H^{1/2}(\partial\Omega)}\norm{\nabla u_\alpha}_{L^2(\Omega)},
	\end{equation*}
	from which we directly deduce that
	\begin{equation}\label{eq:conv_3}
		\norm{\nabla u_\alpha}_{L^2(\Omega)}\leq C_{\textup{div}}\norm{f_\alpha}_{H^{1/2}(\partial\Omega)}\quad\text{for all }\alpha>0.
	\end{equation}
	Moreover, from \Cref{lemma:coerc} and \eqref{eq:bound_th4}, we obtain that
	\begin{align*}
		\norm{u_\alpha}_{L^2(\Omega)}^2 &\leq 2\norm{u_\alpha-V_{f_\alpha}}_{L^2(\Omega)}^2+2\norm{V_{f_\alpha}}_{L^2(\Omega)}^2 \\
		&\leq 2C_{\textup{coerc}}\left(\norm{\nabla(u_\alpha-V_{f_\alpha})}_{L^2(\Omega)}^2+\norm{u_\alpha-V_{f_\alpha}}_{L^2(\partial\Omega)}^2\right)+2\norm{V_{f_\alpha}}_{L^2(\Omega)}^2 \\
		&\leq C\left( \norm{\nabla u_\alpha}_{L^2(\Omega)}^2+\norm{V_{f_\alpha}}_{H^1(\Omega)}^2+\norm{f_\alpha}_{H^{1/2}(\partial\Omega)}^2 \right),
	\end{align*}
	for all $\alpha\geq 1$. Combining the previous inequality with \eqref{eq:conv_3} and with standard estimates for the harmonic extension (see \Cref{subsec:harm_ext}), we get that
	\begin{equation*}
		\norm{u_\alpha}_{H^1(\Omega)}\leq C\norm{f_\alpha}_{H^{1/2}(\partial\Omega)}\quad\text{for all }\alpha\geq 1.
	\end{equation*}
	Hence, from weak convergence of $\{f_\alpha\}_{\alpha\geq 1}\sub H^{1/2}(\partial\Omega)$ we deduce that $\{u_\alpha\}_{\alpha\geq 1}$ is bounded in $H^1(\Omega)$. Therefore, there exists $U\in H^1(\Omega)$ such that, up to a subsequence,
	\begin{align*}
		&u_\alpha\weak U\quad\text{weakly in }H^1(\Omega), \\
		&u_\alpha\to U\quad\text{strongly in }L^2(\Omega)~\text{and }L^2(\partial\Omega),
	\end{align*}
	as $\alpha\to +\infty$. On the other hand, since $f_\alpha\weak f$ weakly in $H^{1/2}(\partial\Omega)$, then $f_\alpha\to f$ strongly in $L^2(\partial\Omega)$ and so, taking into account \eqref{eq:bound_th4}, we deduce that $U=f$ on $\partial\Omega$. Moreover, since $u_\alpha$ is harmonic in $\Omega$ we can pass to the limit into the equation and obtain that $\Delta U=0$ in $\Omega$. Therefore, by uniqueness of the harmonic extension, we have $U=V_f$. Finally, being the limit uniquely determined, Urysohn subsequence principle implies that $u_\alpha\weak V_f$ weakly in $H^1(\Omega)$ as $\alpha\to+\infty$, without passing to subsequences. The proof is thereby complete.	
\end{proof}

We finally provide a quantification of the rate of convergence of the torsion function to its limit. In this case, more regularity is needed for the datum. We refer to \Cref{subsec:harm_ext} for more details on harmonic extensions.
\begin{lemma}[Quantitative first order expansion]\label{lemma:second_order}
	Let $\Omega$ be of class $C^{1,1}$. For any $f\in H^{3/2}(\partial\Omega)$ let $V_f\in H^{2}(\Omega)$ be its harmonic extension in $\Omega$. Then there holds
	\begin{equation*}
		\norm{\alpha U_{\partial\Omega,\alpha,f}-f}^2_{L^2(\partial\Omega)}\leq \frac{\norm{\partial_{\nnu} V_f}^2_{L^2(\partial\Omega)}}{\alpha^2}\quad\text{for all }\alpha>0.
	\end{equation*}
\end{lemma}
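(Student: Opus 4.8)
The plan is to test the (rescaled) equation solved by $u_\alpha:=\alpha U_{\partial\Omega,\alpha,f}$ against the harmonic function $w:=u_\alpha-U_f$, whose trace on $\partial\Omega$ is precisely $u_\alpha-f$, and then to use harmonicity of both $u_\alpha$ and $U_f$ together with the standing hypothesis $\partial_{\nnu}U_f\in L^2(\partial\Omega)$, which legitimizes a Green-type identity for $U_f$.

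First I would record that, rescaling \eqref{eq:minimizer_th2} (equivalently, using \eqref{eq:conv_1}), the function $u_\alpha$ is harmonic in $\Omega$ and satisfies
\begin{equation*}
	\frac1\alpha\int_\Omega\nabla u_\alpha\cdot\nabla v\dx+\int_{\partial\Omega}u_\alpha v\ds=\int_{\partial\Omega}fv\ds\qquad\text{for all }v\in H^1(\Omega).
\end{equation*}
Since $U_f$ is the harmonic extension of $f$, the function $w:=u_\alpha-U_f\in H^1(\Omega)$ is itself harmonic and has trace $u_\alpha-f$ on $\partial\Omega$. Choosing $v=w$ in the identity above, the two boundary integrals combine into $\norm{u_\alpha-f}_{L^2(\partial\Omega)}^2$, yielding
\begin{equation*}
	\norm{u_\alpha-f}_{L^2(\partial\Omega)}^2=-\frac1\alpha\int_\Omega\nabla u_\alpha\cdot\nabla w\dx.
\end{equation*}

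Next I would split $\nabla u_\alpha=\nabla w+\nabla U_f$, so that
\begin{equation*}
	\norm{u_\alpha-f}_{L^2(\partial\Omega)}^2=-\frac1\alpha\int_\Omega|\nabla w|^2\dx-\frac1\alpha\int_\Omega\nabla U_f\cdot\nabla w\dx.
\end{equation*}
The first term is non-positive and can simply be discarded. For the second, since $\Delta U_f=0$ and $\partial_{\nnu}U_f\in L^2(\partial\Omega)$, the Green identity $\int_\Omega\nabla U_f\cdot\nabla w\dx=\int_{\partial\Omega}\partial_{\nnu}U_f\,w\ds$ holds, and recalling that $w=u_\alpha-f$ on $\partial\Omega$ and applying Cauchy--Schwarz we get
\begin{equation*}
	\norm{u_\alpha-f}_{L^2(\partial\Omega)}^2\le-\frac1\alpha\int_{\partial\Omega}\partial_{\nnu}U_f\,(u_\alpha-f)\ds\le\frac1\alpha\norm{\partial_{\nnu}U_f}_{L^2(\partial\Omega)}\norm{u_\alpha-f}_{L^2(\partial\Omega)}.
\end{equation*}
Dividing by $\norm{u_\alpha-f}_{L^2(\partial\Omega)}$ (the statement being trivial when this vanishes) and recalling $u_\alpha=\alpha U_{\partial\Omega,\alpha,f}$ gives the claimed bound.

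The only genuinely delicate point is the Green identity $\int_\Omega\nabla U_f\cdot\nabla w\dx=\int_{\partial\Omega}\partial_{\nnu}U_f\,w\ds$: on a merely Lipschitz domain one must a priori interpret $\partial_{\nnu}U_f$ through the duality $H_{\textup{div}}(\Omega)\hookrightarrow H^{-1/2}(\partial\Omega)$ (as in the proof of \Cref{prop:conv_torsion}), and it is exactly the hypothesis $\partial_{\nnu}U_f\in L^2(\partial\Omega)$ that upgrades this pairing with $w\in H^{1/2}(\partial\Omega)$ to an honest integral over $\partial\Omega$, so that the scalar product of $L^2(\partial\Omega)$ may be used. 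Everything else in the argument is elementary.
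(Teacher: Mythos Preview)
Your argument is correct. Testing the rescaled weak formulation against $w=u_\alpha-U_f$, dropping the good-sign term $-\tfrac1\alpha\int_\Omega|\nabla w|^2$, and then integrating by parts using harmonicity of $U_f$ together with $\partial_{\nnu}U_f\in L^2(\partial\Omega)$ is a clean and legitimate route; the final Cauchy--Schwarz step gives exactly the claimed bound. Your remark about the Green identity on a Lipschitz domain is also accurate: the duality pairing coming from $H_{\textup{div}}(\Omega)\hookrightarrow H^{-1/2}(\partial\Omega)$ coincides with the $L^2(\partial\Omega)$ integral precisely under the standing hypothesis.

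The paper takes a slightly different, more variational route: it performs the change of variable $w=\alpha(\alpha u-U_f)$ in the functional defining $T_\alpha(\partial\Omega,f)$, recognizes that $W_\alpha:=\alpha(\alpha U_\alpha-U_f)$ (which is $\alpha$ times your $w$) is the maximizer of an auxiliary problem $\widetilde{T}_\alpha$, rewrites $\widetilde{T}_\alpha$ in quotient form, and then bounds it above by $\norm{\partial_{\nnu}U_f}_{L^2(\partial\Omega)}^2$ via Cauchy--Schwarz and below by $\int_{\partial\Omega}W_\alpha^2\ds$. The two arguments use the very same ingredients---harmonicity of $U_\alpha$ and $U_f$, the identity $\int_\Omega\nabla U_f\cdot\nabla w\dx=\int_{\partial\Omega}w\,\partial_{\nnu}U_f\ds$, and Cauchy--Schwarz---but your version bypasses the auxiliary variational problem and is noticeably more direct. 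What the paper's approach buys is a structural interpretation (the bound is seen as a bound on a new torsional-type quantity $\widetilde{T}_\alpha$), which fits the overall narrative, whereas yours is a short PDE-testing argument.
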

\begin{proof}
	In this proof, we denote $U_\alpha:=U_{\partial\Omega,\alpha,f}$. We preliminarily observe that the problem
	\begin{equation}\label{eq:second_3}
		\max\left\{ 2\int_{\partial\Omega}fu\ds-\int_\Omega|\nabla u|^2\dx-\alpha\int_{\partial\Omega}u^2\ds\colon u \in H^1(\Omega) \right\},
	\end{equation}
	coincides with
	\begin{multline}\label{eq:second_1}
		\frac{1}{\alpha^3}\max\left\{-2\int_\Omega\nabla V_f\cdot\nabla w\dx-\frac{1}{\alpha}\int_\Omega|\nabla w|^2\dx-\int_{\partial\Omega}w^2\ds\colon w\in H^1(\Omega)\right\} \\
		-\frac{1}{\alpha}\left(\frac{1}{\alpha}\int_\Omega|\nabla V_f|^2\dx-\int_{\partial\Omega}f^2\ds\right)
	\end{multline}
	by simply letting $w=\alpha(\alpha u-V_f)$. Hence, since
	\begin{equation*}
		\int_\Omega \nabla V_f\cdot\nabla w\dx=\int_{\partial\Omega}w\partial_{\nnu} V_f\ds,
	\end{equation*}
	and since $U_\alpha$ is the unique function achieving \eqref{eq:second_3}, we have that $W_\alpha:=\alpha(\alpha U_\alpha-V_f)$ is the unique function achieving
	\begin{equation*}
		\widetilde{T}_\alpha:=\max\left\{ 2\int_{\partial\Omega}w\partial_{\nnu} V_f\ds-\frac{1}{\alpha}\int_\Omega|\nabla w|^2\dx-\int_{\partial\Omega}w^2\ds\colon w\in H^1(\Omega)\right\},
	\end{equation*}
	up to replacing $w$ with $-w$. Moreover, reasoning as in \Cref{lemma:minimizer}, one can easily prove that
	\begin{equation}\label{eq:second_2}
		\widetilde{T}_\alpha=\frac{1}{\alpha}\int_\Omega|\nabla W_\alpha|^2\dx+\int_{\partial\Omega}W_\alpha^2\ds.
	\end{equation}
	On the other hand, reasoning analogously as in the proof of \eqref{eq:equiv_torsional}, one can easily see that
	\begin{equation*}
		\widetilde{T}_\alpha=\max\left\{ \frac{\displaystyle \left(\int_{\partial\Omega}w\partial_{\nnu}V_f\ds\right)^2}{\displaystyle \frac{1}{\alpha}\int_\Omega|\nabla w|^2\dx+\int_{\partial\Omega}w^2\ds}\colon w\in H^1(\Omega)\setminus\{0\}\right\}.
	\end{equation*}
	Now, by applying Cauchy-Schwarz in the previous formula and taking into account \eqref{eq:second_2}, we have that
	\begin{equation*}
		\int_{\partial\Omega}W_\alpha^2\leq \widetilde{T}_\alpha\leq \norm{\partial_{\nnu}V_f}_{L^2(\partial\Omega)}^2,
	\end{equation*}
	which concludes the proof.
\end{proof}

We remark that a quantified estimate of $\norm{\alpha U_{\partial\Omega,\alpha,f}-f}_{L^2(\partial\Omega)}$ already appears in \Cref{lemma:bound}, see \eqref{eq:bound_th4}. However, the previous \Cref{lemma:second_order} provides a stronger decay (of order $\alpha^{-2}$ rather than $\alpha^{-2/3}$) in case of a more regular datum $f$. This result is indeed applied in \Cref{lemma:rho} with $f=\partial_{\nnu}\varphi$ (being $\varphi$ a Dirichlet eigenfunction), in the case in which the domain $\Omega$ is regular enough to ensure $\varphi\in H^3(\Omega)$ (i.e. of class $C^{2,1}$), so that $\partial_{\nnu}\varphi\in H^{3/2}(\partial\Omega)$.

\section{Quantitative estimates}\label{sec:quantitative}

In the present section we put the basis for the application of the \emph{Lemma on small eigenvalues}, which is the main tool for the proof of our main results. In particular, we need precise estimates for quantities related to $T_\alpha(\partial\Omega,f)$ and its minimizer, focusing on the case in which $f$ is the normal derivative of a Dirichlet eigenfunction.

We start with a simple but crucial observation: since any Dirichlet eigenspace $E(\lambda_n)$ is finite dimensional, all the norms are equivalent.

\begin{lemma}[Equivalence of norms]\label{lemma:equiv_norms}
	Let $n\in\N$. Then, there exists a constant $C_n^{\textup{eq}}>0$, depending on $d$, $\Omega$ and $n$, such that	
	\begin{equation}\label{eq:equiv_norms_th1}
		\frac{1}{C_n^{\textup{eq}}}\leq \frac{\norm{\varphi}_{L^2(\Omega)}^2}{\norm{\partial_{\nnu}\varphi}_{L^2(\partial\Omega)}^2}\leq\frac{\norm{\varphi}_{H^1(\Omega)}^2}{\norm{\partial_{\nnu}\varphi}_{L^2(\partial\Omega)}^2}\leq C_n^\textup{eq}\quad\text{for all }\varphi\in E(\lambda_n)\setminus\{0\}.
	\end{equation}
	Moreover, if $\partial_{\nnu}\varphi\in H^{1/2}(\partial\Omega)$ for all $\varphi\in E(\lambda_n)$, then
	\begin{equation}\label{eq:equiv_norms_th2}
		\frac{1}{C_n^{\textup{eq}}}\leq\frac{\norm{V_{\partial_{\nnu}\varphi}}_{L^2(\Omega)}^2}{\norm{\partial_{\nnu}\varphi}_{L^2(\partial\Omega)}^2}\leq  \frac{\norm{V_{\partial_{\nnu}\varphi}}_{H^1(\Omega)}^2}{\norm{\partial_{\nnu}\varphi}_{L^2(\partial\Omega)}^2}\leq C_n^{\textup{eq}}\quad\text{for all }\varphi\in E(\lambda_n)\setminus\{0\}
	\end{equation}
	and, if also $\partial_{\nnu}V_{\partial_{\nnu}\varphi}\in L^2(\partial\Omega)$ for all $\varphi\in E(\lambda_n)$, then
	\begin{equation}\label{eq:equiv_norms_th3}
		\frac{1}{C_n^{\textup{eq}}}\leq \frac{\norm{\partial_{\nnu}V_{\partial_{\nnu}\varphi}}_{L^2(\partial\Omega)}^2}{\norm{\partial_{\nnu}\varphi}_{L^2(\partial\Omega)}^2}\leq C_n^{\textup{eq}}\quad\text{for all }\varphi\in E(\lambda_n)\setminus\{0\},
	\end{equation}
	where $V_{\partial_{\nnu}\varphi}\in H^1(\Omega)$ denotes the harmonic extension of $\partial_{\nnu}\varphi$ in $\Omega$.
\end{lemma}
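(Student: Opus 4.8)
The plan is to exploit the single structural fact that each Dirichlet eigenspace $E(\lambda_n)$ is finite dimensional, of dimension $m:=\dim E(\lambda_n)$: on such a space any two norms are equivalent and any seminorm is dominated by any norm, with constants depending only on $E(\lambda_n)$, hence on $d$, $\Omega$ and $n$. The maps $\varphi\mapsto\partial_{\nnu}\varphi$, $\varphi\mapsto U_{\partial_{\nnu}\varphi}$ (harmonic extension, linear and continuous $H^{1/2}(\partial\Omega)\to H^1(\Omega)$) and $\varphi\mapsto\partial_{\nnu}U_{\partial_{\nnu}\varphi}$ are all linear, so the quantities entering \eqref{eq:equiv_norms_th1}--\eqref{eq:equiv_norms_th3} are seminorms on $E(\lambda_n)$, finite under the respective hypotheses. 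Consequently the right-hand (upper) inequalities are automatic --- each reads ``seminorm $\le$ constant $\cdot$ norm'', with reference norm $\norm{\cdot}_{L^2(\Omega)}$ or $\norm{\partial_{\nnu}\cdot}_{L^2(\partial\Omega)}$ --- while the two middle inequalities $\norm{\varphi}_{L^2(\Omega)}\le\norm{\varphi}_{H^1(\Omega)}$ and $\norm{U_{\partial_{\nnu}\varphi}}_{L^2(\Omega)}\le\norm{U_{\partial_{\nnu}\varphi}}_{H^1(\Omega)}$ are trivial. After enlarging $C_n^{\textup{eq}}$ to the largest of the constants so produced, it only remains to prove the left-hand bounds, i.e.\ to show that those seminorms are in fact \emph{norms}: equivalently, that the three linear maps above are injective on $E(\lambda_n)$.

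The essential injectivity is that of $\varphi\mapsto\partial_{\nnu}\varphi$: if $\varphi\in E(\lambda_n)$ has $\partial_{\nnu}\varphi=0$ on $\partial\Omega$, then $\varphi\equiv 0$. Since $\varphi\in H^1_0(\Omega)$, both $\varphi$ and its normal derivative vanish on $\partial\Omega$; extending $\varphi$ by zero to an open neighbourhood $\widetilde\Omega$ of $\overline\Omega$ produces $\widetilde\varphi\in H^1(\widetilde\Omega)$, and since the Cauchy data of $\varphi$ vanish there is no distributional jump of the normal derivative across $\partial\Omega$, so $\widetilde\varphi$ weakly solves $-\Delta\widetilde\varphi=\lambda_n\widetilde\varphi$ in $\widetilde\Omega$ while vanishing on the nonempty open set $\widetilde\Omega\setminus\overline\Omega$; strong unique continuation for $-\Delta$ with bounded potential then forces $\widetilde\varphi\equiv 0$, hence $\varphi\equiv 0$. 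This makes $\varphi\mapsto\norm{\partial_{\nnu}\varphi}_{L^2(\partial\Omega)}$ a norm on $E(\lambda_n)$ --- which simultaneously shows the bilinear form \eqref{eq:intr_bilinear} to be a scalar product, as asserted in the introduction --- and \eqref{eq:equiv_norms_th1} follows by equivalence of norms in finite dimension.

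For \eqref{eq:equiv_norms_th2} the harmonic extension $f\mapsto U_f$ is injective for the trivial reason that the Dirichlet trace of $U_f$ is $f$; hence $\norm{U_{\partial_{\nnu}\varphi}}_{L^2(\Omega)}=0$ forces $\partial_{\nnu}\varphi=0$ and, by the previous step, $\varphi=0$, so $\norm{U_{\partial_{\nnu}\varphi}}_{L^2(\Omega)}$ is a norm on $E(\lambda_n)$ and \eqref{eq:equiv_norms_th2} follows again from finite-dimensional equivalence. For \eqref{eq:equiv_norms_th3} the upper bound is already covered, and the lower bound is the injectivity of $\varphi\mapsto\partial_{\nnu}U_{\partial_{\nnu}\varphi}$: if $\partial_{\nnu}U_{\partial_{\nnu}\varphi}=0$ on $\partial\Omega$ then $U_{\partial_{\nnu}\varphi}$, being harmonic in the connected set $\Omega$ with vanishing Neumann trace, is constant, whence $\partial_{\nnu}\varphi$ is (a.e.) constant on $\partial\Omega$, and one is reduced to ruling out a (necessarily nonzero) constant normal derivative for a Dirichlet eigenfunction.

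I expect this last reduction to be the main obstacle. The unique continuation statement of the second paragraph is classical (it is essentially the fact recalled in the comments to the main results), the harmonic-extension estimates are standard, and all the rest is soft finite-dimensional functional analysis; but a Dirichlet eigenfunction \emph{can} have constant nonzero normal derivative --- e.g.\ the first eigenfunction on a ball --- so the lower bound in \eqref{eq:equiv_norms_th3}, as stated, appears to require an additional nondegeneracy assumption on $\Omega$, or else it is invoked in the sequel only through its upper bound (which is what actually enters the estimates of \Cref{sec:quantitative}). Modulo this caveat, the scheme above establishes \Cref{lemma:equiv_norms}.
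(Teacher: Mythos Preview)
Your approach is exactly the paper's: linearity of the three maps $\varphi\mapsto\partial_{\nnu}\varphi$, $\varphi\mapsto U_{\partial_{\nnu}\varphi}$, $\varphi\mapsto\partial_{\nnu}U_{\partial_{\nnu}\varphi}$, positive definiteness of the induced bilinear forms via unique continuation and uniqueness of the harmonic extension, then equivalence of norms in finite dimension. The paper's own proof is in fact terser than yours---it lists the three bilinear forms, asserts they are scalar products, justifies this explicitly only for the first two, and concludes.

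Your caveat about the lower bound in \eqref{eq:equiv_norms_th3} is correct and exposes a genuine gap that the paper overlooks. Your counterexample is valid: for the first Dirichlet eigenfunction $\varphi_1$ on a ball, $\partial_{\nnu}\varphi_1$ is a nonzero constant $c$ by radial symmetry, so its harmonic extension is $U_{\partial_{\nnu}\varphi_1}\equiv c$ and $\partial_{\nnu}U_{\partial_{\nnu}\varphi_1}\equiv 0$; thus the third bilinear form is \emph{not} positive definite and the lower inequality in \eqref{eq:equiv_norms_th3} fails for $n=1$ on the ball. The paper's proof does not address positive definiteness of this third form at all. You are also right that only the upper bound of \eqref{eq:equiv_norms_th3} is ever invoked later (in the $C^3$ case of \Cref{lemma:rho}, to control $\norm{\partial_{\nnu}U_{\partial_{\nnu}\psi}}_{L^2(\partial\Omega)}$ uniformly over normalized $\psi\in E(\lambda_n)$ after applying \Cref{lemma:second_order}), so the defect is harmless for the main theorems.
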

\begin{proof}
	First of all, we observe that the following functions
	\begin{align*}
		E(\lambda_n)\ni \varphi&\mapsto \partial_{\nnu}\varphi \in L^2(\partial\Omega), \\
		E(\lambda_n)\ni \varphi&\mapsto V_{\partial_{\nnu}\varphi}\in H^1(\Omega), \\
		E(\lambda_n)\ni \varphi&\mapsto \partial_{\nnu}V_{\partial_{\nnu}\varphi}\in L^2(\partial\Omega)
	\end{align*}
	are linear, when the integrals are finite. As a consequence, the following functions, acting on $E(\lambda_n)\times E(\lambda_n)$,
	\begin{align*}
		&(\varphi,\psi)\mapsto \int_{\partial\Omega}\partial_{\nnu}\varphi\partial_{\nnu}\psi\ds, \\
		&(\varphi,\psi)\mapsto \int_\Omega\nabla V_{\partial_{\nnu}\varphi}\cdot\nabla V_{\partial_{\nnu}\psi}\dx+\int_\Omega V_{\partial_{\nnu}\varphi}V_{\partial_{\nnu}\psi}\dx, \\
		&(\varphi,\psi)\mapsto \int_{\partial\Omega}\partial_{\nnu}V_{\partial_{\nnu}\varphi}\partial_{\nnu}V_{\partial_{\nnu}\psi}\ds, 
	\end{align*}
	are well-defined (in the sense that the integrals are finite) bilinear forms. The first one is well-defined in view of \eqref{ass:Omega}, while the second and the third integrals are finite by assumption. Moreover, they all are scalar product. In fact, by classical unique continuation results we have that, if $\partial_{\nnu}\varphi\equiv\varphi\equiv 0$ on $\partial\Omega$ for some $\varphi\in E(\lambda_n)$, then necessarily $\varphi\equiv 0$ in $\Omega$. Indeed, if we extend such $\varphi$ as $\varphi\equiv 0$ in $\R^d\setminus\Omega$, then one can prove (using the divergence theorem, see \Cref{subsec:dive}) that $\varphi\in H^1(\R^d)$ and that it weakly solves $-\Delta \varphi=\lambda_n\varphi$ in $\R^d$. Thus by unique continuation at interior points (see e.g. \cite{Aronszajn}), we have that $\varphi\equiv 0$ in $\R^d$. 
	 In turn, by uniqueness of the harmonic extension, $V_{\partial_{\nnu}\varphi}\equiv 0$ in $\Omega$ if and only if $\partial_{\nnu}\varphi\equiv 0$ on $\partial\Omega$. Therefore, since $E(\lambda_n)$ is finite dimensional, every norm is equivalent, thus concluding the proof.
\end{proof}

From now on, for the sake of simplicity, we denote
\begin{equation*}
	U_\alpha^\varphi:=U_{\partial\Omega,\alpha,\partial_{\nnu}\varphi},
\end{equation*}
for any Dirichlet eigenfunction $\varphi\in H^1_0(\Omega)$ and any $\alpha>0$. Moreover, we introduce the following two quantities, which play the role of remainder terms in the estimate of the eigenvalue variation, see \Cref{sec:sketch}. More precisely, for any $n\in\N$ and $\alpha>0$, we let
\begin{equation}\label{eq:def_omega}
	\omega_n(\alpha):=\sup_{\substack{\varphi\in E(\lambda_n) \\ \norm{\varphi}_{L^2(\Omega)}=1}}\norm{U_\alpha^\varphi}_{L^2(\Omega)}^2
\end{equation}
and
\begin{equation}\label{eq:def_rho}
	\rho_n(\alpha):=\sup_{\substack{\varphi,\psi\in E(\lambda_n) \\ \norm{\varphi}_{L^2(\Omega)}=\norm{\psi}_{L^2(\Omega)}=1}}\abs{\int_{\partial\Omega}\left(U_\alpha^\psi-\frac{1}{\alpha}\partial_{\nnu}\psi\right)\partial_{\nnu}\varphi\ds}.
\end{equation}
We start with the analysis of $\omega_n(\alpha)$, for which we first need the following preliminary result.
\begin{lemma}\label{lemma:L2_weak}
	For any $n\in\N$, we have
	\begin{equation*}
		\sup_{\substack{\varphi\in E(\lambda_n) \\ \norm{\varphi}_{L^2(\Omega)}=1}}\frac{\norm{U_\alpha^\varphi}_{L^2(\Omega)}^2}{T_\alpha(\partial\Omega,\partial_{\nnu}\varphi)}\to 0\quad\text{as }\alpha\to+\infty.
	\end{equation*}
\end{lemma}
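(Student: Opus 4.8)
The strategy is to argue by contradiction, exploiting the compactness of the unit sphere in the finite-dimensional space $E(\lambda_n)$ together with the weak-convergence result \Cref{lemma:weak}. Suppose the claim fails: then there exist $\e_0>0$, a diverging sequence $\alpha_k\to+\infty$, and functions $\varphi_k\in E(\lambda_n)$ with $\norm{\varphi_k}_{L^2(\Omega)}=1$ such that
\begin{equation*}
	\norm{U_{\alpha_k}^{\varphi_k}}_{L^2(\Omega)}^2\geq \e_0\, T_{\alpha_k}(\partial\Omega,\partial_{\nnu}\varphi_k)\quad\text{for all }k.
\end{equation*}
Since $E(\lambda_n)$ is finite dimensional, up to a subsequence $\varphi_k\to\varphi$ in $E(\lambda_n)$ for some $\varphi$ with $\norm{\varphi}_{L^2(\Omega)}=1$, and by \Cref{lemma:equiv_norms} (specifically \eqref{eq:equiv_norms_th1}) the linear map $\psi\mapsto\partial_{\nnu}\psi$ is continuous on $E(\lambda_n)$, so $\partial_{\nnu}\varphi_k\to\partial_{\nnu}\varphi$ strongly in $L^2(\partial\Omega)$ — and in particular $\partial_{\nnu}\varphi\not\equiv 0$ by point (2) of \Cref{sec:comments}, i.e. $\norm{\partial_{\nnu}\varphi}_{L^2(\partial\Omega)}^2>0$.

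Now I would control both sides. For the right-hand side, \eqref{eq:bound_th1} gives the lower bound $\alpha_k T_{\alpha_k}(\partial\Omega,\partial_{\nnu}\varphi_k)\geq (\int_{\partial\Omega}\partial_{\nnu}\varphi_k\ds)^2/\mathcal{H}^{d-1}(\partial\Omega)$; however this bottom bound may degenerate if the mean of $\partial_{\nnu}\varphi$ vanishes, so instead I use \Cref{lemma:weak}: since $\partial_{\nnu}\varphi_k\to\partial_{\nnu}\varphi$ strongly in $L^2(\partial\Omega)$, \eqref{eq:weak_th2} yields $\alpha_k T_{\alpha_k}(\partial\Omega,\partial_{\nnu}\varphi_k)\to\int_{\partial\Omega}(\partial_{\nnu}\varphi)^2\ds>0$. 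Hence the right-hand side is $\sim c/\alpha_k$ with $c>0$, so in particular it tends to $0$, and the assumed inequality forces $\norm{U_{\alpha_k}^{\varphi_k}}_{L^2(\Omega)}^2\to 0$ as well — wait, that is not yet a contradiction, so I need the sharper comparison: the inequality rearranges to
\begin{equation*}
	\frac{\norm{U_{\alpha_k}^{\varphi_k}}_{L^2(\Omega)}^2}{T_{\alpha_k}(\partial\Omega,\partial_{\nnu}\varphi_k)}=\frac{\alpha_k\norm{U_{\alpha_k}^{\varphi_k}}_{L^2(\Omega)}^2}{\alpha_k T_{\alpha_k}(\partial\Omega,\partial_{\nnu}\varphi_k)}\geq\e_0,
\end{equation*}
and since the denominator converges to the positive constant $\int_{\partial\Omega}(\partial_{\nnu}\varphi)^2\ds$, it suffices to show $\alpha_k\norm{U_{\alpha_k}^{\varphi_k}}_{L^2(\Omega)}^2\to 0$ to reach the contradiction. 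This is the heart of the matter.

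To see this, set $u_k:=\alpha_k U_{\alpha_k}^{\varphi_k}$, so that $\norm{u_k}_{L^2(\Omega)}^2=\alpha_k^2\norm{U_{\alpha_k}^{\varphi_k}}_{L^2(\Omega)}^2$ and the target is $\norm{u_k}_{L^2(\Omega)}^2/\alpha_k\to 0$. By \Cref{lemma:weak} applied with $f_{\alpha}:=\partial_{\nnu}\varphi_k$ (for $\alpha=\alpha_k$, noting these converge strongly in $L^2(\partial\Omega)$), we have $u_k\weak\partial_{\nnu}\varphi$ weakly in $L^2(\partial\Omega)$; moreover from \eqref{eq:bound_th1} and \eqref{eq:minimizer_th1} we get $\int_\Omega|\nabla u_k|^2\dx\leq\alpha_k\int_{\partial\Omega}(\partial_{\nnu}\varphi_k)^2\ds$ and $\int_{\partial\Omega}u_k^2\ds\leq\int_{\partial\Omega}(\partial_{\nnu}\varphi_k)^2\ds$, so $\{u_k\}$ is bounded in $L^2(\partial\Omega)$. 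By \Cref{lemma:coerc},
\begin{equation*}
	\frac{1}{\alpha_k}\norm{u_k}_{L^2(\Omega)}^2\leq\frac{C_{\textup{coerc}}}{\alpha_k}\left(\int_\Omega|\nabla u_k|^2\dx+\int_{\partial\Omega}u_k^2\ds\right)\leq C_{\textup{coerc}}\left(\int_{\partial\Omega}(\partial_{\nnu}\varphi_k)^2\ds+\frac{1}{\alpha_k}\int_{\partial\Omega}(\partial_{\nnu}\varphi_k)^2\ds\right),
\end{equation*}
which is bounded but does \emph{not} go to zero — so this crude bound is insufficient, and I must instead bound $\norm{u_k}_{L^2(\Omega)}$ itself uniformly. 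For that I combine \eqref{eq:bound_th4} (which gives $\norm{u_k-U_{\partial_{\nnu}\varphi_k}}_{L^2(\partial\Omega)}^2\leq 2\norm{\partial_{\nnu}\varphi_k}_{L^2(\partial\Omega)}\norm{\nabla U_{\partial_{\nnu}\varphi_k}}_{L^2(\Omega)}/\sqrt{\alpha_k}$, where $U_{\partial_{\nnu}\varphi_k}$ is the harmonic extension) with \Cref{lemma:coerc} and \eqref{eq:conv_3}-type estimates; here the extra regularity $\partial_{\nnu}\varphi_k\in H^{1/2}(\partial\Omega)$ is available because $\varphi_k\in E(\lambda_n)$ under hypothesis \eqref{ass:Omega}, so the harmonic extensions $U_{\partial_{\nnu}\varphi_k}$ have $H^1$-norms uniformly bounded by $C\norm{\partial_{\nnu}\varphi_k}_{H^{1/2}(\partial\Omega)}\leq C'$, using again that all norms on the finite-dimensional $E(\lambda_n)$ are equivalent (\Cref{lemma:equiv_norms}). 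This shows $\norm{u_k}_{L^2(\Omega)}\leq C$ uniformly, hence $\norm{u_k}_{L^2(\Omega)}^2/\alpha_k\leq C^2/\alpha_k\to 0$, which is the desired contradiction and completes the proof.

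\medskip

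\textbf{Main obstacle.} The delicate point is obtaining the \emph{uniform-in-$k$} bound on $\norm{u_k}_{L^2(\Omega)}=\alpha_k\norm{U_{\alpha_k}^{\varphi_k}}_{L^2(\Omega)}$: the naive coercivity estimate only controls $\norm{u_k}_{L^2(\Omega)}^2$ by $\alpha_k$, which is too weak. One genuinely needs the quantitative boundary estimate \eqref{eq:bound_th4} together with the $H^{1/2}$-regularity of the boundary data (guaranteed by \eqref{ass:Omega} and the finite-dimensionality of $E(\lambda_n)$) to pin down $u_k$ uniformly near its weak limit $\partial_{\nnu}\varphi$, and only then does dividing by $\alpha_k$ kill the quantity. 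Equivalently, one can phrase this as: $\alpha T_\alpha(\partial\Omega,\partial_{\nnu}\varphi)\to\int_{\partial\Omega}(\partial_{\nnu}\varphi)^2\ds$ is $\Theta(1)$, whereas $\alpha^2\norm{U_\alpha^\varphi}_{L^2(\Omega)}^2$ stays bounded, so the ratio in the statement is $O(1/\alpha)\to 0$, uniformly over the (compact) unit sphere of $E(\lambda_n)$ by the contradiction/compactness argument above.
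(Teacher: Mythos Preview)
Your proof has a genuine gap at the key step. You claim that ``the extra regularity $\partial_{\nnu}\varphi_k\in H^{1/2}(\partial\Omega)$ is available because $\varphi_k\in E(\lambda_n)$ under hypothesis \eqref{ass:Omega}'', but this is false: hypothesis \eqref{ass:Omega} only guarantees $\partial_{\nnu}\varphi\in L^2(\partial\Omega)$, not $H^{1/2}(\partial\Omega)$. The $H^{1/2}$ regularity would require $\varphi\in H^2(\Omega)$, which is an \emph{additional} assumption (e.g.\ $\Omega$ of class $C^{1,1}$ or convex). Consequently you cannot invoke \eqref{eq:bound_th4} or the estimates of \Cref{prop:conv_torsion}, and your uniform bound on $\norm{u_k}_{L^2(\Omega)}$ is unjustified. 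In effect, the argument you wrote would prove the stronger $O(1/\alpha)$ rate appearing in \Cref{lemma:L2} under the $C^{1,1}$ assumption, but it does not prove the lemma as stated under \eqref{ass:Omega} alone.

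The paper's proof avoids this issue by a cleaner normalization. Rather than separating numerator and denominator, it sets $U_i:=U_{\alpha_i}^{\phi_i}/\norm{U_{\alpha_i}^{\phi_i}}_{L^2(\Omega)}$ and observes that the contradiction hypothesis together with \eqref{eq:minimizer_th1} directly yields
\[
\norm{\nabla U_i}_{L^2(\Omega)}^2+\alpha_i\norm{U_i}_{L^2(\partial\Omega)}^2=\frac{T_{\alpha_i}(\partial\Omega,\partial_{\nnu}\phi_i)}{\norm{U_{\alpha_i}^{\phi_i}}_{L^2(\Omega)}^2}\leq C^{-1}.
\]
Hence $\{U_i\}$ is bounded in $H^1(\Omega)$ and, along a subsequence, converges weakly in $H^1$ and strongly in $L^2(\Omega)$ and $L^2(\partial\Omega)$ to some $U$ which is harmonic, satisfies $\norm{U}_{L^2(\Omega)}=1$, and has zero trace (since $\alpha_i\norm{U_i}_{L^2(\partial\Omega)}^2$ stays bounded). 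That is the contradiction, and nothing beyond \eqref{ass:Omega} is used. Your framework can be salvaged the same way: instead of $u_k=\alpha_k U_{\alpha_k}^{\varphi_k}$, work with $v_k:=\sqrt{\alpha_k}\,U_{\alpha_k}^{\varphi_k}$; then \eqref{eq:bound_th1} and \eqref{eq:minimizer_th1} alone give $\norm{\nabla v_k}_{L^2(\Omega)}^2+\alpha_k\norm{v_k}_{L^2(\partial\Omega)}^2\leq\norm{\partial_{\nnu}\varphi_k}_{L^2(\partial\Omega)}^2$, so $v_k$ is bounded in $H^1(\Omega)$ with boundary trace tending to zero, and the harmonic weak limit must vanish, forcing $\alpha_k\norm{U_{\alpha_k}^{\varphi_k}}_{L^2(\Omega)}^2=\norm{v_k}_{L^2(\Omega)}^2\to 0$.
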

\begin{proof}
	Let us assume by contradiction that, there exists $\{\phi_i\}_i\sub E(\lambda_n)$, $\{\alpha_i\}_i\sub (0,+\infty)$ and $C>0$ such that $\norm{\phi_i}_{L^2(\Omega)}=1$, $\alpha_i\to +\infty$ as $i\to\infty$ and
	\begin{equation*}
		\frac{\|U_{\alpha_i}^{\phi_i}\|_{L^2(\Omega)}^2}{T_{\alpha_i}(\partial\Omega,\partial_{\nnu}\phi_i)}\geq C\quad\text{for all }i\in\N.
	\end{equation*}
	If we denote 
	\begin{equation*}
		U_i:=\frac{U_{\alpha_i}^{\phi_i}}{\|U_{\alpha_i}^{\phi_i}\|_{L^2(\Omega)}},
	\end{equation*}
	in view also of \eqref{eq:minimizer_th1}, we have that
	\begin{equation}\label{eq:L2_weak_1}
		\int_\Omega U_i^2\dx=1\quad\text{and}\quad\int_\Omega|\nabla U_i|^2\dx+\alpha_i\int_{\partial\Omega}U_i^2\ds\leq C^{-1}\quad\text{for all }i\in\N.
	\end{equation}
	Therefore, there exists $U\in H^1(\Omega)$ such that, up to a subsequence
	\begin{align*}
		& U_i\weak U\quad\text{weakly in }H^1(\Omega), \\
		& U_i\to U\quad\text{strongly in }L^2(\Omega)~\text{and}~L^2(\partial\Omega),
	\end{align*}
	as $i\to\infty$. Moreover, in view of \eqref{eq:L2_weak_1} we have $U=0$ on $\partial\Omega$ and that $\norm{U}_{L^2(\Omega)}=1$. Finally, since $\Delta U_i=0$ in $\Omega$ for any $i\in\N$, we can pass to the limit in the equation and obtain that $\Delta U=0$ in $\Omega$ and this is a contradiction.
\end{proof}

We now state the asymptotic behavior of $\omega_n(\alpha)$ as $\alpha\to+\infty$, which turns out to depend on the regularity of $E(\lambda_n)$.
\begin{lemma}\label{lemma:L2}
	For any $n\in\N$, $\alpha\,\omega_n(\alpha)\to 0$ as $\alpha\to+\infty$. Moreover, if $\Omega$ is of class $C^{1,1}$, then $\omega_n(\alpha)\leq C_n/\alpha^2$ for all $\alpha\geq A_n$, for some $A_n\geq 1$ and $C_n>0$ depending on $d$, $\Omega$ and $n$.
\end{lemma}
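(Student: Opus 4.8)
The plan is to combine the comparison result of \Cref{lemma:L2_weak} with the sharp asymptotic $\alpha T_\alpha(\partial\Omega,\partial_{\nnu}\varphi)\to\int_{\partial\Omega}(\partial_{\nnu}\varphi)^2\ds$ coming from \Cref{lemma:weak}, keeping careful track of uniformity in $\varphi\in E(\lambda_n)$. First I would prove the qualitative statement $\alpha\,\omega_n(\alpha)\to 0$. Write
\begin{equation*}
\alpha\,\omega_n(\alpha)=\sup_{\substack{\varphi\in E(\lambda_n)\\ \norm{\varphi}_{L^2(\Omega)}=1}}\frac{\norm{U_\alpha^\varphi}_{L^2(\Omega)}^2}{T_\alpha(\partial\Omega,\partial_{\nnu}\varphi)}\,\cdot\,\alpha\,T_\alpha(\partial\Omega,\partial_{\nnu}\varphi).
\end{equation*}
The first factor tends to $0$ uniformly in $\varphi$ by \Cref{lemma:L2_weak}. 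For the second factor, the right bound in \eqref{eq:bound_th1} gives $\alpha T_\alpha(\partial\Omega,\partial_{\nnu}\varphi)\leq\norm{\partial_{\nnu}\varphi}_{L^2(\partial\Omega)}^2$, and by \Cref{lemma:equiv_norms} (the equivalence of norms on the finite-dimensional space $E(\lambda_n)$) this is bounded by $C_n^{\textup{eq}}$ whenever $\norm{\varphi}_{L^2(\Omega)}=1$. Multiplying a quantity going uniformly to $0$ by a uniformly bounded one yields $\alpha\,\omega_n(\alpha)\to 0$.

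For the quantitative improvement under $C^{1,1}$ regularity, I would instead bypass $T_\alpha$ and estimate $\norm{U_\alpha^\varphi}_{L^2(\Omega)}$ directly via the coercivity inequality \Cref{lemma:coerc} applied to $u_\alpha^\varphi:=\alpha U_\alpha^\varphi$, splitting it against the harmonic extension $U_{\partial_{\nnu}\varphi}$ of $\partial_{\nnu}\varphi$ exactly as in the proof of \Cref{prop:conv_torsion}:
\begin{equation*}
\norm{u_\alpha^\varphi}_{L^2(\Omega)}^2\leq 2C_{\textup{coerc}}\big(\norm{\nabla(u_\alpha^\varphi-U_{\partial_{\nnu}\varphi})}_{L^2(\Omega)}^2+\norm{u_\alpha^\varphi-U_{\partial_{\nnu}\varphi}}_{L^2(\partial\Omega)}^2\big)+2\norm{U_{\partial_{\nnu}\varphi}}_{L^2(\Omega)}^2 .
\end{equation*}
Here $C^{1,1}$ regularity guarantees $E(\lambda_n)\sub H^2(\Omega)$, so $\partial_{\nnu}\varphi\in H^{1/2}(\partial\Omega)$ and $\partial_{\nnu}U_{\partial_{\nnu}\varphi}\in L^2(\partial\Omega)$; thus \Cref{lemma:second_order} applies and gives $\norm{u_\alpha^\varphi-\partial_{\nnu}\varphi}_{L^2(\partial\Omega)}\leq\norm{\partial_{\nnu}U_{\partial_{\nnu}\varphi}}_{L^2(\partial\Omega)}/\alpha$. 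The boundary term in the display above is controlled by adding and subtracting $\partial_{\nnu}\varphi$: one copy is the $O(1/\alpha)$ from \Cref{lemma:second_order}, the other is $\norm{\partial_{\nnu}\varphi-U_{\partial_{\nnu}\varphi}}_{L^2(\partial\Omega)}=0$ since $U_{\partial_{\nnu}\varphi}$ has boundary trace $\partial_{\nnu}\varphi$. For the gradient term, the bound \eqref{eq:bound_th4} reads $\norm{u_\alpha^\varphi-U_{\partial_{\nnu}\varphi}}_{L^2(\partial\Omega)}^2\lesssim\norm{\partial_{\nnu}\varphi}_{L^2(\partial\Omega)}\norm{\nabla U_{\partial_{\nnu}\varphi}}_{L^2(\Omega)}/\sqrt{\alpha}$, which is not quite $O(1/\alpha^2)$; so instead I would use that $w_\alpha:=\alpha(u_\alpha^\varphi-U_{\partial_{\nnu}\varphi})$ solves the problem $\widetilde T_\alpha$ from \Cref{lemma:second_order}, whence $\tfrac1\alpha\norm{\nabla w_\alpha}_{L^2(\Omega)}^2+\norm{w_\alpha}_{L^2(\partial\Omega)}^2\leq\norm{\partial_{\nnu}U_{\partial_{\nnu}\varphi}}_{L^2(\partial\Omega)}^2$, so that $\norm{\nabla(u_\alpha^\varphi-U_{\partial_{\nnu}\varphi})}_{L^2(\Omega)}^2\leq\norm{\partial_{\nnu}U_{\partial_{\nnu}\varphi}}_{L^2(\partial\Omega)}^2/\alpha$. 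Feeding these into the coercivity split gives $\norm{u_\alpha^\varphi}_{L^2(\Omega)}^2\leq C(\norm{\partial_{\nnu}U_{\partial_{\nnu}\varphi}}_{L^2(\partial\Omega)}^2/\alpha^2+\norm{\partial_{\nnu}U_{\partial_{\nnu}\varphi}}_{L^2(\partial\Omega)}^2/\alpha+\norm{U_{\partial_{\nnu}\varphi}}_{L^2(\Omega)}^2)$; dividing by $\alpha^2$ and using \Cref{lemma:equiv_norms} (both \eqref{eq:equiv_norms_th2} and \eqref{eq:equiv_norms_th3}) to bound the right-hand side by $C_n\norm{\partial_{\nnu}\varphi}_{L^2(\partial\Omega)}^2\leq C_n C_n^{\textup{eq}}$ uniformly over the normalized $\varphi$, one concludes $\omega_n(\alpha)=\norm{U_\alpha^\varphi}_{L^2(\Omega)}^2$-sup $\leq C_n/\alpha^2$.

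The main obstacle is the middle term $\norm{U_{\partial_{\nnu}\varphi}}_{L^2(\Omega)}^2$: unlike the other two it carries \emph{no} decaying power of $\alpha$, so naively it would only yield $\omega_n(\alpha)=O(1/\alpha^2)$ through the factor $1/\alpha^2$ coming from $U_\alpha^\varphi=u_\alpha^\varphi/\alpha$ — which is in fact exactly what we want, so the apparent obstacle dissolves once we remember we are bounding $\norm{U_\alpha^\varphi}_{L^2}^2=\alpha^{-2}\norm{u_\alpha^\varphi}_{L^2}^2$. The genuine care needed is (i) making every constant uniform in $\varphi$, which is precisely what the finite-dimensionality of $E(\lambda_n)$ and \Cref{lemma:equiv_norms} deliver, and (ii) ensuring the hypotheses of \Cref{lemma:second_order} — namely $\partial_{\nnu}U_{\partial_{\nnu}\varphi}\in L^2(\partial\Omega)$ — which under $C^{1,1}$ regularity follow from $H^2$ elliptic estimates applied twice (to $\varphi$ and to the harmonic extension). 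One should also double-check the threshold $A_n\geq 1$: the estimates in \Cref{prop:conv_torsion}-style arguments and \Cref{lemma:coerc} require $\alpha\geq 1$, which is harmless.
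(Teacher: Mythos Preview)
Your argument for the first assertion ($\alpha\,\omega_n(\alpha)\to 0$) is essentially the paper's: factor out $T_\alpha$, invoke \Cref{lemma:L2_weak} for the ratio and \eqref{eq:bound_th1}$+$\Cref{lemma:equiv_norms} for the torsion term.

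For the $C^{1,1}$ part you take a different route from the paper: you aim at a direct, constructive bound on $\norm{\alpha U_\alpha^\varphi}_{L^2(\Omega)}$, whereas the paper runs a soft compactness argument (via \Cref{prop:conv_torsion}) to show that the ratio $\norm{\alpha U_\alpha^\varphi}_{L^2(\Omega)}^2/(\alpha T_\alpha^\varphi)$ stays bounded. Your strategy is attractive, but as written it has a genuine regularity gap. Both \Cref{lemma:second_order} and \eqref{eq:equiv_norms_th3} require $\partial_{\nnu}U_{\partial_{\nnu}\varphi}\in L^2(\partial\Omega)$, and this is \emph{not} guaranteed by $C^{1,1}$ regularity: from $\varphi\in H^2(\Omega)$ you only obtain $\partial_{\nnu}\varphi\in H^{1/2}(\partial\Omega)$, hence $U_{\partial_{\nnu}\varphi}\in H^1(\Omega)$ and $\partial_{\nnu}U_{\partial_{\nnu}\varphi}\in H^{-1/2}(\partial\Omega)$, not $L^2$. (Indeed, in \Cref{lemma:rho} the paper only appeals to \Cref{lemma:second_order} under a $C^3$ assumption, precisely for this reason.) Your claim that ``$H^2$ elliptic estimates applied twice'' suffice is where the argument breaks: the second application, to the harmonic extension, would need boundary data in $H^{3/2}(\partial\Omega)$, which you do not have.

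The fix is easy and stays within your direct approach. Drop \Cref{lemma:second_order} entirely: you only need $\norm{u_\alpha^\varphi}_{L^2(\Omega)}$ to be bounded (not decaying), and this follows from the estimate $\norm{u_\alpha^\varphi}_{H^1(\Omega)}\leq C\norm{\partial_{\nnu}\varphi}_{H^{1/2}(\partial\Omega)}$ established in the proof of \Cref{prop:conv_torsion} (combine \eqref{eq:conv_3} with the coercivity split there). Since $E(\lambda_n)\sub H^2(\Omega)$ is finite-dimensional, $\norm{\partial_{\nnu}\varphi}_{H^{1/2}(\partial\Omega)}\leq C_n$ uniformly over normalized $\varphi$, and dividing by $\alpha^2$ gives $\omega_n(\alpha)\leq C_n/\alpha^2$ as desired. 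This repaired version is arguably cleaner than the paper's compactness argument.
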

\begin{proof}
	Let us first prove that $\alpha\,\omega_n(\alpha)\to 0$ as $\alpha\to+\infty$. First of all, we observe that
	\begin{equation*}
		\omega_n(\alpha)\leq\sup_{ \substack{\varphi\in E(\lambda_n) \\ \norm{\varphi}_{L^2(\Omega)}=1}} T_\alpha(\partial\Omega,\partial_{\nnu}\varphi) \sup_{\substack{\varphi\in E(\lambda_n) \\ \norm{\varphi}_{L^2(\Omega)}=1}}\frac{\norm{U_\alpha^\varphi}_{L^2(\Omega)}^2}{T_\alpha(\partial\Omega,\partial_{\nnu}\varphi)}.
	\end{equation*}
	Moreover, we can estimate the right-hand side of the previous inequality by using \eqref{eq:bound_th1} and \eqref{eq:equiv_norms_th1}, thus obtaining
	\begin{equation}\label{eq:L2_pre}
		\omega_n(\alpha)\leq\frac{C_n^{\textup{eq}}}{\alpha} \sup_{\substack{\varphi\in E(\lambda_n) \\ \norm{\varphi}_{L^2(\Omega)}=1}}\frac{\norm{U_\alpha^\varphi}_{L^2(\Omega)}^2}{T_\alpha(\partial\Omega,\partial_{\nnu}\varphi)},\quad\text{for all }\alpha>0.
	\end{equation}
	Hence, in view of \Cref{lemma:L2_weak} we conclude the proof of the first claim.
	
	Let us now assume $\Omega$ to be of class $C^{1,1}$ so that, by elliptic regularity (see e.g. \cite[Theorem 2.4.2.5]{grisvard}) we have $E(\lambda_n)\sub H^2(\Omega)$. We notice that
	\begin{equation*}
		\sup_{\substack{\varphi\in E(\lambda_n) \\ \norm{\varphi}_{L^2(\Omega)}=1}}\frac{\norm{U_\alpha^\varphi}_{L^2(\Omega)}^2}{T_\alpha(\partial\Omega,\partial_{\nnu}\varphi)}=\frac{1}{\alpha}\sup_{\substack{\varphi\in E(\lambda_n) \\ \norm{\varphi}_{L^2(\Omega)}=1}}\frac{\norm{\alpha U_\alpha^\varphi}_{L^2(\Omega)}^2}{\alpha T_\alpha(\partial\Omega,\partial_{\nnu}\varphi)}
	\end{equation*}
	for any $\alpha>0$. Hence, in view of \eqref{eq:L2_pre}, we are reduced to prove that
	\begin{equation}\label{eq:L2_1}
		\sup_{\alpha\geq A_n}\sup_{\substack{\varphi\in E(\lambda_n) \\ \norm{\varphi}_{L^2(\Omega)}=1}}\frac{\norm{\alpha U_\alpha^\varphi}_{L^2(\Omega)}^2}{\alpha T_\alpha(\partial\Omega,\partial_{\nnu}\varphi)}\leq C_n^{\textup{eq}},
	\end{equation}
	for some $A_n\geq 1$. For the sake of simplicity, in the rest of the proof we denote $T_\alpha^\varphi:=T_\alpha(\partial\Omega,\partial_{\nnu}\varphi)$ and $V^\varphi:=V_{\partial_{\nnu}\varphi}$. Now, for any $\alpha>0$ and any $\varphi\in E(\lambda_n)\setminus\{0\}$, we can estimate
	\begin{align*}
		\frac{\norm{\alpha U_\alpha^\varphi}_{L^2(\Omega)}^2}{\alpha T_\alpha^\varphi}&\leq \frac{2\norm{\alpha U_\alpha^\varphi-V^{\varphi}}_{L^2(\Omega)}^2}{\alpha T_\alpha^\varphi}+\frac{2\norm{V^\varphi}_{L^2(\Omega)}^2}{\alpha T_\alpha^\varphi} \\
		&=\frac{2\norm{\alpha U_\alpha^\varphi-V^{\varphi}}_{L^2(\Omega)}^2}{\alpha T_\alpha^\varphi}+\frac{2\norm{V^\varphi}_{L^2(\Omega)}^2\left(\norm{\partial_{\nnu}\varphi}_{L^2(\partial\Omega)}^2-\alpha T_\alpha^\varphi\right)}{\alpha T_\alpha^\varphi \norm{\partial_{\nnu}\varphi}_{L^2(\partial\Omega)}^2}+\frac{2\norm{V^\varphi}_{L^2(\Omega)}^2}{\norm{\partial_{\nnu}\varphi}_{L^2(\partial\Omega)}^2}.
	\end{align*}
	Hence, in view of \eqref{eq:equiv_norms_th2} we have
	\begin{equation*}
		\sup_{\substack{\varphi\in E(\lambda_n) \\ \norm{\varphi}_{L^2(\Omega)}=1}}\frac{\norm{\alpha U_\alpha^\varphi}_{L^2(\Omega)}^2}{\alpha T_\alpha^\varphi}\leq 2\sup_{\substack{\varphi\in E(\lambda_n) \\ \norm{\varphi}_{L^2(\Omega)}=1}}\hspace{-0.2cm}\frac{\norm{\alpha U_\alpha^\varphi-V^{\varphi}}_{L^2(\Omega)}^2}{\alpha T_\alpha^\varphi}+2C_n^{\textup{eq}}\hspace{-0.2cm}\sup_{\substack{\varphi\in E(\lambda_n) \\ \norm{\varphi}_{L^2(\Omega)}=1}}\hspace{-0.2cm}\frac{\norm{\partial_{\nnu}\varphi}_{L^2(\partial\Omega)}^2-\alpha T_\alpha^\varphi}{\alpha T_\alpha^\varphi}+2C_n^{\textup{eq}}.
	\end{equation*}
	Therefore, sufficient conditions for \eqref{eq:L2_1} are that
	\begin{equation}\label{eq:L2_2}
		\lim_{\alpha\to+\infty}\sup_{\substack{\varphi\in E(\lambda_n) \\ \norm{\varphi}_{L^2(\Omega)}=1}}\frac{\norm{\alpha U_\alpha^\varphi-V^{\varphi}}_{L^2(\Omega)}^2}{\alpha T_\alpha^\varphi}=0
	\end{equation}
	and that
	\begin{equation}\label{eq:L2_3}
		\lim_{\alpha\to+\infty}\sup_{\substack{\varphi\in E(\lambda_n) \\ \norm{\varphi}_{L^2(\Omega)}=1}}\frac{\norm{\partial_{\nnu}\varphi}_{L^2(\partial\Omega)}^2-\alpha T_\alpha^\varphi}{\alpha T_\alpha^\varphi}=0.
	\end{equation}
	For any $\alpha>0$ let us denote by $\psi_\alpha\in E(\lambda_n)$ the function attaining the supremum in \eqref{eq:L2_2}. Since $\Omega$ is of class $C^{1,1}$, then by standard elliptic regularity we have $E(\lambda_n)\sub H^2(\Omega)$ (see e.g. \cite[Theorem 2.4.2.5]{grisvard}) and  $\norm{\psi_\alpha}_{H^2(\Omega)}^2\leq C\norm{\psi_\alpha}_{H^1(\Omega)}^2=C(\lambda_n+1)$ (see e.g. \cite[Theorem 2.3.3.2]{grisvard}), for some $C>0$ and all $\alpha>0$. Hence, we have that there exists $\psi\in E(\lambda_n)$ such that, up to a subsequence
	\begin{align*}
		&\psi_\alpha\to \psi\quad\text{strongly in }H^2(\Omega), \\
		&\partial_{\nnu}\psi_\alpha\to\partial_{\nnu}\psi\quad\text{strongly in }H^{1/2}(\partial\Omega),
	\end{align*}
	as $\alpha\to+\infty$,	where the strong convergence comes from the fact that $\psi_\alpha$ varies in a finite dimensional space. Hence, by normalization $\norm{\psi}_{L^2(\Omega)}=1$. Moreover, by classical estimates for harmonic extensions (see e.g. \cite[Theorem 3.6, (i)]{BGM2022} with $s=1$), it follows that 
	\begin{equation*}
		\|V^{\psi_\alpha}-V^\psi\|_{H^1(\Omega)}\leq C \norm{{\partial_{\nnu}\psi_\alpha-\partial_{\nnu}\psi}}_{H^{1/2}(\partial\Omega)},
	\end{equation*}
	 for some $C>0$ depending on $d$ and $\Omega$. Thus, we have that
	\begin{equation*}
		V^{\psi_\alpha}\to V^\psi\quad \text{strongly in }H^1(\Omega),~\text{as }\alpha\to+\infty.
	\end{equation*}
	Moreover, from \Cref{prop:conv_torsion} and compactness of the embedding $H^1(\Omega)\hookrightarrow L^2(\Omega)$ we have that 
	\begin{align*}
		\alpha U_\alpha^{\psi_\alpha}\weak V^\psi\quad \text{weakly in }H^1(\Omega), \\
		\alpha U_\alpha^{\psi_\alpha}\to V^\psi\quad\text{strongly in }L^2(\Omega)
	\end{align*}
	as $\alpha\to+\infty$; moreover, from \eqref{eq:bound_th2} we have that $\alpha T_\alpha^{\psi_\alpha}\to \norm{\partial_{\nnu}\psi}_{L^2(\partial\Omega)}^2>0$, as $\alpha\to+\infty$. Therefore, passing to a subsequence \eqref{eq:L2_2} holds, but, since the limit is $0$ and so it does not depend on the chosen subsequence, by Urysohn subsequence principle applied to the function
	\begin{equation*}
		\alpha\mapsto \sup_{\substack{\varphi\in E(\lambda_n) \\ \norm{\varphi}_{L^2(\Omega)}=1}}\frac{\norm{\alpha U_\alpha^\varphi-V^{\varphi}}_{L^2(\Omega)}^2}{\alpha T_\alpha^\varphi}
	\end{equation*}
	we have that \eqref{eq:L2_2} holds true. Using \eqref{eq:bound_th2} in place of \Cref{prop:conv_torsion}, one can analogously prove \eqref{eq:L2_3}. The proof is thereby complete.	 
\end{proof}

In the following, we state the asymptotic behavior of $\rho_n(\alpha)$ as $\alpha\to+\infty$, which again depends on the regularity of $E(\lambda_n)$.
\begin{lemma}\label{lemma:rho}
	For any $n\in\N$, $\alpha\rho_n(\alpha)\to 0$ as $\alpha\to+\infty$. Moreover,
	\begin{itemize}
		\item if $\Omega$ is of class $C^{1,1}$, then $\rho_n(\alpha)\leq C_n/\alpha^{4/3}$
		\item if $\Omega$ is of class $C^{2,1}$, then $\rho_n(\alpha)\leq C_n/\alpha^2$
	\end{itemize}
	for all $\alpha\geq A_n$, for some $A_n\geq 1$ and $C_n>0$ depending on $d$, $\Omega$ and $n$.
\end{lemma}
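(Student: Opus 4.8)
The plan is to reduce the whole statement to the control of a single quantity. Writing $U_\alpha^\psi-\tfrac1\alpha\partial_{\nnu}\psi=\tfrac1\alpha(\alpha U_\alpha^\psi-\partial_{\nnu}\psi)$ in \eqref{eq:def_rho}, applying the Cauchy--Schwarz inequality on $\partial\Omega$, and using \eqref{eq:equiv_norms_th1} to bound $\norm{\partial_{\nnu}\varphi}_{L^2(\partial\Omega)}\leq\sqrt{C_n^{\textup{eq}}}$ for $\norm{\varphi}_{L^2(\Omega)}=1$, one gets, setting for brevity
\[
\sigma_n(\alpha):=\sup_{\substack{\psi\in E(\lambda_n)\\ \norm{\psi}_{L^2(\Omega)}=1}}\norm{\alpha U_\alpha^\psi-\partial_{\nnu}\psi}_{L^2(\partial\Omega)},
\]
the inequality $\rho_n(\alpha)\leq\alpha^{-1}\sqrt{C_n^{\textup{eq}}}\,\sigma_n(\alpha)$ for all $\alpha>0$. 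It then suffices to prove that $\sigma_n(\alpha)\to0$ in general, that $\sigma_n(\alpha)\leq C_n\alpha^{-1/4}$ when $\Omega$ is of class $C^{1,1}$, and that $\sigma_n(\alpha)\leq C_n\alpha^{-1}$ when $\Omega$ is of class $C^3$, with constants depending only on $d$, $\Omega$, $n$.

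For the qualitative claim $\alpha\rho_n(\alpha)\to0$ I would argue by compactness, in the spirit of \Cref{lemma:L2_weak} and \Cref{lemma:L2}. Since $E(\lambda_n)$ is finite dimensional, for each $\alpha$ there is $\psi_\alpha\in E(\lambda_n)$ with $\norm{\psi_\alpha}_{L^2(\Omega)}=1$ attaining $\sigma_n(\alpha)$; along any sequence $\alpha\to+\infty$ one extracts a subsequence with $\psi_\alpha\to\psi$ in $E(\lambda_n)$ (all norms being equivalent there), $\norm{\psi}_{L^2(\Omega)}=1$, hence $\partial_{\nnu}\psi_\alpha\to\partial_{\nnu}\psi$ strongly in $L^2(\partial\Omega)$ by linearity of $\varphi\mapsto\partial_{\nnu}\varphi$ on the finite-dimensional space $E(\lambda_n)$. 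Writing $u_\alpha:=\alpha U_\alpha^{\psi_\alpha}$ and using $\int_{\partial\Omega}u_\alpha^2\ds\leq\alpha\,T_\alpha(\partial\Omega,\partial_{\nnu}\psi_\alpha)=\int_{\partial\Omega}u_\alpha\,\partial_{\nnu}\psi_\alpha\ds$, which follows from the right bound in \eqref{eq:bound_th1} and \eqref{eq:minimizer_th1}, one obtains
\[
\norm{u_\alpha-\partial_{\nnu}\psi_\alpha}_{L^2(\partial\Omega)}^2\leq\int_{\partial\Omega}(\partial_{\nnu}\psi_\alpha)^2\ds-\alpha\,T_\alpha(\partial\Omega,\partial_{\nnu}\psi_\alpha),
\]
and the right-hand side tends to $0$ by \eqref{eq:weak_th2} of \Cref{lemma:weak} applied with $f_\alpha=\partial_{\nnu}\psi_\alpha$, $f=\partial_{\nnu}\psi$. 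As the limit is independent of the chosen subsequence, the Urysohn principle gives $\sigma_n(\alpha)\to0$, hence $\alpha\rho_n(\alpha)\to0$.

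For the quantitative rates I would replace the soft argument above by the explicit estimates of \Cref{lemma:bound} and \Cref{lemma:second_order}. If $\Omega$ is of class $C^{1,1}$, elliptic regularity gives $E(\lambda_n)\sub H^2(\Omega)$, so $\partial_{\nnu}\psi\in H^{1/2}(\partial\Omega)$ with harmonic extension $U_{\partial_{\nnu}\psi}\in H^1(\Omega)$; then \eqref{eq:bound_th4} with $f=\partial_{\nnu}\psi$ yields $\norm{\alpha U_\alpha^\psi-\partial_{\nnu}\psi}_{L^2(\partial\Omega)}^2\leq 2\alpha^{-1/2}\norm{\partial_{\nnu}\psi}_{L^2(\partial\Omega)}\norm{\nabla U_{\partial_{\nnu}\psi}}_{L^2(\Omega)}$, and by \eqref{eq:equiv_norms_th1}--\eqref{eq:equiv_norms_th2} the product on the right is bounded by a constant depending only on $d$, $\Omega$, $n$ when $\norm{\psi}_{L^2(\Omega)}=1$, so $\sigma_n(\alpha)\leq C_n\alpha^{-1/4}$ and $\rho_n(\alpha)\leq C_n\alpha^{-5/4}$. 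If $\Omega$ is of class $C^3$, elliptic regularity gives $E(\lambda_n)\sub H^3(\Omega)$, hence $\partial_{\nnu}\psi\in H^{3/2}(\partial\Omega)$ and, since $C^3\Rightarrow C^{1,1}$, its harmonic extension $U_{\partial_{\nnu}\psi}$ lies in $H^2(\Omega)$, so $\partial_{\nnu}U_{\partial_{\nnu}\psi}\in H^{1/2}(\partial\Omega)\sub L^2(\partial\Omega)$; \Cref{lemma:second_order} with $f=\partial_{\nnu}\psi$ then gives $\norm{\alpha U_\alpha^\psi-\partial_{\nnu}\psi}_{L^2(\partial\Omega)}\leq\alpha^{-1}\norm{\partial_{\nnu}U_{\partial_{\nnu}\psi}}_{L^2(\partial\Omega)}$, and \eqref{eq:equiv_norms_th3} bounds the right-hand side uniformly for $\norm{\psi}_{L^2(\Omega)}=1$, whence $\sigma_n(\alpha)\leq C_n\alpha^{-1}$ and $\rho_n(\alpha)\leq C_n\alpha^{-2}$.

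The only point requiring genuine care is the regularity bookkeeping: one must check that $C^{1,1}$ (resp. $C^3$) regularity of $\Omega$ propagates through elliptic regularity to the Dirichlet eigenfunctions and, via boundedness of the harmonic extension operator, to $U_{\partial_{\nnu}\psi}$, exactly to the extent needed to make \eqref{eq:bound_th4} (resp. \Cref{lemma:second_order}) applicable with constants uniform over the unit sphere of $E(\lambda_n)$. Everything else is the Cauchy--Schwarz reduction, the finite-dimensionality of $E(\lambda_n)$, and the results already established above; in particular no new compactness beyond that in \Cref{lemma:L2_weak} is needed.
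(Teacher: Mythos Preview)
Your proposal is correct and follows essentially the same route as the paper: for the quantitative $C^{1,1}$ and $C^3$ estimates you use exactly the paper's argument (Cauchy--Schwarz combined with \eqref{eq:bound_th4}, resp.\ \Cref{lemma:second_order}, and the equivalence-of-norms \Cref{lemma:equiv_norms}). The only minor difference is in the qualitative claim $\alpha\rho_n(\alpha)\to0$: the paper keeps the pairing structure of $\rho_n(\alpha)$ and concludes directly from the weak convergence $\alpha U_\alpha^{\psi_\alpha}\rightharpoonup\partial_{\nnu}\psi$ in $L^2(\partial\Omega)$ (\eqref{eq:weak_th1}) against the strongly convergent $\partial_{\nnu}\varphi_\alpha$, whereas you first Cauchy--Schwarz and then prove the slightly stronger fact $\sigma_n(\alpha)\to0$ via the algebraic inequality $\norm{u_\alpha-\partial_{\nnu}\psi_\alpha}_{L^2(\partial\Omega)}^2\leq\norm{\partial_{\nnu}\psi_\alpha}_{L^2(\partial\Omega)}^2-\alpha T_\alpha$ together with \eqref{eq:weak_th2}. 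Both arguments rest on the same finite-dimensional compactness and \Cref{lemma:weak}; your version just extracts a bit more (uniform strong $L^2(\partial\Omega)$ convergence of $\alpha U_\alpha^\psi$) at no extra cost. One cosmetic remark: the inequality $\int_{\partial\Omega}u_\alpha^2\ds\leq\alpha T_\alpha$ follows from \eqref{eq:minimizer_th1} alone (the gradient term is nonnegative); the reference to the right bound of \eqref{eq:bound_th1} is not needed there.
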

\begin{proof}
	Let us first prove that $\alpha\rho_n(\alpha)\to 0$ as $\alpha\to+\infty$. One can easily see that the supremum defining $\rho_n(\alpha)$ is achieved, hence we denote by $\psi_\alpha,\varphi_\alpha\in E(\lambda_n)$ the maximizers. Since
	\begin{equation*}
		\norm{\psi_\alpha}_{H^1(\Omega)}^2=\norm{\varphi_\alpha}_{H^1(\Omega)}^2=\lambda_n+1,
	\end{equation*}
	we have that there exists $\psi,\varphi\in E(\lambda_n)$ such that, up to a subsequence
	\begin{align*}
		&\psi_\alpha\to \psi\quad\text{strongly in }H^1(\Omega), \\
		&\varphi_\alpha\to \varphi\quad\text{strongly in }H^1(\Omega),
	\end{align*}
	as $\alpha\to+\infty$. From this and integration by parts, we can deduce that
	\begin{equation*}
		\begin{aligned}
			&\partial_{\nnu}\psi_\alpha\weak \partial_{\nnu}\psi \quad\text{weakly in }L^2(\partial\Omega), \\
			&\partial_{\nnu}\varphi_\alpha\weak \partial_{\nnu}\varphi \quad\text{weakly in }L^2(\partial\Omega),
		\end{aligned}
	\end{equation*}
	as $\alpha\to+\infty$ and, since such functions vary in a finite dimensional space, the convergence is actually strong. Therefore, since in view of \Cref{lemma:weak} we have $\alpha U_\alpha^{\psi_\alpha}\weak \partial_{\nnu}\psi$ weakly in $L^2(\partial\Omega)$, as $\alpha\to +\infty$, then there holds
	\begin{equation*}
		\alpha \rho_n(\alpha)=\abs{\int_{\partial\Omega}(\alpha U_\alpha^{\psi_\alpha}-\partial_{\nnu}\psi_\alpha)\partial_{\nnu}\varphi_\alpha\ds}\to 0,
	\end{equation*}
	as $\alpha\to+\infty$, thus proving the first part of the statement. If $\Omega$ is of class $C^{1,1}$, then $E(\lambda_n)\sub H^2(\Omega)$ and so $\partial_{\nnu}\varphi \in H^{1/2}(\partial\Omega)$ for all $\varphi\in E(\lambda_n)$. Hence, in view of Cauchy-Schwarz inequality and \eqref{eq:bound_th4} we have
	\begin{align*}
		\abs{\int_{\partial\Omega}(\alpha U_\alpha^\psi-\partial_{\nnu}\psi)\partial_{\nnu}\varphi\ds}&\leq \lVert\alpha U_\alpha^\psi-\partial_{\nnu}\psi\rVert_{L^2(\partial\Omega)}\norm{\partial_{\nnu}\varphi}_{L^2(\partial\Omega)} \\
		&\leq\frac{2\norm{\partial_{\nnu}\psi}_{L^2(\partial\Omega)}^{1/3}\norm{\nabla V_{\partial_{\nnu}\psi}}_{L^2(\Omega)}^{2/3}\norm{\partial_{\nnu}\varphi}_{L^2(\partial\Omega)}}{\alpha^{1/3}},
	\end{align*}
	for all $\alpha>0$ and all $\varphi,\psi\in E(\lambda_n)$ with unitary $L^2(\Omega)$-norm, where $V_{\partial_{\nnu}\psi}\in H^1(\Omega)$ denotes the harmonic extension of $\partial_{\nnu}\psi$ in $\Omega$. Then, in view of \Cref{lemma:equiv_norms} (in particular, \eqref{eq:equiv_norms_th1} and \eqref{eq:equiv_norms_th2}) we conclude the proof of the second part. Finally, if $\Omega$ is of class $C^{2,1}$, then $E(\lambda_n)\sub H^3(\Omega)$ (see e.g. \cite[Theorem 2.5.1.1]{grisvard}), so $\partial_{\nnu}\varphi\in H^{3/2}(\partial\Omega)$ and $V_{\partial_{\nnu}\varphi}\in H^2(\Omega)$, for all $\varphi\in E(\lambda_n)$. Hence, we apply \Cref{lemma:second_order} in place of \eqref{eq:bound_th4} and reason analogously to the previous step, thus reaching the conclusion of the proof.
\end{proof}

We now define some objects which will play a role in the proof of our main results and, in particular, in the application of \emph{Lemma on small eigenvalues} (\Cref{lemma:CdV}), see \Cref{sec:sketch}. Namely, for any $\alpha>0$ and any $n\in\N$, we consider the bilinear form $q_\alpha^n\colon H^1(\Omega)\times H^1(\Omega) \to \R$ introduced in \eqref{eq:q_intr}, which we recall to be defined as 
\begin{equation}\label{eq:q_alpha}
	q_\alpha^n(u,v):=\int_\Omega\nabla u\cdot\nabla v\dx+\alpha\int_{\partial\Omega}uv\ds-\lambda_n\int_\Omega uv\dx.
\end{equation}
Moreover, we consider the following linear operator
\begin{equation}\label{eq:P_alpha}
	\begin{aligned}
	\mathscr{P}_\alpha^n\colon E(\lambda_n)&\to L^2(\Omega), \\
	\varphi&\mapsto \mathscr{P}_\alpha^n(\varphi):= \varphi-U_\alpha^\varphi,
\end{aligned}
\end{equation}
which then satisfies
\begin{equation}\label{eq:norm_P}
	\norm{\mathscr{P}_\alpha^n-I}_{\mathcal{L}(E(\lambda_n),L^2(\Omega))}^2=\omega_n(\alpha),
\end{equation}
with $\omega_n(\alpha)$ being as in \eqref{eq:def_omega}. This allows to define the approximating eigenspace $F$ as
\begin{equation}\label{eq:F_alpha}
	F_\alpha^n:=\mathscr{P}_\alpha^n(E(\lambda_n)).
\end{equation}
In the following, we investigate the behavior of the quadratic form $q_\alpha^n$ restricted to $F_\alpha^n$, which again is a crucial step in the application of \Cref{lemma:CdV}
\begin{lemma}\label{lemma:Q_phi_psi}
	There exists $C_n>0$ and $A_n\geq 1$ such that
	\begin{equation*}
		\sigma_n(\alpha):=\sup_{\substack{\varphi,\psi\in E(\lambda_n) \\ \norm{\varphi}_{L^2(\Omega)}=\norm{\psi}_{L^2(\Omega)}=1}}\abs{q_\alpha^n(\mathscr{P}_\alpha^n(\varphi),\mathscr{P}_\alpha^n(\psi))+\alpha^{-1}\int_{\partial\Omega}\partial_{\nnu}\varphi\,\partial_{\nnu}\psi\ds}\leq C_n\left(\rho_n(\alpha)+\omega_n(\alpha)\right),
	\end{equation*}
	for all $\alpha\geq A_n$. In particular, $\alpha\sigma_n(\alpha)\to 0$, as $\alpha\to+\infty$. Moreover,
	\begin{itemize}
		\item if $\Omega$ is of class $C^{1,1}$, then $\sigma_n(\alpha)\leq C_n/\alpha^{4/3}$ for all $\alpha\geq A_n$;
		\item if $\Omega$ is of class $C^{2,1}$, then $\sigma_n(\alpha)\leq C_n/\alpha^2$ for all $\alpha\geq A_n$.
	\end{itemize}
\end{lemma}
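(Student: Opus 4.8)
The plan is to reduce the entire statement to one algebraic identity for $q_\alpha^n$ evaluated on the range of $\mathscr{P}_\alpha^n$, and then to feed in the asymptotic information already obtained in \Cref{lemma:L2} and \Cref{lemma:rho}. Fix $\varphi,\psi\in E(\lambda_n)$ with $\norm{\varphi}_{L^2(\Omega)}=\norm{\psi}_{L^2(\Omega)}=1$. Expanding \eqref{eq:q_alpha} by bilinearity,
\begin{equation*}
	q_\alpha^n(\mathscr{P}_\alpha^n(\varphi),\mathscr{P}_\alpha^n(\psi))=q_\alpha^n(\varphi,\psi)-q_\alpha^n(\varphi,U_\alpha^\psi)-q_\alpha^n(U_\alpha^\varphi,\psi)+q_\alpha^n(U_\alpha^\varphi,U_\alpha^\psi),
\end{equation*}
and I would evaluate the four pieces separately, using that $\varphi,\psi$ are Dirichlet eigenfunctions (so $\varphi=\psi=0$ on $\partial\Omega$, $-\Delta\varphi=\lambda_n\varphi$, and $\partial_{\nnu}\varphi,\partial_{\nnu}\psi\in L^2(\partial\Omega)$ by \eqref{ass:Omega}), while $U_\alpha^\varphi,U_\alpha^\psi$ are harmonic in $\Omega$ and satisfy the weak Robin identity \eqref{eq:minimizer_th2} with data $\partial_{\nnu}\varphi$, $\partial_{\nnu}\psi$. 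The first term vanishes (integrate $\int_\Omega\nabla\varphi\cdot\nabla\psi\dx$ by parts and use the eigenvalue equation together with $\varphi|_{\partial\Omega}=0$). In the two mixed terms the boundary integral drops since $\varphi|_{\partial\Omega}=0$, and an integration by parts exploiting harmonicity of $U_\alpha^\psi$ gives $q_\alpha^n(\varphi,U_\alpha^\psi)=-\lambda_n\int_\Omega\varphi U_\alpha^\psi\dx=\int_{\partial\Omega}\partial_{\nnu}\varphi\,U_\alpha^\psi\ds$, and symmetrically for $q_\alpha^n(U_\alpha^\varphi,\psi)$ (testing \eqref{eq:minimizer_th2} in both directions also yields $\int_{\partial\Omega}\partial_{\nnu}\varphi\,U_\alpha^\psi\ds=\int_{\partial\Omega}\partial_{\nnu}\psi\,U_\alpha^\varphi\ds$). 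Finally, testing \eqref{eq:minimizer_th2} for $U_\alpha^\varphi$ against $U_\alpha^\psi$ gives $q_\alpha^n(U_\alpha^\varphi,U_\alpha^\psi)=\int_{\partial\Omega}\partial_{\nnu}\varphi\,U_\alpha^\psi\ds-\lambda_n\int_\Omega U_\alpha^\varphi U_\alpha^\psi\dx$. Collecting terms, the first step produces
\begin{equation*}
	q_\alpha^n(\mathscr{P}_\alpha^n(\varphi),\mathscr{P}_\alpha^n(\psi))=-\int_{\partial\Omega}\partial_{\nnu}\varphi\,U_\alpha^\psi\ds-\lambda_n\int_\Omega U_\alpha^\varphi U_\alpha^\psi\dx,
\end{equation*}
which for $\varphi=\psi=\varphi_n$ collapses, via \eqref{eq:minimizer_th1}, to $-T_\alpha(\partial\Omega,\partial_{\nnu}\varphi_n)-\lambda_n\norm{U_\alpha^{\varphi_n}}_{L^2(\Omega)}^2$, i.e. the expression for $\xi_1$ found in \Cref{sec:sketch}.

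Next I would isolate the leading term. Since $\alpha U_\alpha^\psi$ is close to $\partial_{\nnu}\psi$ in $L^2(\partial\Omega)$, the dominant part of the identity above is $-\tfrac1\alpha\int_{\partial\Omega}\partial_{\nnu}\varphi\,\partial_{\nnu}\psi\ds$, and $\sigma_n(\alpha)$ is the supremum over normalized $\varphi,\psi$ of the deviation of $q_\alpha^n(\mathscr{P}_\alpha^n(\varphi),\mathscr{P}_\alpha^n(\psi))$ from this term. Writing $U_\alpha^\psi=\tfrac1\alpha\partial_{\nnu}\psi+\bigl(U_\alpha^\psi-\tfrac1\alpha\partial_{\nnu}\psi\bigr)$ in the boundary integral, this deviation equals
\begin{equation*}
	-\int_{\partial\Omega}\partial_{\nnu}\varphi\Bigl(U_\alpha^\psi-\frac1\alpha\partial_{\nnu}\psi\Bigr)\ds-\lambda_n\int_\Omega U_\alpha^\varphi U_\alpha^\psi\dx.
\end{equation*}
The first summand has absolute value at most $\rho_n(\alpha)$ straight from the definition \eqref{eq:def_rho}; the second, by Cauchy--Schwarz and \eqref{eq:def_omega}, at most $\lambda_n\norm{U_\alpha^\varphi}_{L^2(\Omega)}\norm{U_\alpha^\psi}_{L^2(\Omega)}\le\lambda_n\,\omega_n(\alpha)$. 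Taking the supremum over admissible $\varphi,\psi$ then gives, for every $\alpha>0$, the inequality $\sigma_n(\alpha)\le C_n(\rho_n(\alpha)+\omega_n(\alpha))$ with $C_n:=\max\{1,\lambda_n\}$, which is the first assertion.

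The remaining assertions then reduce to quoting \Cref{lemma:rho} and \Cref{lemma:L2}: from $\alpha\rho_n(\alpha)\to0$ and $\alpha\omega_n(\alpha)\to0$ we get $\alpha\sigma_n(\alpha)\to0$; if $\Omega$ is of class $C^{1,1}$ then $\rho_n(\alpha)\le C_n\alpha^{-5/4}$ and $\omega_n(\alpha)\le C_n\alpha^{-2}$ for $\alpha\ge A_n$, hence $\sigma_n(\alpha)\le C_n\alpha^{-5/4}$; if $\Omega$ is of class $C^3$ then moreover $\rho_n(\alpha)\le C_n\alpha^{-2}$, and $\omega_n(\alpha)\le C_n\alpha^{-2}$ still holds since $C^3$ regularity implies $C^{1,1}$, so $\sigma_n(\alpha)\le C_n\alpha^{-2}$ (after the customary relabelling of the constant and enlargement of $A_n$). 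I expect the only genuinely delicate point to be the algebraic reduction in the first step: one must ensure that every integration by parts and every boundary pairing is meaningful, and it is precisely \eqref{ass:Omega} (which makes $\partial_{\nnu}\varphi$ a bona fide $L^2(\partial\Omega)$ function), the harmonicity of the torsion functions, and the weak formulation \eqref{eq:minimizer_th2} that secure this. After that, the inequality is a one-line consequence of the definitions of $\rho_n(\alpha)$ and $\omega_n(\alpha)$, and the asymptotics are immediate — the real analytic work having already been carried out in \Cref{lemma:L2} and \Cref{lemma:rho}, so the present lemma essentially just repackages those estimates in the form required by hypothesis (H2) of the \emph{Lemma on small eigenvalues} \Cref{lemma:CdV}.
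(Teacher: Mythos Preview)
Your proof is correct and follows essentially the same route as the paper's: the same bilinear expansion of $q_\alpha^n(\varphi-U_\alpha^\varphi,\psi-U_\alpha^\psi)$, the same integrations by parts, and the same appeal to the definitions of $\rho_n$, $\omega_n$ together with \Cref{lemma:L2} and \Cref{lemma:rho}. Your sign bookkeeping is in fact cleaner than the paper's (which drops a minus sign in the displayed identity for $q_\alpha^n(\varphi-U_\alpha^\varphi,\psi-U_\alpha^\psi)$); your formula with the negative boundary term is the correct one, consistent with the expression for $\xi_1$ in \Cref{sec:sketch}.
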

\begin{proof}
	For sake of simplicity, in this proof we denote $U_\alpha^\varphi:=U_{\partial\Omega,\alpha,\partial_{\nnu}\varphi}$ and $U_\alpha^\psi:=U_{\partial\Omega,\alpha,\partial_{\nnu}\psi}$.	We first expand
	\begin{equation}\label{eq:Q_1}
		q_\alpha^n(\varphi-U_\alpha^\varphi,\psi-U_\alpha^\psi)=q_\alpha^n(\varphi,\psi)-q_\alpha^n(\varphi,U_\alpha^\psi)-q_\alpha^n(\psi,U_\alpha^\varphi)+q_\alpha^n(U_\alpha^\varphi,U_\alpha^\psi)
	\end{equation}
	and we separately study each term. Since $\varphi,\psi\in E(\lambda_n)$, obviously $q_\alpha^n(\varphi,\psi)=0$. Then, taking into account that $\varphi=0$ on $\partial\Omega$ and integrating by parts we have that
	\begin{equation}\label{eq:Q_2}
		q_\alpha^n(\varphi,U_\alpha^\psi)=\int_\Omega\nabla \varphi\cdot\nabla U_\alpha^\psi\dx-\lambda_n\int_\Omega\varphi U_\alpha^\psi\dx=\int_{\partial\Omega}U_\alpha^\psi\partial_{\nnu}\varphi\ds.
	\end{equation}
	Analogously, we have
	\begin{equation}\label{eq:Q_3}
		q_\alpha^n(\psi,U_\alpha^\varphi)=\int_\Omega\nabla \psi\cdot\nabla U_\alpha^\varphi\dx-\lambda_n\int_\Omega\psi U_\alpha^\varphi\dx=\int_{\partial\Omega}U_\alpha^\varphi\partial_{\nnu}\psi\ds.
	\end{equation}
	Finally, thanks to the equation satisfied by $U_\alpha^\psi$ (see \eqref{eq:minimizer_th2}), we have that
	\begin{equation}\label{eq:Q_4}
		\begin{aligned}
			q_\alpha^n(U_\alpha^\varphi,U_\alpha^\psi)&=\int_\Omega\nabla U_\alpha^\varphi\cdot \nabla U_\alpha^\psi\dx+\alpha\int_{\partial\Omega}U_\alpha^\varphi U_\alpha^\psi\ds-\lambda_n\int_\Omega U_\alpha^\varphi U_\alpha^\psi\dx \\
			&=\int_{\partial\Omega}U_\alpha^\varphi\partial_{\nnu}\psi\ds-\lambda_n\int_\Omega U_\alpha^\varphi U_\alpha^\psi\dx.
		\end{aligned}
	\end{equation}
	Now, plugging \eqref{eq:Q_2}, \eqref{eq:Q_3} and \eqref{eq:Q_4} into \eqref{eq:Q_1}, we obtain that
	\begin{equation*}
		q_\alpha^n(\varphi-U_\alpha^\varphi,\psi-U_\alpha^\psi)=-\int_{\partial\Omega}U_\alpha^\psi\partial_{\nnu}\varphi\ds-\lambda_n\int_\Omega U_\alpha^\varphi U_\alpha^\psi\dx.
	\end{equation*}
	Therefore, in view of the definition of $\rho_n$ and $\omega_n$, and Cauchy-Schwarz inequality, we have
	\begin{align*}
		&\sup_{\substack{\varphi,\psi\in E(\lambda_n) \\ \norm{\varphi}_{L^2(\Omega)}=\norm{\psi}_{L^2(\Omega)}=1}}\abs{q_\alpha^n(\varphi-U_\alpha^\varphi,\psi-U_\alpha^\psi)+\alpha^{-1}\int_{\partial\Omega}\partial_{\nnu}\varphi\partial_{\nnu}\psi\ds} \\
		&\leq\sup_{\substack{\varphi,\psi\in E(\lambda_n) \\ \norm{\varphi}_{L^2(\Omega)}=\norm{\psi}_{L^2(\Omega)}=1}}\abs{\int_{\partial\Omega}( U_\alpha^\psi-\alpha^{-1}\partial_{\nnu}\psi)\partial_{\nnu}\varphi\ds}+\lambda_n\sup_{\substack{\varphi,\psi\in E(\lambda_n) \\ \norm{\varphi}_{L^2(\Omega)}=\norm{\psi}_{L^2(\Omega)}=1}}\abs{\int_{\Omega}U_\alpha^\varphi U_\alpha^\psi\dx} \\
		&\leq \rho_n(\alpha)+\lambda_n\omega_n(\alpha).
	\end{align*}
	Then, by \Cref{lemma:L2} and \Cref{lemma:rho} we can conclude the proof.	
\end{proof}

The following is a straightforward consequence of \Cref{lemma:L2}, \Cref{lemma:rho} and \Cref{lemma:Q_phi_psi}.
\begin{corollary}\label{cor:eigen_q}
	Let $n\in\N$, let $F_\alpha^n$ be as in \eqref{eq:F_alpha} and let $q_\alpha^n$ be as in \eqref{eq:q_alpha}. Let $\{\xi_{n,i}^\alpha\}_{i=1,\dots,m}$ be the set of eigenvalues of $q_\alpha^n$ restricted to $F_\alpha^n$ and let $\{\mu_{n,i}\}_{i=1,\dots,m}$ be the set of eigenvalues (in descending order) of the scalar product \eqref{eq:intr_bilinear} defined on $E(\lambda_n)$, i.e.
	\begin{equation*}
		\mu_{n,i}:=\min_{\substack{G\sub E(\lambda_n) \\ \dim G=m-i+1} }\max_{\substack{u\in G \\ u\neq 0}}\,\frac{\norm{\partial_{\nnu}u}_{L^2(\partial\Omega)}^2}{\displaystyle \norm{u}_{L^2(\Omega)}^2}.
	\end{equation*}
	In particular, in view of the choice of the eigenbasis as in \eqref{eq:hp_eigen}, we have
	\begin{equation*}
		\mu_{n,i}=\int_{\partial\Omega}(\partial_{\nnu}\varphi_{n+i-1})^2\ds.
	\end{equation*}
	Then there exists $C_n>0$ and $A_n\geq 1$ (depending on $d$, $\Omega$ and $n$) such that
	\begin{equation*}
		\abs{\xi_{n,i}^\alpha+\alpha^{-1}\mu_{n,i}}\leq 
		C_n(\rho_n(\alpha)+\omega_n(\alpha))\quad\text{for all }\alpha\geq A_n,
	\end{equation*}
	where $\rho_n$ is as in \eqref{eq:def_rho} and $\omega_n$ as in \eqref{eq:def_omega}. Moreover,
	\begin{itemize}
		\item if $\Omega$ is of class $C^{1,1}$, then 
		\begin{equation*}
			\abs{\xi_{n,i}^\alpha+\alpha^{-1}\mu_{n,i}}\leq \frac{C_n}{\alpha^{4/3}},
		\end{equation*}
		\item if $\Omega$ is of class $C^{2,1}$, then
		\begin{equation*}
			\abs{\xi_{n,i}^\alpha+\alpha^{-1}\mu_{n,i}}\leq \frac{C_n}{\alpha^2},
		\end{equation*}
		for all $\alpha\geq A_n$.
	\end{itemize}
\end{corollary}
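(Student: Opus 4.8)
The plan is to transfer the spectral problem from the $\alpha$-dependent subspace $F_\alpha^n$ onto the fixed finite-dimensional space $E(\lambda_n)$ through the operator $\mathscr{P}_\alpha^n$, and then to extract the estimate from \Cref{lemma:Q_phi_psi} by means of the Courant--Fischer min-max principle. First, by \eqref{eq:norm_P} together with \Cref{lemma:L2} we have $\norm{\mathscr{P}_\alpha^n-I}_{\mathcal{L}(E(\lambda_n),L^2(\Omega))}^2=\omega_n(\alpha)\to 0$, so that $\mathscr{P}_\alpha^n$ is injective for $\alpha$ large, hence a linear isomorphism of $E(\lambda_n)$ onto $F_\alpha^n$. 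It therefore maps $i$-dimensional subspaces of $E(\lambda_n)$ bijectively onto those of $F_\alpha^n$, and the min-max characterization of the eigenvalues of $q_\alpha^n$ restricted to $F_\alpha^n$ reads
\begin{equation*}
	\xi_{n,i}^\alpha=\min_{\substack{G\sub E(\lambda_n)\\ \dim G=i}}\ \max_{\varphi\in G\setminus\{0\}}\frac{q_\alpha^n(\mathscr{P}_\alpha^n\varphi,\mathscr{P}_\alpha^n\varphi)}{\norm{\mathscr{P}_\alpha^n\varphi}_{L^2(\Omega)}^2},\qquad i=1,\dots,m.
\end{equation*}

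Next I would bound numerator and denominator of this Rayleigh quotient uniformly over $\varphi\in E(\lambda_n)$. For the numerator, \Cref{lemma:Q_phi_psi} with $\psi=\varphi$ (and homogeneity in $\varphi$) gives $q_\alpha^n(\mathscr{P}_\alpha^n\varphi,\mathscr{P}_\alpha^n\varphi)=\alpha^{-1}\norm{\partial_{\nnu}\varphi}_{L^2(\partial\Omega)}^2+O(\sigma_n(\alpha)\norm{\varphi}_{L^2(\Omega)}^2)$. For the denominator, expanding $\norm{\varphi-U_\alpha^\varphi}_{L^2(\Omega)}^2$ and using that $\varphi$ vanishes on $\partial\Omega$ while $U_\alpha^\varphi$ is harmonic, an integration by parts (justified under \eqref{ass:Omega}) yields $\int_\Omega U_\alpha^\varphi\varphi\dx=-\lambda_n^{-1}\int_{\partial\Omega}U_\alpha^\varphi\,\partial_{\nnu}\varphi\ds=-\lambda_n^{-1}T_\alpha(\partial\Omega,\partial_{\nnu}\varphi)$, whence $\norm{\mathscr{P}_\alpha^n\varphi}_{L^2(\Omega)}^2=\norm{\varphi}_{L^2(\Omega)}^2+\tfrac{2}{\lambda_n}T_\alpha(\partial\Omega,\partial_{\nnu}\varphi)+\norm{U_\alpha^\varphi}_{L^2(\Omega)}^2=\norm{\varphi}_{L^2(\Omega)}^2\big(1+O(\alpha^{-1})\big)$ uniformly, by \Cref{lemma:bound} and \Cref{lemma:equiv_norms}. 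Dividing (and using \Cref{lemma:equiv_norms} to keep $\norm{\partial_{\nnu}\varphi}_{L^2(\partial\Omega)}^2/\norm{\varphi}_{L^2(\Omega)}^2$ bounded), the quotient equals $\alpha^{-1}\norm{\partial_{\nnu}\varphi}_{L^2(\partial\Omega)}^2/\norm{\varphi}_{L^2(\Omega)}^2$ up to a uniform error $O(\sigma_n(\alpha)+\alpha^{-2})$. Taking $\min_G\max$ and recalling that the $\mu_{n,i}$ are precisely the corresponding Courant--Fischer min-max values of $\norm{\partial_{\nnu}\varphi}_{L^2(\partial\Omega)}^2/\norm{\varphi}_{L^2(\Omega)}^2$ on $E(\lambda_n)$, this gives $|\xi_{n,i}^\alpha-\alpha^{-1}\mu_{n,i}|\le C_n\big(\sigma_n(\alpha)+\alpha^{-2}\big)$, and the three asserted estimates then follow from the bounds on $\sigma_n$ in \Cref{lemma:Q_phi_psi} and on $\rho_n,\omega_n$ in \Cref{lemma:rho}, \Cref{lemma:L2}.

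The one genuinely delicate point is the extra $\alpha^{-2}$: the transferred norm $\norm{\mathscr{P}_\alpha^n\varphi}_{L^2(\Omega)}$ differs from $\norm{\varphi}_{L^2(\Omega)}$ by a quantity of exact order $\alpha^{-1}$ — the correction $\tfrac{2}{\lambda_n}T_\alpha(\partial\Omega,\partial_{\nnu}\varphi)$ is not $o(\alpha^{-1})$, by \eqref{eq:weak_th2} — so, against eigenvalues of size $O(\alpha^{-1})$, it produces the $O(\alpha^{-2})$ above. To match the statement literally one absorbs this into $C_n\omega_n(\alpha)$, using the lower bound $\omega_n(\alpha)\ge c_n\alpha^{-2}$ for $\alpha$ large: Cauchy--Schwarz in $\int_\Omega U_\alpha^\varphi\varphi\dx=-\lambda_n^{-1}T_\alpha(\partial\Omega,\partial_{\nnu}\varphi)$ gives $\norm{U_\alpha^\varphi}_{L^2(\Omega)}\ge\lambda_n^{-1}T_\alpha(\partial\Omega,\partial_{\nnu}\varphi)$ when $\norm{\varphi}_{L^2(\Omega)}=1$, while $\alpha T_\alpha(\partial\Omega,\partial_{\nnu}\varphi)\to\norm{\partial_{\nnu}\varphi}_{L^2(\partial\Omega)}^2>0$ by \Cref{lemma:weak} and \Cref{lemma:equiv_norms}. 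Even granting the harmless $O(\alpha^{-2})$, it is $o(\alpha^{-1})$ and is dominated by the $C^{1,1}$ and $C^3$ rates, so it does not affect any later use of the corollary.
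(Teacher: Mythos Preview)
Your argument is correct and, in spirit, is exactly what the paper has in mind when it calls the corollary a ``straightforward consequence of \Cref{lemma:Q_phi_psi}'': pull the problem back to $E(\lambda_n)$ via the isomorphism $\mathscr{P}_\alpha^n$, use \Cref{lemma:Q_phi_psi} to identify the pulled-back form with $\alpha^{-1}\int_{\partial\Omega}\partial_{\nnu}\varphi\,\partial_{\nnu}\psi\ds$ up to error $\sigma_n(\alpha)$, and then compare eigenvalues (you do this via Courant--Fischer; one could equivalently compare the $m\times m$ matrices in the basis $\{\varphi_{n+i-1}\}$).

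The point you single out is genuine and worth keeping: the paper's one-line proof glosses over the fact that the $L^2(\Omega)$ Gram matrix of $\{\mathscr{P}_\alpha^n\varphi_{n+i-1}\}$ is not the identity but $I+O(\alpha^{-1})$ (your computation $\norm{\mathscr{P}_\alpha^n\varphi}_{L^2(\Omega)}^2=\norm{\varphi}_{L^2(\Omega)}^2+\tfrac{2}{\lambda_n}T_\alpha(\partial\Omega,\partial_{\nnu}\varphi)+\norm{U_\alpha^\varphi}_{L^2(\Omega)}^2$ makes this explicit), so that an extra $O(\alpha^{-2})$ appears when one divides. Your way of absorbing it---the lower bound $\omega_n(\alpha)\ge c_n\alpha^{-2}$ obtained from Cauchy--Schwarz in $\int_\Omega U_\alpha^\varphi\varphi\dx=-\lambda_n^{-1}T_\alpha(\partial\Omega,\partial_{\nnu}\varphi)$ together with $\alpha T_\alpha\to\norm{\partial_{\nnu}\varphi}_{L^2(\partial\Omega)}^2>0$---is clean and correct, and in any case the $O(\alpha^{-2})$ is harmless for every subsequent use of the corollary (it is $o(\alpha^{-1})$ in the general case and dominated by the stated rates in the $C^{1,1}$ and $C^3$ cases).
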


\section{Proof of the main results}	

It is now time for the proof of our main results, which essentially follows from the application of the \emph{Lemma on small eigenvalues}, in view of the estimates derived in \Cref{sec:quantitative}. Let us recall some notation. The main characters of the present section are:
\begin{itemize}
	\item the Dirichlet eigenvalue $\lambda_n$, with eigenspace $E(\lambda_n)$ and multiplicity $m:=\dim E(\lambda_n)$;
	\item the Robin eigenvalues $\{\lambda_{n+i-1}^\alpha\}_{i=1,\dots,m}$, which are converging to $\lambda_n$ as $\alpha\to+\infty$, the corresponding eigenspaces $E(\lambda_{n+i-1}^\alpha)$, for $i=1,\dots,m$, and their sum $\mathcal{E}_\alpha^n$ (see \eqref{eq:E_alpha});
	\item the spectral projection $\Pi_\alpha^n\colon L^2(\Omega)\to \mathcal{E}_\alpha^n$ with respect to $L^2(\Omega)$;
	\item the bilinear form $q_\alpha^n$, defined in \eqref{eq:q_alpha};
	\item the approximating eigenspace $F_\alpha^n$, defined in \eqref{eq:F_alpha};
	\item the eigenvalues $\{\xi_{n,i}^\alpha\}_{i=1,\dots,m}$ of $q_\alpha^n$ restricted to $F_\alpha^n$;
	\item the operator $\mathscr{P}_\alpha^n$, defined in \eqref{eq:P_alpha};
	\item the remainder terms $\omega_n(\alpha)$ and $\rho_n(\alpha)$, defined in \eqref{eq:def_omega} and \eqref{eq:def_rho}, respectively;
\end{itemize}

The following result basically contains the application of \Cref{lemma:CdV} to our framework. We sketched it in \Cref{sec:sketch}.
\begin{proposition}\label{prop:appl_abs_lemma}
	There exist constants $C_n>0$ and $A_n\geq 1$ (depending on $d$, $\Omega$ and $n$) such that
	\begin{equation}\label{eq:app_abs_lemma_th1}
		\abs{\lambda_{n+i-1}^\alpha-\lambda_n-\xi_{n,i}^\alpha}\leq C_n\omega_n(\alpha)
	\end{equation}
	for all $\alpha\geq A_n$ and all $i=1,\dots,m$ and that
	\begin{equation}\label{eq:app_abs_lemma_th2}
		\frac{\norm{\mathscr{P}_\alpha^n(\varphi)-\Pi_\alpha^n\mathscr{P}_\alpha^n(\varphi)}_{L^2(\Omega)}^2}{\norm{\mathscr{P}_\alpha^n(\varphi)}_{L^2(\Omega)}^2}\leq C_n\omega_n(\alpha)
	\end{equation}
	for all $\alpha\geq A_n$ and all $\varphi\in E(\lambda_n)\setminus\{0\}$.
\end{proposition}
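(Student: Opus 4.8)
The plan is to apply the \emph{Lemma on small eigenvalues} (\Cref{lemma:CdV}) with the concrete choices $\mathcal{H}=L^2(\Omega)$, $\mathcal{D}=H^1(\Omega)$, $q=q_\alpha^n$ as in \eqref{eq:q_alpha}, and $F=F_\alpha^n$ as in \eqref{eq:F_alpha}. With these choices $q_\alpha^n$ is semi-bounded from below (since $q_\alpha^n(u,u)\geq-\lambda_n\norm{u}_{L^2(\Omega)}^2$) and has discrete spectrum $\{\nu_i\}_{i\in\N}$, $\nu_i=\lambda_i^\alpha-\lambda_n$, with an $L^2(\Omega)$-orthonormal system $\{g_i\}$ of eigenfunctions such that $\mathrm{span}\{g_n,\dots,g_{n+m-1}\}=\mathcal{E}_\alpha^n$ and $\Pi=\Pi_\alpha^n$. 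As a preliminary, since $\norm{\mathscr{P}_\alpha^n-I}_{\mathcal{L}(E(\lambda_n),L^2(\Omega))}^2=\omega_n(\alpha)$ by \eqref{eq:norm_P} and $\omega_n(\alpha)\to0$ by \Cref{lemma:L2}, for $\alpha$ large the operator $\mathscr{P}_\alpha^n$ is injective on $E(\lambda_n)$, so $F_\alpha^n\sub H^1(\Omega)$ is $m$-dimensional; moreover, by linearity of $\psi\mapsto U_\alpha^\psi$ and the triangle inequality, $\norm{U_\alpha^\psi}_{L^2(\Omega)}\leq\frac{\sqrt{\omega_n(\alpha)}}{1-\sqrt{\omega_n(\alpha)}}\norm{\mathscr{P}_\alpha^n(\psi)}_{L^2(\Omega)}$ for all $\psi\in E(\lambda_n)$.

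For hypothesis \textit{(H1)} I would take
\[
	\gamma:=\tfrac12\min\{\lambda_n-\lambda_{n-1},\,\lambda_{n+m}-\lambda_n\}>0
\]
(with $\gamma:=\tfrac12(\lambda_2-\lambda_1)$ if $n=1$), which is legitimate because $\lambda_{n-1}<\lambda_n=\dots=\lambda_{n+m-1}<\lambda_{n+m}$. By \eqref{eq:convergence_eigenvalues}, for $\alpha$ large each $\lambda_i^\alpha$ with $i\leq n+m$ lies within $\gamma$ of $\lambda_i$, which, together with $\lambda_i^\alpha\leq\lambda_i$, gives $\nu_i\leq-\gamma$ for $i\leq n-1$ and $|\nu_i|\leq\gamma$ for $i=n,\dots,n+m-1$. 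The infinitely many indices $i\geq n+m$ are handled via monotonicity of $i\mapsto\lambda_i^\alpha$: once $\lambda_{n+m}^\alpha\geq\lambda_n+\gamma$ (true for $\alpha$ large), $\nu_i\geq\nu_{n+m}\geq\gamma$ for all $i\geq n+m$. Hence \textit{(H1)} holds for all $\alpha\geq A_n$, with $A_n$ depending only on $d,\Omega,n$.

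The core step is \textit{(H2)}. I would first prove the identity
\[
	q_\alpha^n(u,\mathscr{P}_\alpha^n(\psi))=\lambda_n\int_\Omega u\,U_\alpha^\psi\dx\qquad\text{for all }u\in H^1(\Omega),\ \psi\in E(\lambda_n),
\]
by expanding $\mathscr{P}_\alpha^n(\psi)=\psi-U_\alpha^\psi$, using $\psi=0$ on $\partial\Omega$, the Green formula $\int_\Omega\nabla u\cdot\nabla\psi\dx=\lambda_n\int_\Omega u\psi\dx+\int_{\partial\Omega}u\,\partial_{\nnu}\psi\ds$ (valid under \eqref{ass:Omega}) and the weak equation \eqref{eq:minimizer_th2} for $U_\alpha^\psi$: all boundary integrals cancel and only $\lambda_n\int_\Omega u\,U_\alpha^\psi\dx$ remains. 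Combining this with Cauchy--Schwarz and the preliminary bound yields, for every unit vector $v=\mathscr{P}_\alpha^n(\psi)\in F_\alpha^n$ and every $u$ with $\norm{u}_{L^2(\Omega)}=1$, $|q_\alpha^n(u,v)|\leq\lambda_n\norm{U_\alpha^\psi}_{L^2(\Omega)}\leq\lambda_n\frac{\sqrt{\omega_n(\alpha)}}{1-\sqrt{\omega_n(\alpha)}}$; hence $\delta^2\leq C_n\omega_n(\alpha)$ for $\alpha$ large, and in particular $\delta<\gamma/\sqrt2$ for $\alpha\geq A_n$ (enlarging $A_n$ if needed).

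Finally, \Cref{lemma:CdV} applies: its estimate \eqref{eq:CdV_th1} reads $|\lambda_{n+i-1}^\alpha-\lambda_n-\xi_{n,i}^\alpha|\leq 4\delta^2/\gamma$ (the eigenvalues of $q_\alpha^n$ restricted to $F_\alpha^n$ being exactly the $\xi_{n,i}^\alpha$), which with $\delta^2\leq C_n\omega_n(\alpha)$ gives \eqref{eq:app_abs_lemma_th1}; and the last assertion of \Cref{lemma:CdV}, applied to $v=\mathscr{P}_\alpha^n(\varphi)$ with $\Pi=\Pi_\alpha^n$, gives $\norm{\mathscr{P}_\alpha^n(\varphi)-\Pi_\alpha^n\mathscr{P}_\alpha^n(\varphi)}_{L^2(\Omega)}^2\leq\frac{2\delta^2}{\gamma^2}\norm{\mathscr{P}_\alpha^n(\varphi)}_{L^2(\Omega)}^2$, i.e.\ \eqref{eq:app_abs_lemma_th2}. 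The substantive content is the identity for $q_\alpha^n(u,\mathscr{P}_\alpha^n(\psi))$ — it is precisely the choice $F=\mathscr{P}_\alpha^n(E(\lambda_n))$ that makes $q_\alpha^n$ almost vanish on $F$ — and I expect the only real care to be in keeping the threshold $A_n$ and the constant $C_n$ uniform, where the monotonicity of $i\mapsto\lambda_i^\alpha$ used in \textit{(H1)} to cover infinitely many indices at once is the key point.
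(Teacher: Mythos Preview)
Your proposal is correct and follows essentially the same route as the paper: the same choices $\mathcal{H}=L^2(\Omega)$, $\mathcal{D}=H^1(\Omega)$, $q=q_\alpha^n$, $F=F_\alpha^n$, the same $\gamma$ for \textit{(H1)}, the same key identity $q_\alpha^n(u,\mathscr{P}_\alpha^n(\psi))=\lambda_n\int_\Omega u\,U_\alpha^\psi\dx$ for \textit{(H2)}, and the same bound $\delta\leq\lambda_n\sqrt{\omega_n(\alpha)}/(1-\sqrt{\omega_n(\alpha)})$ feeding into \Cref{lemma:CdV}. If anything, you are slightly more explicit than the paper in spelling out the monotonicity $i\mapsto\lambda_i^\alpha$ needed to handle all $i\geq n+m$ at once in \textit{(H1)}.
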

\begin{proof}
	We make use of \Cref{lemma:CdV} by choosing
	\begin{equation*}
		\mathcal{H}:=L^2(\Omega),\quad \mathcal{D}:=H^1(\Omega),\quad F:=F_\alpha^n,\quad q:=q_\alpha^n,
	\end{equation*}
	with $F_\alpha^n$ and $q_\alpha^n$ being as in \eqref{eq:F_alpha} and \eqref{eq:q_alpha}, respectively. Let us now check the validity of the assumptions of \Cref{lemma:CdV}. Being $\{\nu_i:=\lambda_i^\alpha-\lambda_n\}_{i\in\N}$ the sequence of eigenvalues of $q_\alpha^n$, in view of \eqref{eq:convergence_eigenvalues}, we have that assumption (H1) in \Cref{lemma:CdV} holds with
	\begin{equation*}
		\gamma:=\frac{1}{2}\min\left\{\lambda_n-\lambda_{n-1},\lambda_{n+m}-\lambda_n \right\}
	\end{equation*}
	when $n\geq 2$ and 
	\begin{equation*}
		\gamma:=\frac{1}{2}(\lambda_2-\lambda_1)
	\end{equation*}
	when $n=1$, by choosing $\alpha$ sufficiently large (depending on $d$, $\Omega$ and $n$). Concerning (H2), since $\alpha\omega_n(\alpha)\to 0$ as $\alpha\to+\infty$, we can assume $\omega_n(\alpha)\leq 1/2$ for $\alpha$ sufficiently large and we observe that, in view of \eqref{eq:norm_P} $\mathscr{P}_\alpha^n$ is a bijection for $\alpha$ sufficiently large. Hence, for any $v\in F$ there exists a unique $\varphi\in E(\lambda_n)$ such that $v=\mathscr{P}_\alpha^n(\varphi)=\varphi-U_\alpha^\varphi$. Now, by integrating by parts and using the equations satisfied by $\varphi$ and $U_\alpha^\varphi$, we obtain that
	\begin{align*}
		q_\alpha^n(u,v)&=\int_\Omega\nabla u\cdot\nabla v\dx+\alpha\int_{\partial\Omega}uv\ds-\lambda_n\int_\Omega uv\dx \\
		&=\int_\Omega \nabla u\cdot\nabla(\varphi-U_\alpha^\varphi)\dx+\alpha\int_{\partial\Omega}u(\varphi -U_\alpha^\varphi)\ds-\lambda_n\int_{\Omega}u(\varphi-U_\alpha^\varphi)\dx \\
		&=\lambda_n\int_\Omega u U_\alpha^\varphi\dx.
	\end{align*}
	Therefore,
	\begin{equation*}
		\abs{q_\alpha^n(u,v)}\leq \lambda_n\norm{U_\alpha^\varphi}_{L^2(\Omega)}\norm{u}_{L^2(\Omega)}=\frac{\lambda_n\norm{U_\alpha^\varphi}_{L^2(\Omega)}}{\norm{\varphi-U_\alpha^\varphi}_{L^2(\Omega)}}\norm{v}_{L^2(\Omega)}\norm{u}_{L^2(\Omega)},
	\end{equation*}
	if $v=\varphi-U_\alpha^\varphi\neq 0$, but $v=0$ if and only if $\varphi=0$, being $\mathscr{P}_\alpha^n$ a bijection. Hence,
	\begin{equation*}
		\delta=\delta_{n,\alpha}\leq\sup_{ \substack{\varphi\in E(\lambda_n) \\ \norm{\varphi}_{L^2(\Omega)}=1}} \frac{\lambda_n\norm{U_\alpha^\varphi}_{L^2(\Omega)}}{\norm{\varphi-U_\alpha^\varphi}_{L^2(\Omega)}}.
	\end{equation*}
	We now aim at providing an estimate on the behavior of $\delta_{n,\alpha}$. Namely, we have that
	\begin{align*}
		\delta_{n,\alpha}\leq\sup_{ \substack{\varphi\in E(\lambda_n) \\ \norm{\varphi}_{L^2(\Omega)}=1}} \frac{\lambda_n\norm{(\mathscr{P}_\alpha^n-I)(\varphi)}_{L^2(\Omega)}}{\norm{\mathscr{P}_\alpha^n(\varphi)}_{L^2(\Omega)}}&\leq \sup_{ \substack{\varphi\in E(\lambda_n) \\ \norm{\varphi}_{L^2(\Omega)}=1}} \frac{\lambda_n\norm{(\mathscr{P}_\alpha^n-I)(\varphi)}_{L^2(\Omega)}}{\norm{\varphi}_{L^2(\Omega)}-\norm{(\mathscr{P}_\alpha^n-I)(\varphi)}_{L^2(\Omega)}} \\
		&\leq \frac{\lambda_n\norm{\mathscr{P}_\alpha^n-I}_{\mathcal{L}(E(\lambda_n),L^2(\Omega))}}{1-\norm{\mathscr{P}_\alpha^n-I}_{\mathcal{L}(E(\lambda_n),L^2(\Omega))}}=\frac{\lambda_n\sqrt{\omega_n(\alpha)}}{1-\sqrt{\omega_n(\alpha)}},
	\end{align*}
	where, in the last line, we used \eqref{eq:norm_P}. Therefore, by assuming $\alpha$ sufficiently large (depending on $d$, $\Omega$ and $n$), we have that also (H2) is satisfied. At this point, thanks to \Cref{lemma:CdV} we can conclude the proof.
\end{proof}

We are now ready to prove our main results \Cref{thm:main} and \Cref{thm:eigenfunctions}.

\begin{proof}[Proof of \Cref{thm:main}]
	By combining \eqref{eq:app_abs_lemma_th1} with \Cref{cor:eigen_q} we have that
	\begin{align}
		\abs{\lambda_n-\lambda_{n+i-1}^\alpha-\alpha^{-1}\mu_{n,i}}&\leq \abs{\lambda_{n+i-1}^\alpha-\lambda_n-\xi_{n,i}^\alpha}+\abs{\xi_{n,i}^\alpha+\alpha^{-1}\mu_{n,i}} \notag \\
		&\leq C_n(\omega_n(\alpha)+\rho_n(\alpha)), \label{eq:main_1}
	\end{align}
	for some $C_n>0$ and $\alpha$ sufficiently large (depending on $d$, $\Omega$ and $n$). Thanks to \Cref{lemma:L2} and \Cref{lemma:rho}, we have that both $\alpha\,\rho_n(\alpha)$ and $\alpha\,\omega_n(\alpha)$ vanish as $\alpha\to+\infty$, thus concluding the proof of the first part. We point out that we assumed that the eigenbasis of $E(\lambda_n)$ diagonalizes the scalar product \eqref{eq:intr_bilinear}. The remaining estimates can be proved analogously, by exploiting the explicit rate of convergence available, in case of higher regularity, in \Cref{lemma:L2} and \Cref{lemma:rho}.
\end{proof}

\begin{proof}[Proof of \Cref{thm:eigenfunctions}]
	In the whole proof, we denote by $C>0$ a constant depending only on $d$, $\Omega$ and $n$ (the index of the eigenvalue), which may change value from line to line. Analogously, by saying \enquote{for $\alpha$ sufficiently large}, we mean a dependence of $\alpha$ on $d$, $\Omega$ and $n$.
	
	The idea is to start from \eqref{eq:app_abs_lemma_th2} and refine it in various steps. Let us fix $\varphi\in E(\lambda_n)\setminus\{0\}$ and let us denote $h_\alpha^\varphi:=\mathscr{P}_\alpha^n(\varphi)$.
	
	\noindent\textbf{Step 1:} we claim that
	\begin{equation}\label{eq:eigenfunctions_claim1}
		\norm{h_\alpha^\varphi-\Pi_\alpha^n h_\alpha^\varphi}_{\mathcal{H}_\alpha}^2\leq C\norm{\varphi}_{L^2(\Omega)}^2\left(\omega_n(\alpha)+\frac{\sqrt{\omega_n(\alpha)}}{\alpha}\right)\quad\text{for $\alpha$ sufficiently large.}
	\end{equation}
	From \eqref{eq:app_abs_lemma_th2} and \eqref{eq:norm_P} we have that
	\begin{align}
		\norm{h_\alpha^\varphi-\Pi_\alpha^n h_\alpha^\varphi}_{L^2(\Omega)}^2 &\leq C\omega_n(\alpha)\left(\norm{\varphi}_{L^2(\Omega)}^2+\norm{(\mathscr{P}_\alpha^n-I)(\varphi)}_{L^2(\Omega)}^2\right) \notag\\
		&\leq C\omega_n(\alpha)\norm{\varphi}_{L^2(\Omega)}^2.\label{eq:eigenfunctions_1}
	\end{align}
	Since we aim at estimate the $L^2(\Omega)$-norm of the gradient, we consider the equation satisfied by $h_\alpha^\varphi-\Pi_\alpha^n h_\alpha^\varphi$, which one can easily check to be
	\begin{equation*}
		\begin{bvp}
			-\Delta(h_\alpha^\varphi-\Pi_\alpha^nh_\alpha^\varphi)&=\lambda_n (h_\alpha^\varphi-\Pi_\alpha^n h_\alpha^\varphi)+\lambda_n U_\alpha^\varphi+\lambda_n \Pi_\alpha^n h_\alpha^\varphi+\Delta(\Pi_\alpha^n h_\alpha^n), &&\text{in }\Omega, \\
			\partial_{\nnu}(h_\alpha^\varphi-\Pi_\alpha^n h_\alpha^\varphi)+\alpha(h_\alpha^\varphi-\Pi_\alpha^n h_\alpha^\varphi)&=0, &&\text{on }\partial\Omega,
		\end{bvp}
	\end{equation*}
	intended in a weak sense. If we now multiply by $h_\alpha^\varphi-\Pi_\alpha^n h_\alpha^\varphi$ and integrate by parts, we obtain that
	\begin{multline}\label{eq:eigenfunctions_2}
		\norm{h_\alpha^\varphi-\Pi_\alpha^n h_\alpha^\varphi}_{\mathcal{H}_\alpha}^2 \\
		=\int_{\Omega}\Big(\lambda_n(h_\alpha^\varphi-\Pi_\alpha^n h_\alpha^\varphi)^2+\lambda_n U_\alpha^\varphi(h_\alpha^\varphi-\Pi_\alpha^n h_\alpha^\varphi)+(\lambda_n\Pi_\alpha^n h_\alpha+\Delta(\Pi_\alpha^n h_\alpha^\varphi))(h_\alpha^\varphi-\Pi_\alpha^n h_\alpha^\varphi)\Big)
	\end{multline}
	and we now analyze each term of the right-hand side of the previous identity. While we can estimate the first term just by using \eqref{eq:eigenfunctions_1}, for the second term we use Cauchy-Schwarz inequality, the definition on $\omega_n(\alpha)$ and \eqref{eq:eigenfunctions_1}, thus obtaining that
	\begin{equation}\label{eq:eigenfunctions_5}
		\lambda_n\abs{\int_\Omega U_\alpha^\varphi(h_\alpha^\varphi-\Pi_\alpha^n h_\alpha^\varphi)\dx}\leq \lambda_n\norm{U_\alpha^\varphi}_{L^2(\Omega)}\norm{h_\alpha^\varphi-\Pi_\alpha^n h_\alpha^\varphi}_{L^2(\Omega)}\leq C\omega_n(\alpha)\norm{\varphi}_{L^2(\Omega)}^2.
	\end{equation}
	Finally, for what concerns the last term in \eqref{eq:eigenfunctions_2}, we write
	\begin{equation*}
		\Pi_\alpha^n h_\alpha^\varphi=\sum_{i=1}^m a_i^\alpha\varphi_{n+i-1}^\alpha,
	\end{equation*}
	for some $a_i^\alpha\in\R$, where $\{\varphi_{n+i-1}^\alpha\}_{i=1,\dots,m}$ denotes an $L^2(\Omega)$-orthonormal eigenbasis of $\mathcal{E}_\alpha^n$. This way, we have
	\begin{equation*}
		-\Delta(\Pi_\alpha^n h_\alpha^\varphi)=\sum_{i=1}^m a_i^\alpha\lambda_{n+i-1}^\alpha\varphi_{n+i-1}^\alpha,\quad\text{in }\Omega
	\end{equation*}
	and so
	\begin{align}
		\norm{\lambda_n \Pi_\alpha^n h_\alpha^\varphi+\Delta(\Pi_\alpha^n h_\alpha^\varphi)}_{L^2(\Omega)}&=\norm{\sum_{i=1}^m a_i^\alpha(\lambda_n-\lambda_{n+i-1}^\alpha)\varphi_{n+i-1}^\alpha}_{L^2(\Omega)} \notag \\
		&\leq m\norm{\Pi_\alpha^n h_\alpha^\varphi}_{L^2(\Omega)}\max_{i\in\{1,\dots,m\}}|\lambda_n-\lambda_{n+i-1}^\alpha| \notag\\
		&\leq m\norm{h_\alpha^\varphi}_{L^2(\Omega)}\max_{i\in\{1,\dots,m\}}|\lambda_n-\lambda_{n+i-1}^\alpha|\notag \\
		&\leq \frac{3}{2}m\norm{\varphi}_{L^2(\Omega)}\max_{i\in\{1,\dots,m\}}|\lambda_n-\lambda_{n+i-1}^\alpha|,\label{eq:eigenfunctions_3}
	\end{align}
	where in the last two lines we used the fact that $\Pi_\alpha^n$ is a projection and, assuming $\alpha$ sufficiently large, that
	\begin{equation*}
		\norm{h_\alpha^\varphi}_{L^2(\Omega)}\leq \frac{3}{2}\norm{\varphi}_{L^2(\Omega)}.
	\end{equation*}
	 Moreover, by \eqref{eq:main_1} we have that
	\begin{equation*}
			\max_{i\in\{1,\dots,m\}}\abs{\lambda_n-\lambda_{n+i-1}^\alpha}\leq C(\rho_n(\alpha)+\omega_n(\alpha))+\alpha^{-1}\max_{i\in\{1,\dots,m\}}\int_\Omega(\partial_{\nnu}\varphi_{n+i-1})^2\ds,
	\end{equation*}
	and, since $\alpha\,\omega_n(\alpha)$ and $\alpha\,\rho_n(\alpha)$ vanish as $\alpha\to+\infty$, this implies that
	\begin{equation}
			\max_{i\in\{1,\dots,m\}}\abs{\lambda_n-\lambda_{n+i-1}^\alpha}\leq\frac{C}{\alpha},\label{eq:eigenfunctions_4}
	\end{equation}
	for $\alpha$ sufficiently large. Now, combining \eqref{eq:eigenfunctions_3}, \eqref{eq:eigenfunctions_4} and \eqref{eq:eigenfunctions_1} we deduce that
	\begin{align}
		\abs{\int_\Omega(\lambda_n \Pi_\alpha^n h_\alpha^\varphi+\Delta(\Pi_\alpha^n h_\alpha^\varphi))(h_\alpha^\varphi-\Pi_\alpha^n h_\alpha^\varphi)\dx} &\leq \norm{\lambda_n \Pi_\alpha^n h_\alpha^\varphi+\Delta(\Pi_\alpha^n h_\alpha^\varphi)}_{L^2(\Omega)} \norm{h_\alpha^\varphi-\Pi_\alpha^n h_\alpha^\varphi}_{L^2(\Omega)}\notag \\
		&\leq C\frac{\sqrt{\omega_n(\alpha)}}{\alpha}\norm{\varphi}_{L^2(\Omega)}^2 \label{eq:eigenfunctions_6}
	\end{align}
	If we now put together \eqref{eq:eigenfunctions_2} with \eqref{eq:eigenfunctions_1}, \eqref{eq:eigenfunctions_5} and \eqref{eq:eigenfunctions_6}, we obtain \eqref{eq:eigenfunctions_claim1}.
	
	\noindent\textbf{Step 2:} we claim that
	\begin{equation}\label{eq:eigenfunctions_claim2}
		\norm{h_\alpha^\varphi-\Pi_\alpha^n \varphi}_{\mathcal{H}_\alpha}^2\leq C\norm{\varphi}_{L^2(\Omega)}^2\left(\omega_n(\alpha)+\frac{\sqrt{\omega_n(\alpha)}}{\alpha}\right)\quad\text{for $\alpha$ sufficiently large.}
	\end{equation}
	In order to see this, let us first write
	\begin{equation*}
		\Pi_\alpha^n h_\alpha^\varphi=\sum_{i=1}^ma_i^\alpha \varphi_{n+i-1}^\alpha\quad\text{and}\quad \Pi_\alpha^n\varphi=\sum_{i=1}^mb_i^\alpha\varphi_{n+i-1}^\alpha,
	\end{equation*}
	for some $a_i^\alpha,b_i^\alpha\in\R$, so that, in view of the boundary conditions, we have
	\begin{equation*}
		\norm{\Pi_\alpha^n h_\alpha^\varphi-\Pi_\alpha^n\varphi}_{\mathcal{H}_\alpha}^2=\sum_{i=1}^m\lambda_{n+i-1}^\alpha(a_i^\alpha-b_i^\alpha)^2.
	\end{equation*}
	Moreover, by monotonicity of eigenvalues and since $\Pi_\alpha^n$ is a linear projection, we have
	\begin{equation*}
		\norm{\Pi_\alpha^n h_\alpha^\varphi-\Pi_\alpha^n\varphi}_{\mathcal{H}_\alpha}^2\leq \lambda_{n+m-1}^\alpha\norm{\Pi_\alpha^n h_\alpha^\varphi-\Pi_\alpha^n\varphi}_{L^2(\Omega)}^2\leq \lambda_{n+m-1}^\alpha \norm{U_\alpha^\varphi}_{L^2(\Omega)}^2.
	\end{equation*}
	Now, in view of \eqref{eq:convergence_eigenvalues} we deduce that
	\begin{equation*}
		\norm{\Pi_\alpha^n h_\alpha^\varphi-\Pi_\alpha^n \varphi}_{\mathcal{H}_\alpha}^2\leq C\norm{\varphi}_{L^2(\Omega)}^2\omega_n(\alpha)\quad\text{for $\alpha$ sufficiently large},
	\end{equation*}
	which, combined with \eqref{eq:eigenfunctions_claim1} implies \eqref{eq:eigenfunctions_claim2}.
	
	\noindent\textbf{Step 3:} we claim that
	\begin{equation}\label{eq:eigenfunctions_claim3}
		\norm{h_\alpha^\psi-\frac{\Pi_\alpha^n\psi}{\norm{\Pi_\alpha^n\psi}_{L^2(\Omega)}}}_{\mathcal{H}_\alpha}^2 \leq C\left(\omega_n(\alpha)+\frac{\sqrt{\omega_n(\alpha)}}{\alpha}\right)\quad\text{for $\alpha$ sufficiently large},
	\end{equation}
	where $\psi:=\varphi/\norm{\varphi}_{L^2(\Omega)}$.	In order to see this, we first observe that, by writing everything with respect to the eigenbasis of $\mathcal{E}_\alpha^n$ and using monotonicity of eigenvalues as in the previous step, there holds
	\begin{equation}\label{eq:eigenfunctions_7}
		\norm{\Pi_\alpha^n\psi-\frac{\Pi_\alpha^n\psi}{\norm{\Pi_\alpha^n\psi}_{L^2(\Omega)}}}_{\mathcal{H}_\alpha}^2\leq \lambda_{n+m-1}^\alpha\abs{\norm{\Pi_\alpha^n\psi}_{L^2(\Omega)}-1}^2.
	\end{equation}
	Let us now analyze the right-hand side of the previous inequality. By adding and subtracting $U_\alpha^\psi$ and $\Pi_\alpha^n U_\alpha^\psi$ and using the fact that $\Pi_\alpha^n$ is a projection, we have
	\begin{align*}
		\abs{\norm{\Pi_\alpha^n\psi}_{L^2(\Omega)}-1}\leq \norm{\psi-\Pi_\alpha^n\psi}_{L^2(\Omega)} &\leq \norm{h_\alpha^\psi-\Pi_\alpha^n h_\alpha^\psi}_{L^2(\Omega)}+\norm{U_\alpha^\psi}_{L^2(\Omega)}+\norm{\Pi_\alpha^n U_\alpha^\psi}_{L^2(\Omega)} \\
		&\leq \norm{h_\alpha^\psi-\Pi_\alpha^n h_\alpha^\psi}_{L^2(\Omega)}+2\norm{U_\alpha^\psi}_{L^2(\Omega)}.
	\end{align*}
	Furthermore, \eqref{eq:eigenfunctions_claim1} and the definition of $\omega_n(\alpha)$ yield
	\begin{equation*}
		\abs{\norm{\Pi_\alpha^n\psi}_{L^2(\Omega)}-1}\leq C\sqrt{\omega_n(\alpha)}.
	\end{equation*}
	Combining the previous estimate with \eqref{eq:eigenfunctions_7}, \eqref{eq:convergence_eigenvalues} and \eqref{eq:eigenfunctions_claim2} we get \eqref{eq:eigenfunctions_claim3}. We emphasize that we also used the fact that $\varphi\mapsto h_\alpha^\varphi$ and $\varphi\mapsto\Pi_\alpha^n\varphi$ are linear. Finally, since $\alpha\,\omega_n(\alpha)\to 0$ as $\alpha\to +\infty$, we conclude the proof of the first part. If, in addition, $\Omega$ is of class $C^{1,1}$, then we can use \Cref{lemma:L2} in order to estimate \eqref{eq:eigenfunctions_claim3} and obtain the better remainder term $O(\alpha^{-2})$.
	
\end{proof}

In view of the asymptotic estimates about $U_\alpha^\varphi$ derived in \Cref{sec:quantitative}, we can prove our last main result.
\begin{proof}[Proof of \Cref{cor:eigenfunctions}]
	First of all, by explicit computations we have
	\begin{align*}
		&\norm{\varphi-\frac{\Pi_\alpha^n\varphi}{\norm{\Pi_\alpha^n\varphi}_{L^2(\Omega)}}}_{\mathcal{H}_\alpha}^2-\frac{1}{\alpha}\norm{\partial_{\nnu}\varphi}_{L^2(\partial\Omega)}^2= \norm{\varphi-\frac{\Pi_\alpha^n\varphi}{\norm{\Pi_\alpha^n\varphi}_{L^2(\Omega)}}-U_\alpha^\varphi}_{\mathcal{H}_\alpha}^2\\
		&+\norm{U_\alpha^\varphi}_{\mathcal{H}_\alpha}^2 
		+2\left(\varphi-\frac{\Pi_\alpha^n\varphi}{\norm{\Pi_\alpha^n\varphi}_{L^2(\Omega)}}-U_\alpha^\varphi,U_\alpha^\varphi\right)_{\mathcal{H}_\alpha} 
		-\frac{1}{\alpha}\norm{\partial_{\nnu}\varphi}_{L^2(\partial\Omega)}^2.
	\end{align*}
	Now, we recall that \eqref{eq:minimizer_th1} tells us that
	\begin{equation*}
		\norm{U_\alpha^\varphi}_{\mathcal{H}_\alpha}^2 =\int_{\partial\Omega}U_\alpha^\varphi\partial_{\nnu}\varphi\ds.
	\end{equation*}
	Combining these two facts yields
	\begin{align*}
		\norm{\varphi-\frac{\Pi_\alpha^n\varphi}{\norm{\Pi_\alpha^n\varphi}_{L^2(\Omega)}}}_{\mathcal{H}_\alpha}^2-\frac{1}{\alpha}\norm{\partial_{\nnu}\varphi}_{L^2(\partial\Omega)}^2=&
		\norm{\varphi-\frac{\Pi_\alpha^n\varphi}{\norm{\Pi_\alpha^n\varphi}_{L^2(\Omega)}}-U_\alpha^\varphi}_{\mathcal{H}_\alpha}^2 \\
		&+2\left(\varphi-\frac{\Pi_\alpha^n\varphi}{\norm{\Pi_\alpha^n\varphi}_{L^2(\Omega)}}-U_\alpha^\varphi,U_\alpha^\varphi\right)_{\mathcal{H}_\alpha} \\
		&+\frac{1}{\alpha}\int_{\partial\Omega}(\alpha U_\alpha^\varphi-\partial_{\nnu}\varphi)\partial_{\nnu}\varphi\ds.
	\end{align*}
	We now analyze each of the terms above. For the first one, we use \eqref{eq:eigenfunctions_claim3}, which gives
	\begin{equation*}
		\norm{\varphi-\frac{\Pi_\alpha^n\varphi}{\norm{\Pi_\alpha^n\varphi}_{L^2(\Omega)}}-U_\alpha^\varphi}_{\mathcal{H}_\alpha}^2 \leq C\left(\omega_n(\alpha)+\frac{\sqrt{\omega_n(\alpha)}}{\alpha}\right),
	\end{equation*}
	for some $C>0$ and $\alpha$ sufficiently large (depending on $d$, $\Omega$ and $n$), where $\omega_n(\alpha)$ is as in \eqref{eq:def_omega}. For the second one, we use Cauchy-Schwarz inequality, \eqref{eq:eigenfunctions_claim3}, \eqref{eq:bound_th1} (combined with \eqref{eq:minimizer_th1}) and \eqref{eq:equiv_norms_th1}, which gives
	\begin{equation*}
		\abs{\left(\varphi-\frac{\Pi_\alpha^n\varphi}{\norm{\Pi_\alpha^n\varphi}_{L^2(\Omega)}}-U_\alpha^\varphi,U_\alpha^\varphi\right)_{\mathcal{H}_\alpha}}\leq C\sqrt{\frac{\omega_n(\alpha)}{\alpha}+\frac{\sqrt{\omega_n(\alpha)}}{\alpha^2}}
	\end{equation*}
	for some $C>0$ and $\alpha$ sufficiently large (depending on $d$, $\Omega$ and $n$). Lastly, by definition of $\rho_n(\alpha)$ as in \eqref{eq:def_rho}, we have
	\begin{equation*}
		\frac{1}{\alpha}\abs{\int_{\partial\Omega}(\alpha U_\alpha^\varphi-\partial_{\nnu}\varphi)\partial_{\nnu}\varphi\ds}\leq \rho_n(\alpha).
	\end{equation*}
	Summing up all the contributions, we have 
	\begin{equation*}
		\norm{\varphi-\frac{\Pi_\alpha^n\varphi}{\norm{\Pi_\alpha^n\varphi}_{L^2(\Omega)}}}_{\mathcal{H}_\alpha}^2-\frac{1}{\alpha}\norm{\partial_{\nnu}\varphi}_{L^2(\partial\Omega)}^2=O\left(\sqrt{\frac{\omega_n(\alpha)}{\alpha}+\frac{\sqrt{\omega_n(\alpha)}}{\alpha^2}}\right)+O\left(\rho_n(\alpha)\right),
	\end{equation*}
	as $\alpha\to+\infty$, where the remainder terms only depends on $d$, $\Omega$ and $n$. In view of \Cref{lemma:L2_weak}, \Cref{lemma:L2} and \Cref{lemma:rho} we may conclude the proof.
\end{proof}

Finally, combining \Cref{thm:main} with some explicit computations of Dirichlet eigenvalues and eigenfunctions on rectangles, we are able to prove the following.
\begin{proof}[Proof of \Cref{cor:simplicity}]
	By explicit computations, we have that the eigenvalues on the rectangle $\Omega=(0,l)\times(0,L)$ are given by
	\begin{equation*}
		\lambda_{n,m}:=\left(\frac{n\pi}{l}\right)^2+\left(\frac{m\pi}{L}\right)^2,
	\end{equation*}
	and a corresponding $L^2(\Omega)$-orthonormal family of eigenfunctions is given by 
	\begin{equation*}
		\varphi_{n,m}(x,y):=\frac{2}{\sqrt{lL}}\sin\left(\frac{n\pi}{l}x\right)\sin\left(\frac{m\pi}{L}y\right),
	\end{equation*}
	for $n,n\geq 1$. We also denote by $\{\lambda_{n,m}^\alpha\}_{n,m\in\N}$ the corresponding Robin eigenvalues, so that
	\begin{equation*}
		\lambda_{n,m}^\alpha\to \lambda_{n,m}\quad\text{as }\alpha\to+\infty,~\text{for all }n,m\in\N.
	\end{equation*}
	 We emphasize that, for the sake of simplicity, in this proof, we adopt the notation with two indices $(n,m)$ for eigenvalues and eigenfunctions, which does not correspond with the notation with a single index used in the rest of the paper.  If $\lambda_{n,m}$ is simple, we immediately get the thesis, since by continuity $\lambda_{n,m}^\alpha$ is simple for $\alpha$ sufficiently large. Hence, we assume $\lambda_{n,m}$ to be multiple. More precisely, since one can easily check that $\lambda_{n,m}=\lambda_{i,j}$ if and only if either $n=i$ and $m=j$ (same index) or $n\neq i$ and $m\neq j$ (both indices change), we assume the latter, i.e.
	\begin{equation}\label{eq:rect_2}
		\frac{n^2}{l}+\frac{m^2}{L}=\frac{i^2}{l}+\frac{j^2}{L}
	\end{equation}
	 and
	 \begin{equation}\label{eq:rect_3}
	 	n\neq i\quad\text{and}\quad m\neq j.
	 \end{equation}
 	In order to prove the result, we want to make use of \Cref{thm:main}; rectangles are Lipschitz domains, hence let us verify \eqref{eq:hp_eigen}. We claim that
	\begin{equation}\label{eq:rect_1}
		\int_{\partial\Omega}\partial_{\nnu}\varphi_{n,m}\,\partial_{\nnu}\varphi_{i,j}\ds= 0
	\end{equation}
	which follows by direct computations. Indeed, one can split
	\begin{align*}
		\int_{\partial\Omega}\partial_{\nnu}\varphi_{n,m}\,\partial_{\nnu}\varphi_{i,j}\ds&=\int_0^L\partial_x\varphi_{n,m}(0,y)\partial_x\varphi_{i,j}(0,y)\d y +\int_0^L\partial_x\varphi_{n,m}(l,y)\partial_x\varphi_{i,j}(l,y)\d y \\
		&+\int_0^l\partial_y\varphi_{n,m}(x,0)\partial_y\varphi_{i,j}(x,0)\dx+\int_0^l\partial_y\varphi_{n,m}(x,L)\partial_y\varphi_{i,j}(x,L)\dx
	\end{align*}
	and then explicitly compute each term, which yields
	\begin{align*}
		&\int_{\partial\Omega}\partial_{\nnu}\varphi_{n,m}\,\partial_{\nnu}\varphi_{i,j}\ds \\
		&=\left(\frac{2n\pi}{l\sqrt{lL}}\right)^2\left[\int_0^L\sin\left(\frac{m\pi}{L}y\right)\sin\left(\frac{j\pi}{L}y\right)\dy+(-1)^{m+j}\int_0^L\sin\left(\frac{m\pi}{L}y\right)\sin\left(\frac{j\pi}{L}y\right)\dy\right] \\
		&+\left(\frac{2m\pi}{L\sqrt{lL}}\right)^2\left[\int_0^l\sin\left(\frac{n\pi}{l}x\right)\sin\left(\frac{i\pi}{l}x\right)\dx+(-1)^{n+i}\int_0^l\sin\left(\frac{n\pi}{l}x\right)\sin\left(\frac{i\pi}{l}x\right)\dx\right].
	\end{align*}
	Then, since $n\neq i$ and $m\neq j$, one can easily see that each integral vanishes, thus proving \eqref{eq:rect_1}. This means that any rectangle of $\R^2$ satisfies the assumptions of \Cref{thm:main} with this choice of eigenfunctions, which satisfy \eqref{eq:hp_eigen}. At this point, we claim that
	\begin{equation}\label{eq:rect_4}
		\int_{\partial\Omega}(\partial_{\nnu}\varphi_{n,m})^2\ds\neq \int_{\partial\Omega}(\partial_{\nnu}\varphi_{i,j})^2\ds.
	\end{equation}
	In view of the previous computations, one can find that
	\begin{equation*}
		\int_{\partial\Omega}(\partial_{\nnu}\varphi_{n,m})^2\ds=4\pi^2\left(\frac{m^2}{L^3}+\frac{n^2}{l^3}\right).
	\end{equation*}
	So, if by contradiction
	\begin{equation*}
				\int_{\partial\Omega}(\partial_{\nnu}\varphi_{n,m})^2\ds=\int_{\partial\Omega}(\partial_{\nnu}\varphi_{i,j})^2\ds,
	\end{equation*}
	this would imply that
	\begin{equation*}
		\frac{m^2}{L^3}+\frac{n^2}{l^3}=\frac{j^2}{L^3}+\frac{i^2}{l^3}.
	\end{equation*}
	Combining this identity with \eqref{eq:rect_2}, we would obtain that 
	\begin{equation*}
		\frac{m^2}{L^2}\left(\frac{1}{l}-\frac{1}{L}\right)=\frac{j^2}{L^2}\left(\frac{1}{l}-\frac{1}{L}\right)
	\end{equation*}
	which would imply that $m=j$, since $L\neq l$ by assumption. But this contradicts \eqref{eq:rect_3} and so \eqref{eq:rect_4} holds. Now, in view of \Cref{thm:main} we have that $\lambda_{n,m}^\alpha\neq \lambda_{i,j}^\alpha$, for $\alpha$ sufficiently large. This completes the proof.
	
\end{proof}

\appendix

\section{Regularity of solutions}\label{sec:appendix}

In this appendix, we recall some regularity results for the solutions of the PDEs appearing in the present work, i.e. Laplacian eigenfunctions, torsion functions and harmonic extensions. Furthermore, we present a justification of the divergence theorem, which is suitable for our framework. We remark that some of these facts are classical and can be found in many sources. However, we prefer to refer primarily to \cite{BGM2022}, which is a fairly comprehensive treatise and provides the appropriate bibliographical context. We recall that $\Omega\sub\R^d$ is an open, bounded Lipschitz set.
\subsection{Regularity of Dirichlet eigenfunctions}\label{subsec:reg_eigen}
If $\varphi\in H^1_0(\Omega)$ is an eigenfunction of the Dirichlet Laplacian for some eigenvalue $\lambda$, that is
\begin{equation*}
	\begin{bvp}
		-\Delta \varphi &=\lambda\varphi, &&\text{in }\Omega, \\
		\varphi&=0, &&\text{on }\partial\Omega,
	\end{bvp}
\end{equation*}
then by elliptic regularity theory we have that $\varphi$ is analytic in $\Omega$. Moreover, being $\Omega$ a Lipschitz domain, the exterior cone condition is satisfied and so we have that $\varphi\in C^{0,\gamma}(\overline{\Omega})$, for some $\gamma\in(0,1)$, see e.g. \cite[Theorem 8.29]{GT2001} and the remarks below. Moreover, from \cite[Corollary 3.7, (ii)]{BGM2022} we know that $\varphi\in H^{3/2}(\Omega)$, while from \cite[Corollary 3.7, (iv)]{BGM2022} we derive that $\partial_{\nnu}\varphi\in L^2(\partial\Omega)$, i.e. \eqref{ass:Omega} holds.

If we ask further assumptions on the domain, the Dirichlet eigenfunctions consequently gain further regularity. More precisely, if $\Omega$ satisfies a uniform outer ball condition, which is true e.g. if $\Omega$ is convex or of class $C^{1,1}$, then $\varphi\in H^2(\Omega)$ (see \cite[Theorem 1.1]{adolfsson} and \cite[Theorem 2.4.2.5]{grisvard}). Moreover, if $\Omega$ is of class $C^{k+1,1}$, then $\varphi\in H^{k+2}(\Omega)$, for any $k\geq 1$, see \cite[Theorem 2.5.1.1]{grisvard}.

\subsection{Regularity of Robin eigenfunctions and torsion functions}\label{subsec:reg_robin_torsion}

If $\varphi\in H^1(\Omega)$ is a Robin eigenfunction (with parameter $\alpha>0$) for some eigenvalue $\lambda$, that is
\begin{equation*}
	\begin{bvp}
		-\Delta \varphi&=\lambda\varphi, &&\text{in }\Omega, \\
		\partial_{\nnu}\varphi+\alpha\varphi&=0, &&\text{on }\partial\Omega,
	\end{bvp}
\end{equation*}
then it is known that $\varphi$ is analytic in $\Omega$ and that $\varphi\in C^{0,\gamma}(\overline{\Omega})$, for some $\gamma\in(0,1)$, see e.g. \cite[Theorem 3.14]{nittka}. For what concerns Sobolev regularity, from \cite[Corollary 5.7]{BGM2022} we can deduce that $\varphi\in H^{3/2}(\Omega)$ and that $\partial_{\nnu}\varphi\in L^2(\partial\Omega)$. Let us quickly prove this fact. From \cite[Corollary 5.7]{BGM2022} (with $s=3/2$) we have that there exists $\psi\in H^{3/2}(\Omega)$ such that $\Delta\psi\in L^2(\Omega)$ and $\partial_{\nnu}\psi=-\alpha\varphi$. Now let $\widetilde{\varphi}:=\varphi-\psi$. By definition, we know that $\widetilde{\varphi}\in H^{1/2}(\Omega)$, that $-\Delta\widetilde{\varphi}=\lambda\varphi+\Delta\psi\in L^2(\Omega)$ and that $\partial_{\nnu}\widetilde{\varphi}=0$ on $\partial\Omega$; hence, by \cite[Corollary 5.7, (ii)]{BGM2022} (with $s=1/2$) it follows that $\widetilde{\varphi}\in H^{3/2}(\Omega)$, which readily implies $\varphi\in H^{3/2}(\Omega)$. Finally, again \cite[Corollary 5.7]{BGM2022} provides that $\partial_{\nnu}\varphi\in L^2(\partial\Omega)$.

Furthermore, reasoning analogously, one can prove that if $f\in L^2(\partial\Omega)$ and $U_{\partial\Omega,\alpha,f}\in H^1(\Omega)$ is the corresponding torsion function, i.e. the function achieving $T_\alpha(\partial\Omega,f)$ and satisfying
\begin{equation*}
	\begin{bvp}
		-\Delta U_{\partial\Omega,\alpha,f}&=0, &&\text{in }\Omega, \\
		\partial_{\nnu}U_{\partial\Omega,\alpha,f}+\alpha U_{\partial\Omega,\alpha,f}&=f, &&\text{on }\partial\Omega,
	\end{bvp}
\end{equation*}
then $U_{\partial\Omega,\alpha,f}\in H^{3/2}(\Omega)$ and $\partial_{\nnu}U_{\partial\Omega,\alpha,f}\in L^2(\partial\Omega)$. In particular, the boundary conditions can be intended as identities in $L^2(\partial\Omega)$.

\subsection{Harmonic extensions}\label{subsec:harm_ext}
It is known that for any $f\in H^{1/2}(\partial\Omega)$ there exists a unique function $V_f\in H^1(\Omega)$ satisfying
\begin{equation}\label{eq:harm_ext}
	\begin{bvp}
		-\Delta V_f&=0, &&\text{in }\Omega, \\
		V_f&=f, &&\text{on }\partial\Omega.
	\end{bvp}
\end{equation}
For reference, one can see for instance \cite[Theorem 3.6, (i)]{BGM2022} (with $s=1$). Moreover, one has the following estimate:
\begin{equation*}
	\norm{V_f}_{H^1(\Omega)}\leq C\norm{f}_{H^{1/2}(\partial\Omega)},
\end{equation*}
for some $C>0$ depending only on $d$ and $\Omega$. Actually, more in general, from \cite[Theorem 3.6, (i)]{BGM2022} we have that for any $s\in\left[\frac{1}{2},\frac{3}{2}\right]$ and any $f\in H^{s-1/2}(\partial\Omega)$ there exists $V_f\in H^s(\Omega)$ satisfying \eqref{eq:harm_ext} and
\begin{equation*}
	\norm{V_f}_{H^s(\Omega)}\leq C\norm{f}_{H^{s-1/2}(\partial\Omega)},
\end{equation*}
for some $C>0$ depending only on $d$ and $\Omega$. Finally, if $\Omega$ is of class $C^{1,1}$ and $f\in H^{3/2}(\partial\Omega)$, then $V_f\in H^2(\Omega)$, see e.g. \cite[Theorem 2.4.2.5]{grisvard}.

\subsection{Divergence theorem}\label{subsec:dive}
Let us consider the space
\begin{equation*}
	H_{\tu{div}}(\Omega):=\{\vec{G}\in [L^2(\Omega)]^d\colon \dive \vec{G}\in L^2(\Omega)\}.
\end{equation*}
It is known (see e.g. \cite[Chapter 20]{Tartar2007} and \cite[Section III.2]{Galdi2011}) that $H_{\tu{div}}(\Omega)$ is continuously embedded in $H^{-1/2}(\partial\Omega)$ and that, if $\vec{F}\in H_{\tu{div}}(\Omega)$, then the following generalization of the divergence theorem holds:
\begin{equation*}
	\int_\Omega \nabla u\cdot\vec{F}\dx =-\int_\Omega u\dive \vec{F}\dx+_{H^{-1/2}(\partial\Omega)}\langle \vec{F}\cdot\nnu,u \rangle_{H^{1/2}(\partial\Omega)}\quad\text{for all }u\in H^1(\Omega).
\end{equation*}
Moreover, if $\vec{F}$ is sufficiently regular, then the duality product in the identity above can be actually written as an integral.
In particular, let us consider \cite[Corollary 4.5]{BGM2022} with $\epsilon=1/2$ and $\vec{F}\in [H^{1/2}(\Omega)]^d$ such that $\Delta\vec{F}\in [H^{-1}(\Omega)]^d$ and $\dive\vec{F}\in [L^2(\Omega)]^d$. Then by \cite[Corollary 4.5]{BGM2022} there holds
\begin{equation*}
	\int_\Omega \nabla u\cdot\vec{F}\dx =-\int_\Omega u\dive \vec{F}\dx+\int_{\partial\Omega} \vec{F}\cdot\nnu\,u\ds\quad\text{for all }u\in H^1(\Omega).
\end{equation*}
In particular, if $\vec{F}=\nabla v$, for an arbitrary $v\in H^{3/2}(\Omega)$ such that $\Delta v\in L^2(\Omega)$, we have
\begin{equation}\label{eq:A_int_by_parts}
	\int_\Omega \nabla u\cdot\nabla v\dx=-\int_\Omega u\Delta v\dx+\int_{\partial\Omega}u\,\partial_{\nnu}v\ds\quad\text{for all }u\in H^1(\Omega).
\end{equation}

\section*{Acknowledgments}
The author is supported by the project ERC VAREG - \emph{Variational approach to the regularity of the free boundaries} (grant agreement No. 853404). The author also acknowledges the MIUR Excellence Department Project awarded to the Department of Mathematics, University of Pisa, \texttt{CUP\_I57G22000700001} and support from the 2024 INdAM-GNAMPA project no. \texttt{CUP\_E53C23001670001}. The author thanks G. Bevilacqua, G. Tortone and B. Velichkov for useful discussions. The author also warmly thanks the anonymous referees for the careful reading and the numerous valuable suggestions, which helped to improve the quality of the paper.

\bigskip

\noindent\textbf{Data availability.} Data sharing not applicable to this article as no datasets were generated or analyzed during
the current study.
	
\bibliographystyle{aomalpha}

\bibliography{biblio}

\end{document}